\let\vec\mathbf
\newtheorem{thm}{Theorem}[section]
\newtheorem{prop}[thm]{Proposition}
\newtheorem{lem}[thm]{Lemma}
\newtheorem{dfn}[thm]{Definition}
\newtheorem{rmk}[thm]{Remark}
\newcommand{\UU}{\mathcal{U}}
\newcommand{\CC}{\mathcal{C}}
\newcommand{\BB}{\mathcal{B}}
\newcommand{\FF}{\mathscr{F}}
\newcommand{\LL}{\mathcal{L}}
\newcommand{\PP}{\mathcal{P}}
\newcommand{\Ss}{\mathcal{S}}
\newcommand{\rr}{\vec{r}_t}
\newcommand{\T}{\mathbb{T}^3}
\newcommand{\R}{\mathbb{R}}
\newcommand{\E}{\mathbb{E}}
\newcommand{\p}{\mathbb{P}}
\newcommand{\N}{\mathbb{N}}
\newcommand{\B}{\mathscr{B}}
\numberwithin{equation}{section}
\newcommand{\dd}{\mathrm{d}}
\newcommand{\dt}{\,\mathrm{d}t}
\newcommand{\bfr}{\mathbf r}
\newcommand{\CV}{\mathcal{V}_{t,x}}
\title{Dissipative Solutions and Markov selection to the complete stochastic Euler system}
\author{Thamsanqa Castern Moyo}
\address{Maxwell Institute for Mathematical Sciences and Department of Mathematics, Heriot-Watt University, Edinburgh, United Kingdom.}
\email{tcm4@hw.ac.uk}
\begin{document}
\maketitle
\begin{abstract}
    We introduce the concept of \textit{stochastic measure-valued solutions} to the {complete} Euler system describing the motion of a compressible inviscid fluid subject to stochastic forcing, where the nonlinear terms are described by defect measures. These solutions are weak in the probabilistic sense (probability space is not a given `priori', but part of the solution) and analytical sense (derivatives only exists in the sense distributions). In particular, we show that: existence, weak-strong principle;
    a weak measure-valued solution coincides with a strong solution provided the later exists, all hold true provided they satisfy some form of energy balance. Finally, we show the existence of Markov selection to the associated martingale problem.\newline\\
    
    
\end{abstract}
\section{Introduction}
In 1985, Diperna\cite{DiP} proposed the concept of measure-valued solutions to nonlinear systems of partial differential equations (PDEs) admitting the uncontrollable oscillations (in the context of conservation laws). In the framework of fluid dynamics, Diperna and  Majda \cite{DiMa}, introduced the concept of measure-valued solutions to the incompressible Euler system. Later on, the concept of measure-valued solutions was extended to compressible fluid dynamics by Neustupa \cite{Neu}, Kr\"oner and Zajackowski \cite{KrZa}, and revisited recently by Breit et al \cite{HoFeBr}, Feireisl et. al. in \cite{FEJB,FeGwS} and references therein, where they developed the concept of \textit{dissipative measure-valued solutions}. In this paper, we consider the \textit {Complete Euler System} describing the motion of a temperature dependent compressible inviscid fluid flow driven by stochastic forcing. The fluid model is described by means of three basic state variables: the mass density $\varrho=\varrho(t,x)$, the velocity field $\vec u = \vec (t,x)$, and the (absolute) temperature $\vartheta=\vartheta(t,x)$, where $t$ is the time, $x$ is the space variable (Eulerian coordinate system). The time evolution of the fluid flow is governed by a system of partial differential equations (mathematical formulations of the physical principles) given by 
\begin{eqnarray}\label{Euler}
\dd \varrho + \mathrm{div}_{x}(\varrho \vec u)\, \dd t &=&0 \quad \text{in}\, Q,\nonumber\\
\dd(\varrho \vec u)+ \mathrm{div}_x(\varrho \vec u \otimes \vec u)\, \dd t + \nabla_x p(\varrho,\vartheta)\,\dd t&=&\varrho \phi \,\dd W\quad \text{in}\, Q,\\
\dd \left(\frac{1}{2}\varrho|\vec u|^2+ \varrho e(\varrho,\vartheta)\right) + \mathrm{div}_x\left[\left(\frac{1}{2}\varrho|\vec u|^2+ \varrho e(\varrho,\vartheta)+p(\varrho,\vartheta)\right)\vec u\right]\dd t &=&\frac{1}{2}\|\sqrt{\varrho}\phi\|_{L_2}^2 \,\dd t+ \varrho\phi\cdot \vec u\dd W,\nonumber
\end{eqnarray}
describing: the balance of mass, momentum, total energy, respectively. Here, $p(\varrho,\vartheta)$ denotes pressure, the driving force is represented by a cylindrical Wiener process W, and $\phi$ is a Hilbert-Schmidt operator, see Section \ref{s:stoc} for details. For completeness, the system (\ref{Euler}) is supplemented by a set of constitutive relations characterising the physical principles of a compressible inviscid fluid. In particular, we assume that the pressure $p(\varrho,\vartheta)$ and the internal energy $e =e(\varrho,\vartheta)$ satisfy the caloric equation of state
\begin{equation}\label{caloric}
    p=(\gamma-1)\varrho e, 
\end{equation}
where $\gamma>1$ is the adiabatic constant. In addition, we suppose that the absolute temperature $\vartheta$ satisfies the Boyle-Mariotte thermal equation of state:
\begin{equation}\label{boyle}
    p =\varrho\vartheta \quad \mathrm{yielding}\quad e= c_v\vartheta, c_v =\frac{1}{\gamma-1}.
\end{equation}
Finally, we suppose that the pressure $p=p(\varrho, \vartheta)$, the specific internal energy $e =e(\varrho,\vartheta)$, and the specific entropy  $ s =  s(\varrho,\vartheta)$ are interrelated through Gibbs' relation

\begin{equation}\label{gibbs}
   \vartheta D s (\varrho,\vartheta) = D e(\varrho,\vartheta)+ p(\varrho,\vartheta)D\left(\frac{1}{\varrho}\right).
\end{equation}
If $p,e,s$ satisfy (\ref{gibbs}), in context of any \textit{smooth} solutions to (\ref{Euler}), the Second law of thermodynamics is enforced through the entropy balance equation

\begin{equation}\label{entrB}
  \dd (\varrho  s (\varrho,\vartheta))+\mathrm{div}_x(\varrho  s(\varrho,\vartheta)\vec u)\, \dd t =0, 
\end{equation}
where $s(\varrho,\vartheta)$ denotes the (specific) entropy and is of the form
\begin{equation}\label{entropy}
    s(\varrho,\vartheta)=\log(\vartheta^{c_v})-\log(\varrho).
\end{equation}
For weak solutions, the equality in (\ref{entrB}) no longer holds, the entropy balance is given as an inequality, for more details see \cite{SBGD}. To circumvent problems from physical boundaries, we impose periodic boundary conditions, the physical domain $\T$ can be identified with a flat torus
\[
\T =([0,1]|_{0,1})^3.
\]
Finally, the initial state of fluid emanates from random initial data
\begin{equation}\label{data}
    \varrho(0,\cdot)=\varrho_0, \quad \vartheta(0,\cdot) =\vartheta_0, \quad \vec {u}(0,\cdot)=\vec u_0.
\end{equation}
For physical relevant solutions, the problem is augmented by the total energy balance
\begin{equation}\label{eq:tenergy}
    \dd \int_{\T}\left [\frac{1}{2}\varrho|\vec u|^2 +\varrho e\right]\, \dd x= \int_{Q}\varrho\phi \cdot\vec u\,\dd W+ \int_{0}^{T}\frac{1}{2}\|\sqrt{\varrho}\phi\|_{L_2}^{2}\,\dd t.
\end{equation}
{The strong solutions of the system (\ref{Euler}) satisfy (\ref{eq:tenergy}), but in weak solutions it has to be added in the definition.}\newline

The deterministic counterpart of the Cauchy problem (\ref{Euler}) has been extensively studied, and it is well-known that its classical solutions exist only for a finite time after which singularities  may develop no matter how smooth or small the initial data are. Consequently, the concept
of weak (distributional) solutions is sought to study global-in-time
behavior of the system (\ref{Euler}).  Furthermore, the weak solutions may not be uniquely determined by their initial data. Hence, an admissibility criteria condition must be imposed to select physically relevant solutions. In addition, more recently, the results of DeLellis, Sz\'ekelyhidi and their collaborators \cite{ChDeO,chi,DeSz} show the existence and non-uniqueness of weak solutions to the isentropic Euler system via the method of convex integration. In particular, non-uniqueness was established for weak solutions satisfying the standard entropy admissibilty criteria, and to be precise, the deterministic compressible Euler system is ill-posed,
see Buckmaster et al \cite{Buc} and references therein  for more details. The existence of weak solutions to Euler system were further extended in \cite{ChFrOk} to incorporate a compressible heat conducting gas. Finally, results established in \cite{HoFeBr} show that it is possible to select a system of solutions satisfying the classical semiflow property for the complete Euler system.\newline 

The study of stochastically perturbed equations of motion is motivated in two folds: 
(i) modelling perturbations (numerical, empirical, and physical uncertainties) and thermodynamics fluctuations present in fluid flows; in particular, turbulence, (ii) to circumvent the issue of  deterministically ill-posed problems, researchers adopted the use of stochastic perturbation with hope it will provide a regularising effect  to the underlying systems. And indeed, recently, the results by Flandoli and Luo \cite{FlLu} showed that a noise of transport type improves the vorticity blow-up control in the Navier-Stokes. In the stochastic context, existence of global-in-time weak solutions for (\ref{Euler}) were shown in \cite{ChFeFl} using convex integration, and they identified  a large class of initial data for which the \textit{complete} Euler system is ill-posed; that is, there exist infinitely many global in time  solutions.\newline

Our goal in the present paper is to show the existence of \textit{dissipative} measure-valued solutions to a stochastically driven Euler system (\ref{Euler}), properties of solutions, that is, weak-strong principle and Markov selection. In particular, measure-valued solutions satisfy the admissibility criterion; they conserve the total energy and satisfy an appropriate form version of the entropy (entropy admissibility criterion), and they exist global-in-time for any finite initial data. The study of measure-valued solutions is motivated by the following arguments: Inspired by results in Brenier et al \cite{Bren} in the incompressible Euler system, we see measure-valued solutions as  possibly the largest class in which the family of smooth (classical) solutions is stable. Specifically, the weak (measure-valued)-strong uniqueness principle holds, and solutions emanating from numerical schemes can be shown to converge to a measure-valued solutions while the convergence to weak solutions is either not known or computational expensive, see Fjordholm et al \cite{Fjh} and references therein for more details. The concept of measure-valued solutions to fluid model systems driven by stochastic forcing is fairly a new subject area of research. To the best of our knowledge, the study of  stochastic measure-valued solutions to the complete Euler system governing the motion of an inviscid, temperature dependent, and compressible fluid subject to stochastic forcing is still an open question. Hence, this is a first attempt to characterise the concept of measure-valued solutions to the full stochastic Euler system. However, it is worth mentioning that, the existence of measure-valued solutions for stochastic incompressible Euler has been established in \cite{BrMo,MZz,kim}.\newline

In this present work, we prove the existence of measure-valued martingale solutions to the \textit{complete} Euler system (\ref{Euler}) following the strategy in \cite{ BrMo,MaUjUt,kim}. These solutions are weak, in the
analytical sense (derivatives only exists in the sense of distributions) and in the probabilistic sense
(the probability space is not a given priori, but an integral part of the solution). We start with an approximate system with high order diffusion, see Section \ref{Bsec}, and show existence of solutions to the original problem in the limit via stochastic compactness arguments based on Jakubowski's variant of the Skorokhod representation theorem \cite{Jaku}. The latter is needed due to the complicated \textit{path space} which arises because of the presence of measures describing the oscillations and concentrations in the nonlinearities of the Euler system. Moreover, we deduce the  relative entropy inequality (see Section \ref{WK}); a tool that allows us to establish the weak(measure-valued)-strong uniqueness principle. Although we do not expect solutions to be unique, there is some hope to select solutions which are in a sense continuous with respect to the initial data. This is the Markov property; the memoryless of the stochastic process, the probability law of the future only depends on the current state of the process, but it is independent from the past, see the monograph by Stroock and Varadhan \cite{STVAR} for a thorough exposition. Our work shows the stochastic analog results of \cite{HoFeBr}, that is, the existence of Markov selection to the associated martingale problem following the presentations in \cite{DbEfMh,FlROm,GoRo}. At first sight, the overall proof outline is rather similar, however, we encountered several challenges in this paper. A major challenge originates in the use of defect measures. The defect measures are an equivalence class in time and not stochastic processes in the classical sense. Therefore, it is not clear as to how one applies the Markov selection. To circumvent this problem, we introduce auxiliary continuous stochastic variables $[\Ss,\vec R]$ such that 
\begin{equation*}
   \Ss = \int_{0}^{\cdot}S\,\dd s, \quad \vec R = \int_{0}^{\cdot}(\mathcal{R}_{\text{conv}},\mathcal{R}_{\text{press}},\mathcal{V}_{t,x})\,\dd s,
   \vspace{-0.265in}
\end{equation*}
 and this allows us to show Markov selection for $[\varrho, \vec m, \Ss, \vec R]$, see Section \ref{markov}. Here, $\Ss$ and $\vec R$ are the defect measures related to entropy balance and momentum equation, respectively. Note, such an approach is reminiscent of \cite{DbEfMh}, where a similar idea was used for the velocity field. It is important to note that, different to \cite{DbEfMh,FlROm,GoRo}, we obtain a strong Markov selection. This is due to the energy equality which is a feature of the system (\ref{Euler}) and is not known to hold for the problems studied in \cite{DbEfMh,FlROm,GoRo}. However, a first result on strong Markov selection has been obtained recently by Hofmanov\'a-Zhu-Zhu \cite{MZz}. In \cite{MZz} they study the incompressible stochastic Euler equations and obtain the energy equality by introducing a defect measure for the energy which is included in the Markov selection.\newline
 
 The rest of the paper is organised as follows: we start off with mathematical framework and main results in Section \ref{Asec}. In section \ref{Bapp}, we show existence of martingale solutions for the approximate system. In Section \ref{mvsproof}, we prove the existence of measure-valued martingale solutions using stochastic compactness arguments. Finally, Section \ref{WK} is dedicated to showing the weak (measure-valued)-strong uniqueness principle, while Section \ref{markov} is dedicated to the Markov selection.

\section{Mathematical framework and main results}\label{Asec}
In this section we present the the probability framework for Markov selection and stochastic framework. In particular we consider tools required for analysis in subsequent sections and state the main results of the paper. Throughout the paper, we assume that $C$ denotes various generic constant. 

\subsection{Probability framework}
Let $X$ be a topological space. We denote by $\B(X)$ the $\sigma$-algebra of Borel subsets of $X$. Let $\PP$ be a Borel measure on $X$, the symbol $\overline{\B(X)}$ denotes the $\sigma$-algebra of all Borel subsets of $X$ augmented by all zero measure sets. Let Prob$[X]$ denote the set of all Borel probability measures on a topological space $X$. Furthermore, let $([0,1],\overline{\B[0,1]}, \LL)$ denote the standard probability space, where $\LL$ is the Lebesgue measure.
\subsubsection{Trajectory/Path spaces}\label{path}
In regards to the notion of solutions in this paper, let $(X,d_{X})$ be a Polish space. For $t>0$ we introduce the path spaces
\[
\Omega_{X}^{[0,t]} =C([0,t];X),\qquad\Omega_{X}^{[t,\infty)} =C([t,\infty);X),\qquad\Omega_{X}^{[0,\infty)} =C([0,\infty);X),
\]
where the path spaces are Polish as long as $X$ is Polish, and we denote by $\B_t=\B(\Omega_{X}^{[0,t]})$ the Borel $\sigma$-algebra. Then, for $\omega \in \Omega_{X}^{[0,t]}$ we define a time shift operator 
\[
\Phi_{\tau}:\Omega_{X}^{[t,\infty)}\to \Omega_{X}^{[t+\tau,\infty)},\qquad \Phi_{\tau}[\omega]_s=\omega_{t-\tau}, s\geq t+\tau,
\]
where $\Phi_{\tau}$ is an isometry mapping from $\Phi_{\tau}:\Omega_{X}^{[t,\infty)}$ to $ \Omega_{X}^{[t+\tau,\infty)}$. For a Borel measure $\nu$ on $\Omega_{X}^{[t,\infty)}$, the time shift $\Phi_{-\tau}[\nu]$ is a Borel measure on the space $\Omega_{X}^{[t-\nu,\infty)}$ given by

\[
\Phi_{-\tau}[\nu](B)=\nu(\Phi_{\tau}(B)), \qquad B\in \B (\Omega_{X}^{[t-\tau,\infty)}).
\]
In the following parts, we recall important results of Stroock and Varadhan
\cite{STVAR}. Firstly, from Theorem 1.1.6 in \cite{STVAR} we obtain disintegration results, that is, existence of regular conditional probability law. This implies that a regular
\begin{thm}[Disintegration]\label{Dis}
Let $X$ be a polish space. Given $\PP\in$ Prob$[\Omega_{X}^{[0,\infty)}]$, let $T>0$ be a finite $\B_t$-stopping time. Then there exists a unique family of probability measures
\[
\PP|_{\B_{T}}^{\tilde{\omega}}\in \, \text{Prob}[\Omega_{X}^{[T,\infty)}]\,\, \text{for}\,\, \PP\text{-a.a.}\tilde{\omega}
\]
 such that the mapping 
 \[
 \Omega_{X}^{[0,\infty)} \ni \tilde{\omega}\mapsto \PP|_{\B_{T}}^{\tilde{\omega}}\in \, \text{Prob}[\Omega_{X}^{[T,\infty)}]
 \]
 is $\B_T$-measurable and the following properties hold
 \begin{itemize}
     \item [(a)]  For $\omega\in \Omega_{X}^{[T,\infty)}$ we have $\PP|_{\B_T}^{\tilde{\omega}}$-a.s.
     \[
     \omega(T)=\tilde{\omega}(T);
     \]
     \item[(b)] For any Borel set $A\subset\Omega_{X}^{[0,T]}$ and any Borel set $B\subset\Omega_{X}^{[T,\infty)}$,
     \[
     \PP(\omega|_{[0,T]}\in A,\omega|_{[T,\infty)}\in B)= \int_{\tilde{\omega}|_{[0,T]}\in A}\PP|_{\B_T}^{\tilde{\omega}}(B)\,\dd \PP(\tilde{\omega}).
     \]
 \end{itemize}
\end{thm}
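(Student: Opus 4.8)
The plan is to obtain the statement from the general existence theory of regular conditional probabilities on Polish spaces, transported to the path space through the natural restriction maps. Write $r_{[0,T]}\colon\Omega_X^{[0,\infty)}\to\Omega_X^{[0,T]}$ and $r_{[T,\infty)}\colon\Omega_X^{[0,\infty)}\to\Omega_X^{[T,\infty)}$ for the restriction operators; both are continuous, hence Borel, and a path $\omega$ is uniquely reconstructed from the pair $(r_{[0,T]}\omega,r_{[T,\infty)}\omega)$ subject only to the matching condition $r_{[0,T]}\omega(T)=r_{[T,\infty)}\omega(T)$. I identify $\B_T$ with the sub-$\sigma$-algebra $r_{[0,T]}^{-1}\big(\B(\Omega_X^{[0,T]})\big)$ of $\B(\Omega_X^{[0,\infty)})$; for a genuine stopping time one replaces this by the stopped $\sigma$-algebra and uses continuity of paths to see that $\omega\mapsto\omega(T(\omega))$ and the stopped/shifted trajectories depend measurably on $\omega$.

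First I would invoke the existence of a regular conditional probability distribution: since $\Omega_X^{[0,\infty)}$ is Polish whenever $X$ is, there is a kernel $\tilde\omega\mapsto Q^{\tilde\omega}\in\mathrm{Prob}[\Omega_X^{[0,\infty)}]$, $\B_T$-measurable in $\tilde\omega$, with $Q^{\tilde\omega}(A)=\E_{\PP}[\mathbf 1_A\mid\B_T](\tilde\omega)$ for every $A\in\B(\Omega_X^{[0,\infty)})$ and $\PP$-a.a.\ $\tilde\omega$. I then define $\PP|_{\B_T}^{\tilde\omega}$ to be the $r_{[T,\infty)}$-pushforward of $Q^{\tilde\omega}$ onto $\Omega_X^{[T,\infty)}$; measurability in $\tilde\omega$ survives pushforward by the fixed continuous map $r_{[T,\infty)}$, and $\text{Prob}[\Omega_X^{[T,\infty)}]$ is Polish, so the map is $\B_T$-measurable as required.

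For property (a), the evaluation $\omega\mapsto\omega(T)$ is $\B_T$-measurable, so for any bounded continuous $g$ on $X$ one has $\int g(\omega(T))\,Q^{\tilde\omega}(\dd\omega)=\E_{\PP}[g(\omega(T))\mid\B_T](\tilde\omega)=g(\tilde\omega(T))$ for $\PP$-a.a.\ $\tilde\omega$; letting $g$ run through a countable point-separating family (available since $X$ is Polish) forces $\omega(T)=\tilde\omega(T)$ for $Q^{\tilde\omega}$-a.a.\ $\omega$, i.e.\ $\PP|_{\B_T}^{\tilde\omega}$-a.s. For property (b), given Borel $A\subset\Omega_X^{[0,T]}$ and $B\subset\Omega_X^{[T,\infty)}$ the set $r_{[0,T]}^{-1}(A)$ lies in $\B_T$ while $r_{[T,\infty)}^{-1}(B)\in\B(\Omega_X^{[0,\infty)})$, so the defining identity of the r.c.p.d.\ gives
\[
\PP\big(r_{[0,T]}^{-1}(A)\cap r_{[T,\infty)}^{-1}(B)\big)=\int_{r_{[0,T]}^{-1}(A)}Q^{\tilde\omega}\big(r_{[T,\infty)}^{-1}(B)\big)\,\dd\PP(\tilde\omega)=\int_{\tilde\omega|_{[0,T]}\in A}\PP|_{\B_T}^{\tilde\omega}(B)\,\dd\PP(\tilde\omega),
\]
which is exactly the asserted formula. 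Uniqueness follows because if $\mu^{\tilde\omega}$ and $\nu^{\tilde\omega}$ both satisfy (b), then for each $B$ in a countable generating $\pi$-system of $\B(\Omega_X^{[T,\infty)})$ the maps $\tilde\omega\mapsto\mu^{\tilde\omega}(B)$ and $\tilde\omega\mapsto\nu^{\tilde\omega}(B)$ are both versions of $\E_{\PP}[\mathbf 1_{r_{[T,\infty)}^{-1}(B)}\mid\B_T]$, hence agree $\PP$-a.s.; intersecting the countably many null sets and using that a finite Borel measure on a Polish space is determined on such a $\pi$-system yields $\mu^{\tilde\omega}=\nu^{\tilde\omega}$ for $\PP$-a.a.\ $\tilde\omega$.

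The only genuinely nontrivial ingredient is the existence of the regular conditional probability distribution, which is exactly where the Polish structure of the path space (inherited from that of $X$) is indispensable; the remainder is bookkeeping with the restriction maps. In the stopping-time formulation the extra point requiring care is the measurability of the stopped path $\omega\mapsto\omega(\cdot\wedge T)$ and of the shifted future trajectory, both of which rest on continuity of the trajectories in $C([0,\infty);X)$ together with the right-continuity of the natural filtration on the path space.
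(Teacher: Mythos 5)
Your argument is correct and rests on exactly the same ingredient the paper uses: the theorem is recalled from Stroock--Varadhan (Theorem 1.1.6) without proof, and that result is precisely the existence of a regular conditional probability distribution on a Polish path space, transported through the restriction map and checked against a countable generating $\pi$-system, as you do. The only point deserving the care you already flag is that for a genuine stopping time $T$ the map $r_{[T,\infty)}$ is $\tilde\omega$-dependent; since $T$ is $\B_T$-measurable it is $Q^{\tilde\omega}$-a.s.\ constant, so the pushforward is well defined for $\PP$-a.a.\ $\tilde\omega$.
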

Note, a conditional probability corresponds to disintegration of probability measure with respect to a $\sigma$-field. Accordingly, \textit{reconstruction} can be understood as the opposite process of disintegration, that is, some sort of ``gluing together" procedure. Based on \cite{STVAR} Lemma 6.1.1 and Theorem 6.1.2 we have the following results on reconstruction.

\begin{thm}[Reconstruction]\label{Rec}
Let $X$ be a Polish space. Let $\PP\in$ Prob$[\Omega_{X}^{[0,\infty)}]$ and $T$ be a finite $\B_t$-stopping time. Suppose that $Q_{\omega}$ is a family of probability measures, such that
\[
\Omega_{X}^{[0,\infty)} \ni \omega \mapsto Q_{\omega} \in\text{ Prob$[\Omega_{X}^{[T,\infty)}]$},
\]
is $\B_T$-measurable. Then there exists a unique probability measure $\PP\otimes_{T}Q$ such that :
\begin{itemize}
    \item [(a)]For any Borel set $A \in \Omega_{X}^{[0,T]} $ we have 
    \[
    (\PP\otimes_{T}Q)(A)=\PP(A);
    \]
    \item[(b)] For $\tilde{\omega}\in \Omega$ we have $\PP$-a.s.
    \[
    (\PP\otimes_{T}Q)|_{\B_T}^{\tilde\omega}=Q_{\tilde \omega}
    \]
\end{itemize}
\end{thm}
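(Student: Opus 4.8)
The plan is to construct $\PP\otimes_T Q$ explicitly by a ``gluing'' procedure --- draw a trajectory from $\PP$, stop it at the random time $T$, and splice on a continuation sampled from $Q$ --- and then to read off properties (a), (b) and uniqueness from the Disintegration Theorem~\ref{Dis}.

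First I would introduce the concatenation map. For $\tilde\omega\in\Omega_{X}^{[0,\infty)}$ and $\eta\in\Omega_{X}^{[T(\tilde\omega),\infty)}$ with $\eta(T(\tilde\omega))=\tilde\omega(T(\tilde\omega))$, let $\tilde\omega\oplus_T\eta\in\Omega_{X}^{[0,\infty)}$ denote the path that equals $\tilde\omega$ on $[0,T(\tilde\omega)]$ and $\eta$ on $[T(\tilde\omega),\infty)$; the matching condition at $T(\tilde\omega)$ makes it continuous. Here we use that $Q_{\tilde\omega}$ charges only continuations issued from $\tilde\omega(T(\tilde\omega))$, in analogy with property (a) of Theorem~\ref{Dis}, so that the gluing is $Q_{\tilde\omega}$-a.s.\ defined. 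The one genuinely technical point --- and the step I expect to be the main obstacle --- is to show that
\[
\tilde\omega\longmapsto Q_{\tilde\omega}\big(\{\eta:\ \tilde\omega\oplus_T\eta\in B\}\big),\qquad B\in\B(\Omega_{X}^{[0,\infty)}),
\]
is $\B_T$-measurable. This is where the stopping-time structure is genuinely used: since $T$ is a $\B_t$-stopping time one has $T(\tilde\omega\oplus_T\eta)=T(\tilde\omega)$, so the splice occurs at a time measurable with respect to the information up to $T$, the restriction of $\tilde\omega\oplus_T\eta$ to $[0,T]$ depends on $\tilde\omega$ alone, and $\tilde\omega\mapsto Q_{\tilde\omega}$ is $\B_T$-measurable by hypothesis; one checks the claim first for $B$ a finite-dimensional cylinder set and extends by a monotone-class argument. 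Granting this, I would set
\[
(\PP\otimes_T Q)(B):=\int_{\Omega_{X}^{[0,\infty)}} Q_{\tilde\omega}\big(\{\eta:\ \tilde\omega\oplus_T\eta\in B\}\big)\,\dd\PP(\tilde\omega),
\]
which is a Borel probability measure on $\Omega_{X}^{[0,\infty)}$.

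It then remains to verify (a) and (b). For (a): if $A\subset\Omega_{X}^{[0,T]}$ is Borel, then $(\tilde\omega\oplus_T\eta)|_{[0,T]}=\tilde\omega|_{[0,T]}$ for every admissible $\eta$, so the inner probability equals $1$ when $\tilde\omega|_{[0,T]}\in A$ and $0$ otherwise, and integrating against $\PP$ gives $(\PP\otimes_T Q)(\omega|_{[0,T]}\in A)=\PP(\omega|_{[0,T]}\in A)$. For (b): for Borel $A\subset\Omega_{X}^{[0,T]}$ and $B\subset\Omega_{X}^{[T,\infty)}$, unwinding the definition yields
\[
(\PP\otimes_T Q)\big(\omega|_{[0,T]}\in A,\ \omega|_{[T,\infty)}\in B\big)=\int_{\tilde\omega|_{[0,T]}\in A} Q_{\tilde\omega}(B)\,\dd\PP(\tilde\omega)=\int_{\tilde\omega|_{[0,T]}\in A} Q_{\tilde\omega}(B)\,\dd(\PP\otimes_T Q)(\tilde\omega),
\]
the last equality using (a); comparing with property (b) of Theorem~\ref{Dis} and invoking the uniqueness of the regular conditional probability identifies $(\PP\otimes_T Q)|_{\B_T}^{\tilde\omega}$ with $Q_{\tilde\omega}$ for a.a.\ $\tilde\omega$.

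Finally, for uniqueness, the sets $\{\omega|_{[0,T]}\in A,\ \omega|_{[T,\infty)}\in B\}$ form a $\pi$-system generating $\B(\Omega_{X}^{[0,\infty)})$, and for any $\PP'$ satisfying (a) and (b) the value of $\PP'$ on such a set is forced --- by disintegrating $\PP'$ along $\B_T$ via Theorem~\ref{Dis} --- to equal the right-hand side of the last display; hence $\PP'=\PP\otimes_T Q$.
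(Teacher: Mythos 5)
The paper does not prove this statement: it is quoted verbatim from Stroock--Varadhan (Lemma 6.1.1 and Theorem 6.1.2 of \cite{STVAR}), so there is no in-paper argument to compare against. Your splicing construction is exactly the classical proof from that reference, and it is correct: the definition of $\PP\otimes_T Q$ by integrating the pushforward of $Q_{\tilde\omega}$ under the concatenation map, the use of the stopping-time property (Galmarino-type identity $T(\tilde\omega\oplus_T\eta)=T(\tilde\omega)$) to get $\B_T$-measurability of the inner integrand via cylinder sets and a monotone-class argument, the verification of (a) and (b) by unwinding the definition against Theorem~\ref{Dis}, and the $\pi$-system uniqueness argument are all sound. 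One point worth making explicit: you correctly observe that the gluing is only $Q_{\tilde\omega}$-a.s.\ well defined if $Q_{\tilde\omega}$ is concentrated on continuations $\eta$ with $\eta(T(\tilde\omega))=\tilde\omega(T(\tilde\omega))$. This compatibility hypothesis is present in the original Stroock--Varadhan statement but is omitted from the statement as reproduced in the paper; without it the concatenated paths need not be continuous and the construction (and indeed the theorem) fails, so your decision to supply it is the right one rather than a gap in your argument.
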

\subsubsection{Markov processes}
Following the abstract framework on Markov processes along lines of \cite{DbEfMh} and references therein we have: Assuming $(X, d_{X})$ and $(F,d_F)$ are two Polish spaces, let the embedding $F\hookrightarrow X$ be continuous and dense. Moreover, let $Y$ be a Borel subset of $F$. Since $(Y,d_F)$ is not necessarily a complete \textit{space}, we assume that the embedding $Y\hookrightarrow X$ is not dense.
A family of probability measures $\{\PP_y\}_{y\in Y}$ on $\Omega_{X}^{[0,\infty)}$ is called Markovian if we have for $y\in Y$ that 
\[
\PP_{\omega(\tau)}=\Phi_{-\tau}\PP_{y}|_{\B_T}^{\omega}-\text{a.a.}\,\omega \in \Omega_{X}^{[0,\infty)}\,\text{and all}\, \tau \geq 0.
\]

Next we define probability measures with support only on certain subset of a Polish space.
\begin{dfn}
Let $Y$ be a Borel subset of $F$ and let $U\in$ Prob$[\Omega_{X}^{[0,\infty)}]$. A family of probability measures $U$ is concentrated  on the paths with values in $Y$ if there is some $A \in \B(\Omega_{X}^{[0,\infty)})$ such that $\PP(A)=1$ and $A\subset\{\omega\in \Omega_{X}^{[0,\infty)}:\omega(\tau)\in Y\forall \tau \geq 0\}$. We write $U\in$ $\mathrm{Prob}_{Y}[\Omega_{X}^{[0,\infty)}]$.
\end{dfn}
We generalise the classical Markov property (\textit{to a situation where it only holds for a.e time-point}) as follows:
\begin{dfn}[Almost Sure Markov property]
Let $y\mapsto \PP_y$ be a measurable map defined on a measurable subset $Y\subset F$ with values in $\mathrm{Prob}_{Y}[\Omega_{X}^{[0,\infty)}]$. The family $\{\PP_y\}_{y\in Y}$ has the almost sure Markov property if for each $y\in Y$ there is a set $\mathfrak{Z}\subset(0,\infty)$ with zero Lebesgue measure such that
\[
\PP_{\omega(\tau)}=\Phi_{-\tau}\PP_{y}|_{\B_T}^{\omega}\quad\text{for}\,\,\PP_{y}-\text{a.a.} \omega \in \Omega_{X}^{[0,\infty)}
\]
and all $\tau \not\in \mathfrak{Z} $
\end{dfn}
Finally, based on the link between disintegration and reconstruction as observed by \cite{Krl}, we have the following definition.
\begin{dfn}[Almost sure pre-Markov family]\label{almostMark}
Let $Y$ be a Borel subset of $F$. Let $\mathcal{C}:Y\to \mathrm{Comp}(\mathrm{Prob}[\Omega_{X}^{[0,\infty)}]\cap\mathrm{Prob}_{Y}[\Omega_{X}^{[0,\infty)}])$ be a measureable map, where Comp($\cdot$) denotes the family of all compact subsets. The family $\{\mathcal{C}(y)\}_{y\in Y}$ is almost surely pre-Markov if for each $y\in Y$ and $U\in \mathcal{C}(y)$ there is a set $\mathfrak{Z}\subset(0,\infty)$ with zero Lebesgue measure such that the following holds for all $\tau \in \mathfrak{Z}$:
\begin{itemize}
    \item [(a)] The disintegration property holds, that is, we have
    \[
    \Phi_{-\tau}\PP|_{\B_T}^{\omega} \in \mathcal{C}(\omega(\tau))\quad\text{for}\,\,\PP\text{-a.a.}\,\, \omega \in \Omega_{X}^{[0,\infty)};
    \]
    \item[(b)] The reconstruction property holds, that is, for each $\B_T$-measurable map $\omega\mapsto Q_\omega:\Omega_{X}^{[0,\infty)}\to \mathrm{Prob}(\Omega_{X}^{[\tau,\infty)})$ with 
    \[
    \Phi_{-\tau}Q_{\omega} \in \mathcal{C}(\omega(\tau)) \quad\text{for}\,\,\PP\text{-a.a.}\,\, \omega \in \Omega_{X}^{[0,\infty)}
    \]
    we have $\PP\otimes_{\tau}Q\in \mathcal{C}(y)$.
\end{itemize}
\end{dfn}
Note Definition \ref{almostMark} is motivated by results in \cite{FlROm,GoRo}. We conclude our probability framework by stating the following results.
\begin{thm}[Markov Selection ?]\label{Mthm} Let $Y$ be a Borel subset of $F$. Let $\{\mathcal{C}(y)\}_{y\in Y}$ be an almost sure pre-Markov family (as defined in )with nonempty convex values. Then there is a measurable map $y\mapsto U_y$ defined on $Y$ with values in $\mathrm{Prob}_{Y}[\Omega_{X}^{[0,\infty)}]$ such that $\PP_{y}\in \mathcal{C}(y)$ for all $y\in Y$ and $\{\PP_{y}\}_{y\in Y}$ has the almost sure Markov property (as defined in )
\end{thm}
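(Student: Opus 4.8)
The plan is to carry out the Krylov--Stroock--Varadhan Markov selection procedure in the almost-sure, set-valued formulation of Definition~\ref{almostMark}, following the presentations of \cite{FlROm,GoRo} (see also \cite{DbEfMh,Krl}): one shrinks the map $y\mapsto\mathcal{C}(y)$ by successively retaining only the maximizers of a countable family of functionals, verifies that every such reduction again yields a measurable, nonempty-convex-compact-valued, almost-surely pre-Markov family, and checks that the limiting map is single-valued; its unique value is the sought-after $\PP_y$.

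\emph{The reduction operation.} For $\lambda>0$, $f\in C_b(X)$ and $U\in\mathrm{Prob}[\Omega_X^{[0,\infty)}]$ set
\[
J_{\lambda,f}(U)=\int_{\Omega_X^{[0,\infty)}}\Big(\int_0^\infty e^{-\lambda t}\,f(\omega(t))\,\dd t\Big)\,\dd U(\omega),
\]
and $\mathcal{C}_{\lambda,f}(y)=\{\,U\in\mathcal{C}(y):J_{\lambda,f}(U)=\sup_{V\in\mathcal{C}(y)}J_{\lambda,f}(V)\,\}$. Since $\omega\mapsto\int_0^\infty e^{-\lambda t}f(\omega(t))\,\dd t$ is bounded and sequentially continuous on the Polish space $\Omega_X^{[0,\infty)}$ (dominated convergence together with continuity of the evaluations $\omega\mapsto\omega(t)$), the functional $J_{\lambda,f}$ is bounded and continuous for the weak topology and hence attains its supremum on the compact set $\mathcal{C}(y)$; linearity of $J_{\lambda,f}$ and convexity of $\mathcal{C}(y)$ make $\mathcal{C}_{\lambda,f}(y)$ nonempty, convex and compact, and measurability of $y\mapsto\mathcal{C}_{\lambda,f}(y)$ into $\mathrm{Comp}(\cdots)$ follows from measurability of $y\mapsto\mathcal{C}(y)$ and of $y\mapsto\sup_{V\in\mathcal{C}(y)}J_{\lambda,f}(V)$ via a measurable selection argument.

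\emph{Reduction preserves the pre-Markov structure -- the crucial step.} Fix $U\in\mathcal{C}_{\lambda,f}(y)$ and, for $\tau>0$, use the additive splitting
\[
\int_0^\infty e^{-\lambda t}f(\omega(t))\,\dd t=\int_0^\tau e^{-\lambda t}f(\omega(t))\,\dd t+e^{-\lambda\tau}\int_0^\infty e^{-\lambda s}f(\omega(\tau+s))\,\dd s.
\]
Conditioning at time $\tau$ via Theorem~\ref{Dis} expresses the expected tail as $e^{-\lambda\tau}J_{\lambda,f}(\Phi_{-\tau}U|_{\B_\tau}^{\omega})$; were $J_{\lambda,f}(\Phi_{-\tau}U|_{\B_\tau}^{\omega})<\sup_{V\in\mathcal{C}(\omega(\tau))}J_{\lambda,f}(V)$ on a set of positive $U$-measure, the reconstruction property of $\mathcal{C}$ (Theorem~\ref{Rec} and Definition~\ref{almostMark}(b)) would let us glue strictly better tails there and build an element of $\mathcal{C}(y)$ with a strictly larger value of $J_{\lambda,f}$, contradicting the maximality of $U$. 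Hence $\Phi_{-\tau}U|_{\B_\tau}^{\omega}\in\mathcal{C}_{\lambda,f}(\omega(\tau))$ for $U$-a.a.\ $\omega$, which is Definition~\ref{almostMark}(a) for $\mathcal{C}_{\lambda,f}$. Running the computation backwards, if $\omega\mapsto Q_\omega$ is $\B_\tau$-measurable with $\Phi_{-\tau}Q_\omega\in\mathcal{C}_{\lambda,f}(\omega(\tau))$, then $\PP\otimes_\tau Q\in\mathcal{C}(y)$ and attains the supremum of $J_{\lambda,f}$ over $\mathcal{C}(y)$, so it lies in $\mathcal{C}_{\lambda,f}(y)$, giving part~(b). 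The exceptional Lebesgue-null time set for $\mathcal{C}_{\lambda,f}$ is that of $\mathcal{C}$ together with the null set of times $\tau$ at which the conditional tails are not optimal.

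\emph{Iteration, single-valuedness and the Markov property.} Fix a countable family $\{f_k\}\subset C_b(X)$ that separates $\mathrm{Prob}[X]$ (such a family exists since $\mathrm{Prob}[X]$ is Polish), enumerate all pairs $(\lambda_n,g_n)$ with $\lambda_n\in\mathbb{Q}\cap(0,\infty)$ and $g_n\in\{f_k\}$, and put $\mathcal{C}^\infty(y)=\bigcap_{N\ge1}(\mathcal{C}_{\lambda_N,g_N}\circ\cdots\circ\mathcal{C}_{\lambda_1,g_1})(y)$. By the previous two steps every finite composition is a measurable, nonempty-convex-compact-valued, almost-surely pre-Markov family, so $\mathcal{C}^\infty$ has nonempty (finite intersection property of compacts), convex and compact values and is almost-surely pre-Markov with exceptional time set the (still Lebesgue-null) countable union of the ones produced along the way. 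If $U,U'\in\mathcal{C}^\infty(y)$ then $J_{\lambda_n,g_n}(U)=J_{\lambda_n,g_n}(U')$ for all $n$; by injectivity of the Laplace transform in $\lambda$ and continuity of $t\mapsto\omega(t)$ this forces the one-dimensional time marginals of $U$ and $U'$ to coincide, and the disintegration property of $\mathcal{C}^\infty$ then propagates the coincidence to all finite-dimensional marginals at tuples of times in the full-measure good set, whence $U=U'$ by density of that set and path continuity. Thus $\mathcal{C}^\infty(y)=\{\PP_y\}$, the map $y\mapsto\PP_y$ is measurable (Hausdorff limit of the measurable maps $y\mapsto(\mathcal{C}_{\lambda_N,g_N}\circ\cdots)(y)$), and $\PP_y\in\mathcal{C}^\infty(y)\subset\mathcal{C}(y)$. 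Finally, applying Definition~\ref{almostMark}(a) to $\mathcal{C}^\infty$: for each $y$ there is a null set $\mathfrak{Z}\subset(0,\infty)$ such that $\Phi_{-\tau}\PP_y|_{\B_\tau}^{\omega}\in\mathcal{C}^\infty(\omega(\tau))=\{\PP_{\omega(\tau)}\}$ for $\PP_y$-a.a.\ $\omega$ and all $\tau\notin\mathfrak{Z}$, i.e.\ $\PP_{\omega(\tau)}=\Phi_{-\tau}\PP_y|_{\B_\tau}^{\omega}$, which is precisely the almost-sure Markov property. \textbf{The main obstacle} is the crucial step above: one must check that maximizing a \emph{single} functional preserves \emph{both} halves of Definition~\ref{almostMark} while controlling the accumulating null set of exceptional times, and in particular that the reconstruction/gluing of part~(b) interacts correctly with the additive structure of $J_{\lambda,f}$; the remaining ingredients (compactness of $\mathcal{C}(y)$, measurable selection, and the Laplace-transform and density arguments) are routine.
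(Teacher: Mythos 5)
The paper itself gives no proof of Theorem \ref{Mthm}: it is stated as an abstract result imported from \cite{FlROm,GoRo} (ultimately going back to \cite{Krl} and Chapter 12 of \cite{STVAR}) and is only \emph{applied} later, in Section 7, to the family $\{\mathcal{C}(y)\}_{y\in Y}$ of laws of dissipative solutions. Your proposal is a correct reconstruction of exactly that standard selection argument: the resolvent-type functionals $J_{\lambda,f}$, the maximizer reduction $\mathcal{C}_{\lambda,f}$, the verification that maximization preserves both the disintegration and the reconstruction halves of Definition \ref{almostMark} via the additive splitting of $\int_0^\infty e^{-\lambda t}f(\omega(t))\,\dd t$ at time $\tau$, and the countable iteration forcing single-valuedness. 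Two points you leave implicit but should be aware of: (i) the contradiction argument in the ``crucial step'' needs a \emph{measurable} choice of strictly better tails $Q_\omega$ on the bad set, i.e.\ a measurable selection theorem applied to $\omega\mapsto\mathcal{C}_{\lambda,f}(\omega(\tau))$, not just pointwise existence; (ii) the passage from equal one-dimensional marginals to $U=U'$ is an induction on the number of time points run \emph{simultaneously over the whole family} $\{\mathcal{C}^\infty(z)\}_{z\in Y}$ (the inductive hypothesis must hold at every state $\omega(t_1)$, not merely at $y$), restricted to times outside the exceptional null set and then extended by path continuity. Both are handled in \cite{FlROm}, and neither affects the validity of your outline.
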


The following proposition is proved in \cite{FlROm}.
\begin{prop}[\cite{FlROm}, Proposition B.1]\label{quad} Let $\alpha$ and $\beta$ be two real-valued continuous and $(\B_t)$-adapted stochastic processes on $\Omega$ such that and let $t_0\geq 0$. For $U\in \mathrm{Prob}[\Omega]$ the following conditions are equivalent:
\begin{itemize}
    \item [(a)] $(\alpha_t)_{t\geq 0}$ is a $((\B_t)_{t\geq 0},U)$-square integrable martingale with quadratic variation $(\beta_t)_{t\geq 0}$
    \item[(b)] For $U$-a.a. $\omega \in \Omega$ the stochastic process $(\alpha_t)_{t\geq t_0}$ is a $((\B_t)_{t\geq t_0},U_{\B_{t_0}}^{\omega})$-square integrable martingale with quadratic variation $(\beta_t)_{t\geq t_0}$ and we have $\E^{U}[\E^{U|_{\B_{t_0}}^{\cdot}}[\beta_t]]<\infty$ for all $t\geq t_0$.
\end{itemize}

\end{prop}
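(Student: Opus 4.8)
The plan is to deduce the whole equivalence from a single structural property of the regular conditional probabilities $U|_{\B_{t_0}}^{\omega}$ furnished by Theorem~\ref{Dis}. Namely, if $g\in L^{1}(\Omega,U)$ is $\B_{t}$-measurable for some $t\ge t_{0}$ — so that, by property (a) of that theorem, $g$ may be viewed as a genuine function on the post-$t_{0}$ path space $\Omega_{X}^{[t_{0},\infty)}$ on which $U|_{\B_{t_0}}^{\omega}$ is supported — then
\[
\E^{U|_{\B_{t_0}}^{\omega}}[g]=\E^{U}[g\mid\B_{t_0}](\omega)\quad U\text{-a.s.},\qquad\text{and consequently}\qquad\E^{U}\big[\E^{U|_{\B_{t_0}}^{\cdot}}[g]\big]=\E^{U}[g].
\]
I will apply this with $g=(M_{t}-M_{s})\Phi$, and with $g=\alpha_{t}^{2}$ and $g=\beta_{t}$, where $M$ stands for either of the two processes $\alpha$ or $\alpha^{2}-\beta$; the needed $L^{1}(U)$-integrability of these choices is guaranteed by the standing square-integrability assumption on $\alpha$ and by $\E^{U}[\beta_{t}]<\infty$.

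First I would reduce the martingale/quadratic-variation statements to a countable list of scalar identities. For a continuous adapted process $M$ and a probability $\mu$, the property that $(M_{r})_{r\ge t_{0}}$ is a $((\B_{r})_{r\ge t_{0}},\mu)$-martingale is equivalent to $\int_{\Omega}(M_{t}-M_{s})\Phi\,\dd\mu=0$ holding for all rationals $t_{0}\le s\le t$ and all $\Phi$ in a fixed countable, uniformly bounded, measure-determining family of $\B_{s}$-measurable cylinder functions (e.g.\ finite products of $\omega\mapsto 1\wedge d_{X}(\omega(r),a)$ with $r\in\mathbb{Q}\cap[t_{0},s]$ and $a$ in a countable dense subset of $X$). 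Path-continuity of $M$ upgrades the rational times to real times, and Doob's $L^{2}$-inequality under $\mu$ — legitimate once the identities along rationals are known — provides the uniform integrability needed to upgrade the countable test family to all bounded $\B_{s}$-measurable functions. Applying this to $M=\alpha$ and $M=\alpha^{2}-\beta$ reduces both (a) and (b) to such a countable list, together with the integrability bookkeeping.

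Both implications are then a short computation. For (a)$\Rightarrow$(b): fixing an admissible triple $(s,t,\Phi)$, the $U$-martingale property gives $\E^{U}[(M_{t}-M_{s})\Phi\mid\B_{s}]=0$ (as $\Phi$ is $\B_{s}$-measurable and $t\ge s\ge t_{0}$), hence $\E^{U}[(M_{t}-M_{s})\Phi\mid\B_{t_{0}}]=0$ $U$-a.s.\ by a further conditioning on $\B_{t_{0}}\subseteq\B_{s}$, which by the first display reads $\int(M_{t}-M_{s})\Phi\,\dd U|_{\B_{t_0}}^{\omega}=0$ for $U$-a.a.\ $\omega$; intersecting over the countable list yields a single $U$-conull set on which $\alpha$ and $\alpha^{2}-\beta$ are $U|_{\B_{t_0}}^{\omega}$-martingales on $[t_{0},\infty)$, i.e.\ $\alpha$ has quadratic variation $\beta$, while $\E^{U}\big[\E^{U|_{\B_{t_0}}^{\cdot}}[\beta_{t}]\big]=\E^{U}[\beta_{t}]<\infty$ and the $L^{2}(U|_{\B_{t_0}}^{\omega})$-bound on $\alpha_{t}$ come from the second half of the first display. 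For (b)$\Rightarrow$(a) one runs this backwards: from (b), $\int(M_{t}-M_{s})\Phi\,\dd U|_{\B_{t_0}}^{\omega}=0$ for $U$-a.a.\ $\omega$, so integrating in $\omega$ and using $\E^{U}[\E^{U|_{\B_{t_0}}^{\cdot}}[\,\cdot\,]]=\E^{U}[\,\cdot\,]$ gives $\E^{U}[(M_{t}-M_{s})\Phi]=0$, whence $\alpha$ and $\alpha^{2}-\beta$ are $U$-martingales after the extension of the Reduction step; square-integrability follows from $\E^{U}[\alpha_{t}^{2}]=\E^{U}[\alpha_{t_{0}}^{2}]+\E^{U}[\beta_{t}-\beta_{t_{0}}]$ (obtained by integrating the quadratic-variation identity under $U|_{\B_{t_0}}^{\omega}$, using that $\alpha_{t_{0}},\beta_{t_{0}}$ are $\B_{t_{0}}$-measurable hence $U|_{\B_{t_0}}^{\omega}$-constant) together with $\E^{U}[\beta_{t}]<\infty$ and the integrability at time $t_{0}$; if the martingale property is required on all of $[0,\infty)$, one splices with the already-available properties on $[0,t_{0}]$ via the tower property.

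The main obstacle I anticipate is the reduction step rather than the conditioning algebra: the countable test family must be chosen so that the discrete identities force the full continuous-time martingale property for $U|_{\B_{t_0}}^{\omega}$-almost every $\omega$ \emph{simultaneously}, and the passage from ``martingale along a dense set of times'' to ``martingale in continuous time'' genuinely needs the uniform-integrability estimate (Doob under the conditional measure), which is only available once the discrete identities are in hand. A secondary technical care point is the integrability bookkeeping that makes the first display applicable — keeping $(M_{t}-M_{s})\Phi$ in $L^{1}(U)$ and identifying it with a function on $\Omega_{X}^{[t_{0},\infty)}$ through Theorem~\ref{Dis}(a) — which is exactly the place where the square-integrability of $\alpha$ and the finiteness of $\E^{U}[\beta_{t}]$ are used.
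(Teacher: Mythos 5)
The paper itself offers no proof of this proposition: it is quoted verbatim from Flandoli--Romito and used as a black box, so there is nothing internal to compare against. Your argument is, in outline, exactly the standard proof of that cited result (and the one given in its source): characterise ``square integrable martingale with quadratic variation $\beta$'' by countably many identities $\E[(M_t-M_s)\Phi]=0$ for $M=\alpha$ and $M=\alpha^2-\beta$, with rational $s\le t$ and a countable measure-determining family of bounded $\B_s$-measurable cylinder functions; transfer these identities between $U$ and the regular conditional probabilities via $\E^{U|_{\B_{t_0}}^{\omega}}[g]=\E^{U}[g\mid\B_{t_0}](\omega)$, intersect the countably many null sets, and recover the continuous-time statement by path continuity plus Doob/uniform-integrability. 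That is correct and at the right level of detail, including your identification of where the hypothesis $\E^{U}[\E^{U|_{\B_{t_0}}^{\cdot}}[\beta_t]]<\infty$ enters.

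Two caveats. First, as transcribed in this paper, condition (a) asserts the martingale property on all of $[0,\infty)$ while (b) only sees the path on $[t_0,\infty)$; your remedy of ``splicing with the already-available properties on $[0,t_0]$'' is circular, since (b) provides no information whatsoever on $[0,t_0]$. In the actual statement of Flandoli--Romito (Proposition B.1) both conditions concern $(\alpha_t)_{t\ge t_0}$, which is the version your argument genuinely proves; you should either prove that version or note explicitly that (b)$\Rightarrow$(a) fails for $t<t_0$ as stated. Second, in the direction (b)$\Rightarrow$(a) your identity $\E^{U}[\alpha_t^2]=\E^{U}[\alpha_{t_0}^2]+\E^{U}[\beta_t-\beta_{t_0}]$ invokes ``the integrability at time $t_0$'', i.e.\ $\alpha_{t_0}\in L^2(U)$, which is not contained in (b) (under each conditional measure $\alpha_{t_0}$ is a constant, so conditional square integrability gives nothing); this hypothesis must be made explicit, exactly as the $\beta$-moment condition is. With these two points fixed, the proof is sound.
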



\subsection{Stochastic analysis}\label{s:stoc}
Let $(\Omega, \FF, (\FF_t)_{t\geq 0})$ be a complete stochastic basis with a probability measure $\p$ on $(\Omega,\FF)$ and right-continuous filtration $(\FF_t)_{t\geq 0}$. Let $\UU$ be a separable Hilbert space with orthonormal basis $(e_k)_{k\in \N}$. We denote by $ L_2(\UU,L^2(\T))$ the set of Hilbert-Schmidt operators from $\UU$ to $L^2(\T)$. The stochastic process $W$ is a cylindrical Wiener process $W =(W_t)_{t \geq 0}$ in $\UU$, and is of the form
\begin{equation}
W(s) = \sum_{k\in \N}e_k \beta_k(s),
\label{Dwiener}
\end{equation}

where $(\beta_k)_{k \in \N}$ is a sequence of independent real-valued Wiener process relative to $(\FF_t)_{t\geq0}$. To identify the precise definition of the diffusion coefficient, set $\UU =\ell^2$ and consider $\varrho \in L^{\gamma}(\T), \varrho > 0$, then the mapping $\phi \in L_2(\UU,L^2(\T))$, that is, $\phi:\UU \to L^2(\T)$ is defined as follows

\[
\phi(e_k) =\phi_k.
\]

We suppose that $\phi$ is a Hilbert-Schmidt operator such that

\begin{equation}\label{eq:HS}
    \sum_{k\geq 1}\|\phi(e_k)\|_{L^{\infty}(\T)}^2<\infty.
\end{equation}
Consequently, since $\phi_k$ is bounded we deduce 
\begin{equation}\label{fyB}
\|\sqrt{\varrho}\phi_k\|_{L_2(\UU, L^2(\T))}^2\lesssim c(\phi)( \|\varrho\|_{L^{1}(\T)}).
\end{equation}
The stochastic integral 
\[
\int_{0}^{\tau} \varrho \phi \,\dd W = \sum_{k\geq1}\int_{0}^{\tau}\varrho\phi(e_k)\, \dd \beta_k,
\]
takes values in the Banach space  $C([0,T];W^{-k,2}(\T))$ in the sense that
\begin{equation}
    \int_{\T}\left(\int_{0}^{\tau}\varrho\phi \,\dd W\cdot \varphi \right)\,\dd x =\sum_{k\geq1}\int_{0}^{\tau}\left(\int_{\T}\varrho\phi(e_k)\cdot \varphi \,\dd x\right)\,\dd \beta_k, \quad \varphi \in W^{k,2}(\T), k>\frac{3}{2}.
\end{equation}
Finally, we define the auxiliary space $\UU_0$ with  $\UU \subset \UU_0$ as 
\begin{eqnarray}
\UU_0 :&=&\Bigg\{ e = \sum_{k}\alpha_k e_k:\sum_{k}\frac{\alpha_{k}^{2}}{k^2} < \infty \Bigg\}, \nonumber\\
\| e\|_{\UU_0}^{2}:& =& \sum_{k}^{\infty}\frac{\alpha_{k}^{2}}{k^2}, \ e=\sum_{k}\alpha_ke_k,
\label{auxiliary}
\end{eqnarray}
so that the embedding $\UU \hookrightarrow \UU_0$ is Hilbert-Schmidt, and the trajectories of $W$ belong $\p$-a.s. to the class $C([0,T];\UU_0)$ (see \cite{Prato}).

\subsection{Strong solutions}
The concept of weak(measure-valued)-strong uniqueness principle for dissipative measure-valued solutions requires existence of strong solutions. These solutions are strong in the probabilistic and PDE sense, at least locally in time. In particular, the Euler system (\ref{Euler}) will
be satisfied pointwise (not only in the sense of distributions) on the given stochastic basis associated
to the cylindrical Wiener process W.

\begin{dfn}[Strong Solution]
Let $(\Omega, \FF, (\FF_t)_{t\geq 0}, \p)$ be a complete stochastic basis with right continuous filtration, let $W$ be an $(\FF_t)_{t\geq 0}$-cylindrical Wiener process. The triplet $[r, \Theta, \vec U]$ and a stopping time $\mathfrak{t}$ is called a (local) strong solution to the system (\ref{Euler}) provided:

\begin{itemize}
    \item the density $r> 0$ $\p$-a.s., $t \mapsto r (t,\cdot)\in W^{3,2}(\T)$ is $(\FF_t)_{t\geq 0}$-adapted,
    \[
    \E\left[\sup_{t\in [0,T]}\|r (t,\cdot)\|_{W^{3,2}}^p\right]<\infty \quad\text{for all $1\leq p <\infty$};
    \]
    \item the temperature $\Theta > 0$ $\p$-a.s., $t \mapsto \Theta (t,\cdot)\in W^{3,2}(\T)$ is $(\FF_t)_{t\geq 0}$-adapted,
    \[
    \E\left[\sup_{t\in [0,T]}\|\Theta (t,\cdot)\|_{W^{3,2}}^p\right]<\infty \quad\text{for all $1\leq p <\infty$};
    \]
    \item the velocity $t\mapsto \vec U(t,\cdot) \in W^{3,2}(\T)$is $(\FF_t)_{t\geq 0}$-adapted,
    \[
    \E\left[\sup_{t\in [0,T]}\|\vec U (t,\cdot)\|_{W^{3,2}}^p\right]<\infty \quad\text{for all $1\leq p <\infty$};
    \]
    \item  for all $t\in [0,T]$ there holds $\p$-a.s.
    \[
    r(t\wedge\mathfrak{t}) = \varrho(0)- \int_{0}^{t\wedge \mathfrak{t}}\mathrm{div}_x(r\vec U)\,\dd t,
    \]
    \[
    (r\vec U)(t\wedge \mathfrak{t})=(r\vec U)(0)-\int_{0}^{t\wedge}\mathrm{div}(r\vec U\otimes \vec U)\, \dd t-\int_{0}^{t\wedge\mathfrak{t}}\nabla_x p(r,\Theta)\,\dd t +\int_{0}^{t\wedge \mathfrak{t}}r\phi\, \dd W,
    \]
    \[
    (rs(r,\Theta))(t\wedge\mathfrak{t}) = (rs(r,\Theta))(0)- \int_{0}^{t\wedge \mathfrak{t}}\mathrm{div}_x(rs(r,\Theta)\vec U)\,\dd t,
    \]
    where $s$ is the total entropy given by (\ref{entropy}).
\end{itemize}
\end{dfn}

\begin{rmk}
We expect blow up in finite time for strong solutions as in the deterministic case \cite{Sm}. 
\end{rmk}

\subsection{The approximate system}\label{Bsec}
To begin, we introduce a cut-off function 

\[
\chi \in C^{\infty}(\R), \chi(z)= \begin{cases}
1 \, \text{for}\, z \leq 0,\\
\chi'(z) \leq 0\, \text{for}\, 0 < z<1,\\
\chi(z)=0 \, \text{for}\, z\geq 1,
\end{cases}
\]
together with the operator
\begin{equation}\label{cut}
\phi_{\varepsilon}=\chi \left(|\vec v|-\frac{1}{\varepsilon}\right)\phi,\quad \varepsilon >0.
\end{equation}
Let $Q = (0,T)\times \T$ be the periodic space-time cylinder. We consider a stochastic variant of a system introduced in \cite{KrZa}, and further refined in \cite{HoFeBr}.  That is, the \textit{complete} Euler system (\ref{Euler}) is approximated by:

\begin{equation}\label{eq:Euler}
\begin{cases}
\dd \varrho + \mathrm{div}(\varrho \vec u )\dd t =0 \quad\qquad\qquad\qquad\qquad\,\,\,\,\text{in $Q$,}\\
\dd \varrho \vec u + \mathrm{div}(\varrho \vec u \otimes \vec u)\dd t+\nabla_x p(\varrho, s)\,\dd t  =\varepsilon \mathcal{L}\vec u \,\dd t +\varrho\phi_{\varepsilon}\dd W\,\,
 \text{in $Q$,}\\
\dd \varrho s +\mathrm{div}(\varrho s \vec u )\,\dd t \geq 0 \quad\qquad\qquad\qquad\qquad\,\,\,\,\text{in $Q$,}
\end{cases}
\end{equation}

with initial conditions

\[
\varrho(0.\cdot) =\varrho_{0,\varepsilon}\, \vec u (0,\cdot) = \vec u_{0,\varepsilon},\, s(0,\cdot)= s_{0,\varepsilon}.
\]

Here, the unknown fields are: the fluid density $\varrho =\varrho(t,x)$, the velocity $\vec u = \vec u(t,x)$ and the total entropy ($S=\varrho s$). We denote by $\mathcal{L}$, the suitable `viscosity' operator.

\begin{rmk}[Viscosity operator]Let $W^{3,2}(\T)$ be a separable Hilbert space
\[
W^{3,2}(\T) =\bigg\{\vec u\in W^{3,2}(\T)\bigg\},
\]
 complemented with $((\,;\,))$, a scalar product on $W^{3,2}(\T)$,i.e.

\[
((\vec v; \vec w)) = \sum_{|\alpha|=3}\int_{\T}\partial_{x}^{\alpha} \vec v\cdot \partial_{x}^{\alpha} \vec w \, \dd x +\int_{\T}\vec{v}\cdot \vec{w}\, \dd x, \,\, \vec v,\vec w \in W^{3,2}(\T).
\]
\end{rmk}

In reference to \cite{Kato}, we consider a self-adjoint  operator $\mathcal{L}$  on $W^{3,2}(\T)$ associated with the bilinear form $((\,;\,))$ given by
\[
\mathcal{L}\vec u = \Delta^{3}\vec u-\vec u = \sum_{|\alpha|=3}(\partial_{x}^{\alpha})\partial_{x}^{\alpha}\vec u -\vec u.
\]

In view of the viscosity operator $\mathcal{L}$ considered, the weak formulation associated with the momentum equation in (\ref{eq:Euler}) reads
\begin{equation*}
\begin{split}
    \left[ \int_{\T}\varrho \vec u \cdot \varphi \, \dd x\right]_{t=0}^{t =\tau} = \int_{0}^{T}\int_{\T}\left[\varrho\vec u \otimes \vec u :\nabla_x\varphi + \varrho^{\gamma}\exp{\left(\frac{ s}{c_v}\right)}\mathrm{div}\varphi\right]\,\dd x\dd t \\
    -\varepsilon\int_{0}^{T}((\vec u;\varphi))\,\dd t +\int_{0}^{T}\int_{\T}\varrho \phi\cdot \varphi \,\dd x \dd W,
\end{split}
\end{equation*}
for any $\tau>0$, and any $\varphi \in W^{3,2}(\T)$. The continuity equation and total entropy in (\ref{eq:Euler}) are solved strongly, while the momentum equation is solved in the weak sense.  We expect the approximate system (\ref{eq:Euler}) to have stochastically strong solutions, but in the present work martingale solutions are sufficient for our purposes. In the following we state the existence theorem of martingale solutions to the approximate system (\ref{eq:Euler}).

\begin{thm}\label{ExthmA}
Assume (\ref{eq:HS}) holds. Let $\Lambda_{\varepsilon}$ be a Borel probability measure on $L^{\gamma}(\T)\times L^{\gamma}(\T)\times L^{\frac{2\gamma}{\gamma +1}}(\T)$ such that
\[
\Lambda\bigg\{(\varrho,S,\vec m)\in L^{\gamma}(\T)\times L^{\gamma}(\T)\times L^{\frac{2\gamma}{\gamma +1}}(\T): 0<\underline{\varrho}<\varrho<\overline{\varrho},0<\underline{\vartheta}<\vartheta<\overline{\vartheta} \bigg\}=1,
\]
where $\underline{\vartheta}, \overline{\vartheta},\underline{\varrho}, \overline{\varrho} $ are deterministic constants. Moreover, the moment estimate
\[
\int_{L^{\gamma}(\T)\times L^{\gamma}(\T)\times L^{\frac{2\gamma}{\gamma +1}}(\T)}\left\|\frac{1}{2}\frac{|\vec m|^2}{\varrho}+c_v\varrho^{\gamma}\exp{\left(\frac{S}{c_v\varrho}\right)}\right\|_{L^1(\T)}^p\,\dd \Lambda_{\varepsilon} < \infty,
\]
holds for all $p\geq 1$. Then there exists a martingale solution to the approximate problem (\ref{eq:Euler}) subject to initial law $\Lambda_{\varepsilon}$. 
\end{thm}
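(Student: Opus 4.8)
The plan is to construct martingale solutions to the regularized system \eqref{eq:Euler} by a Faedo--Galerkin approximation in space combined with a fixed-point/stopping-time argument to handle the quadratic nonlinearities, and then pass to the limit using stochastic compactness. First I would fix a finite-dimensional subspace $X_N \subset W^{3,2}(\T)$ spanned by smooth eigenfunctions of $\mathcal{L}$ (equivalently, trigonometric polynomials), project the momentum equation onto $X_N$, and solve the continuity and total-entropy equations by the method of characteristics (they are transport equations with a fixed, smooth velocity field $\vec u_N \in X_N$). The key structural point is that, since the velocity is a polynomial, the density $\varrho_N$ and entropy $S_N$ stay smooth and, crucially, the maximum principle for the transport equation propagates the pointwise bounds $0 < \underline\varrho \le \varrho_N \le \overline\varrho$ and the analogous bounds on $\vartheta_N$ (hence on $s_N$), at least up to a stopping time controlling $\|\vec u_N\|$; these bounds keep the pressure $p(\varrho,s) = \varrho^\gamma \exp(s/c_v)$ and all coefficients Lipschitz on the relevant range. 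The resulting system for the Galerkin coefficients is a finite-dimensional SDE with locally Lipschitz coefficients, so it has a unique local strong solution; the moment bound on the initial law together with the energy estimate below lets me rule out blow-up and obtain a global solution on $[0,T]$ for each $N$.

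The second step is the \emph{a priori} energy estimate, uniform in $N$ (and later in $\varepsilon$, though here only uniformity in $N$ is needed for fixed $\varepsilon$). Testing the Galerkin momentum equation with $\vec u_N$, using the continuity and entropy equations to handle the convective and pressure terms, and applying It\^o's formula to $\frac12\int_\T \varrho_N |\vec u_N|^2\,\dd x$, I obtain the balance
\begin{equation*}
\dd\int_\T\left[\tfrac12\varrho_N|\vec u_N|^2 + c_v\varrho_N^\gamma\exp\!\left(\tfrac{S_N}{c_v\varrho_N}\right)\right]\dd x + \varepsilon\,((\vec u_N;\vec u_N))\,\dd t = \int_\T \varrho_N\phi_\varepsilon\cdot\vec u_N\,\dd x\,\dd W + \tfrac12\|\sqrt{\varrho_N}\phi_\varepsilon\|_{L_2}^2\,\dd t,
\end{equation*}
where the internal-energy contribution comes from rewriting the pressure work via the renormalized entropy transport. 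The cut-off $\phi_\varepsilon$ in \eqref{cut} makes the noise coefficient bounded by a constant times $\|\sqrt{\varrho_N}\|$ uniformly, so a Burkholder--Davis--Gundy estimate plus Gr\"onwall gives $\E\sup_{t\le T}\|\cdots\|_{L^1(\T)}^p \le C(p)$ independent of $N$; this in turn yields a uniform $L^p_\omega L^2_t W^{3,2}_x$ bound on $\sqrt{\varepsilon}\,\vec u_N$ from the dissipation term, and through the entropy bound a uniform lower bound on $\vartheta_N$.

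The third step is tightness and passage to the limit. I would set up the path space as a product of $C([0,T];W^{-k,2}(\T))$ (or weak-$L^\gamma$ with the norm topology on a suitable negative Sobolev space) for $\varrho_N$, $S_N$, $\varrho_N\vec u_N$, the space $L^2(0,T;W^{3,2}(\T))$ equipped with the weak topology for $\vec u_N$, and $C([0,T];\UU_0)$ for $W$; the uniform estimates plus the equations (which give equicontinuity of the time increments in a negative Sobolev norm, using the stochastic integral's H\"older continuity via Kolmogorov) yield tightness of the laws. Applying the Jakubowski--Skorokhod representation theorem \cite{Jaku} gives a new probability space, a new Wiener process, and a.s.-convergent copies; I then identify the limit as a martingale solution by passing to the limit in the weak formulation (the pointwise density/temperature bounds pass to the limit and make the pressure term converge, since on the compact range $[\underline\varrho,\overline\varrho]\times[\underline\vartheta,\overline\vartheta]$ all nonlinearities are continuous and the convergences are strong enough in the relevant topologies), and identifying the stochastic integral by the standard martingale-characterization argument (checking that certain functionals are martingales and using the quadratic-variation identification as in Proposition~\ref{quad}). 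The entropy inequality passes to the limit as a one-sided inequality by lower semicontinuity / convexity.

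The main obstacle I anticipate is twofold and both parts concern the strong structure demanded of the density and entropy: first, ensuring that the maximum-principle bounds on $\varrho_N$ and $\vartheta_N$ genuinely survive the Galerkin truncation, since the velocity $\vec u_N$ is only a projection and need not be divergence-controlled in a way that preserves $\varrho_N>0$ globally — this is exactly why the stopping-time localization and the $\varepsilon\mathcal{L}$-regularization are needed, and the argument that the stopping time can be taken to be $T$ relies delicately on the energy estimate combined with the $W^{3,2}$-dissipation controlling $\|\vec u_N\|_{L^\infty}$ via Sobolev embedding in three dimensions (here $W^{3,2}(\T)\hookrightarrow C^1(\T)$, which is what makes $\mathcal{L} = \Delta^3 - \mathrm{Id}$ the right choice). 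Second, identifying the limiting stochastic integral requires care because the noise coefficient $\varrho_N\phi_\varepsilon$ depends on the solution through the cut-off $\chi(|\vec u_N| - 1/\varepsilon)$, so I must verify that $\chi(|\vec u_N|-1/\varepsilon)\phi \to \chi(|\vec u|-1/\varepsilon)\phi$ along the Skorokhod copies in a topology compatible with the stochastic-integral convergence lemma — this uses a.s. convergence of $\vec u_N$ in $L^2(Q)$ (upgraded from the weak $L^2_t W^{3,2}_x$ convergence by interpolation against the strong convergence of $\varrho_N\vec u_N$) together with dominated convergence and the boundedness of $\chi$ and $\phi$.
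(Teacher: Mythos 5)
Your overall architecture --- Galerkin projection of the momentum equation, pathwise solution of the transport equations for $\varrho$ and $S$, the energy balance via It\^o's formula, stochastic compactness through Jakubowski--Skorokhod \cite{Jaku}, and identification of the limiting stochastic integral by the martingale/quadratic-variation method --- coincides with the paper's. Where you genuinely diverge is in the construction of the finite-dimensional solutions. The paper does not attempt a direct fixed-point or local-Lipschitz argument for the coupled system; instead it introduces the additional truncation $[\vec v]_R=\chi(\|\vec v\|_{H_m}-R)\vec v$ and a time-discretization (a Cauchy collocation / iteration scheme with step $h$, (\ref{eq:Baa})--(\ref{eq:Bbb})): on each interval $[nh,(n+1)h)$ the velocity in the transport equations is frozen at $\vec u(nh,\cdot)$, so that $\varrho$ and $S$ are obtained deterministically and the momentum equation becomes a genuine SDE in $H_m$ with known coefficients. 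This is followed by three successive limits $h\to 0$, $R\to\infty$, $m\to\infty$, each via tightness and Skorokhod. Your single-level scheme is shorter on paper, but be aware that your phrase ``finite-dimensional SDE with locally Lipschitz coefficients'' is not accurate as stated: once $\varrho_N$ and $S_N$ are reinserted into the projected momentum equation, the drift at time $t$ depends on the entire past of the Galerkin coefficients through the solution of a transport PDE, so what you actually have is a path-dependent stochastic functional differential equation coupled to an infinite-dimensional hyperbolic system (and the map $(\varrho,\vec u)\mapsto\mathrm{div}(\varrho\vec u)$ loses a derivative, so a naive Picard iteration in $C^{2+\nu}$ fails). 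This is precisely the difficulty the paper's iteration scheme is designed to circumvent; your route can be made rigorous (fixed point in the velocity path alone, with the transport equations solved by characteristics inside the map), but that step needs to be carried out, not asserted.

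A second, more concrete gap concerns the moments you extract from the energy estimate. The transport estimates for the density and entropy, (\ref{eq:maxr})--(\ref{eq:maxrr}), control $\|\varrho\|_{L^\infty_t W^{2,2}_x}$ by $\exp\bigl(\int_0^T\|\vec u\|_{W^{3,2}}\,\dd t\bigr)$; to convert this into the expectation bounds (\ref{eq:maaxr}) one therefore needs \emph{exponential} moments of $\int_0^T((\vec u,\vec u))\,\dd t$, which the paper obtains in (\ref{eq:exp}) from an exponential Burkholder--Davis--Gundy inequality, crucially exploiting the cut-off (\ref{cut}) to make the quadratic variation of the energy martingale deterministically bounded. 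Your Gr\"onwall plus ordinary BDG argument yields only polynomial moments $\E\sup_t\|\cdot\|^p\le C(p)$, which do not control $\E[\exp(\lambda\int_0^T\|\vec u\|_{W^{3,2}}\,\dd t)]$. This is not necessarily fatal --- for tightness one can fall back on bounds in probability via Chebyshev applied to $\int_0^T\|\vec u_N\|_{W^{3,2}}\,\dd t$ --- but if you want the $L^1(\Omega)$-bounds on $\varrho$ and $\partial_t\varrho$ that the compactness argument in the paper actually uses, you must add the exponential moment estimate. The remaining points you raise (the maximum principle surviving the truncation, and the convergence of the solution-dependent noise coefficient $\varrho_N\chi(|\vec u_N|-1/\varepsilon)\phi$) are real and your proposed resolutions are consistent with what the paper does.
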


\subsection{Measure-valued solutions}\label{MVS}
In order to introduce the concept of stochastic measure-valued martingale solutions, we reformulate the \textit{complete} Euler system using the variables $\vec m =\varrho\vec u $ and $S = \varrho s$ so that (\ref{Euler}) reads

\begin{eqnarray}
\dd \varrho + \mathrm{div}_x\vec m\,\dd t &=&0\label{eq:aEuler}\\
\dd  \vec m + \mathrm{div}_x\left(\frac{ \vec m \otimes \vec m}{\varrho}\right)\dd t+\nabla_x p(\varrho, s)\,\dd t  &=& \varrho\phi\dd W\,\,
 \text{in $Q$,}\label{eq:bEuler}\\
\dd S +\mathrm{div}_x\left(\frac{S\vec m}{\varrho} \right)\,\dd t &\geq&0 \quad\qquad\qquad\,\,\text{in $Q$.}\label{eq:cEuler}
\end{eqnarray}

We note that, in general, the admissibility criterion for physically possible solutions is the energy equality and it is the only \textit{tool} of establishing a \textit{priori} bounds. However, the `a \textit{priori}' bounds deduced do not guarantee weak convergences of nonlinear terms $\frac{ \vec m \otimes \vec m}{\varrho}, p(\varrho,s) \in L^1(\T)$ due to the presence of \textit{oscillations} and \textit{concentrations}. Given such scenario, we  adopt the characterisation of (nonlinear terms) in the weak formulation as combination of Young measures and defect measures. 

\begin{itemize}
    \item Young measures are probability measures on the phase space, they capture the oscillations of the solution.
    \item Defect measures are measures on physical space-time and they account for the `blow up' type collapse due to possible concentration points.
\end{itemize}

We are now ready to introduce the concept of measure-valued martingale solutions to the \textit{complete} stochastic Euler system (\ref{eq:aEuler})-(\ref{eq:cEuler}). From here onward, we denote by $\mathcal{M}^+$ the space of non-negative radon measures, and we denote by  $A$ the space of ``dummy variables":
\[
A = \bigg\{[\varrho', \vec m',{S'}]\bigg|\varrho'\geq 0, \vec m' \in \R^3,S'\in \R\bigg\}
\]
By $\mathcal{P}(A)$ we denote the space of probability measures on $A$. 

\begin{dfn}[Dissipative measure-valued martingale solution]\label{E:dfn}Let $\Lambda$ be a borel probability measure on $L^{\gamma}\times L^{\frac{2\gamma}{\gamma +1}}\times L^{\gamma}$ and $\phi \in L_2(\UU;L^2(\T))$. Then 
\[
((\Omega,\FF, (\FF_t)_{t\geq 0},\p),\varrho,\vec m,S, \mathcal{R}_{\text{conv}},\mathcal{R}_{\text{press}},\mathcal{V}_{t,x}, W)
\]
is called a dissipative measure-valued solution to (\ref{eq:aEuler})-(\ref{eq:cEuler}) with initial law $\Lambda$, provided\footnote{Some of our variables are not stochastic processes in the classical sense and we use their adaptedness in the sense of random distributions as introduced in \cite{FrBrHo} (Chap. 2.2).}:

\begin{enumerate}
    \item [(a)] $(\Omega,\FF, (\FF_t)_{t\geq 0},\p)$ is a stochastic basis with complete right-continuous filtration;
    \item[(b)]$W$ is a $(\FF_t)_{t\geq 0}$-cylindrical Wiener process;
    \item[(c)] The density $\varrho$ is $(\FF_t)_{t\geq 0}$-adapted and satisfies $\p$-a.s.
    \[
    \varrho \in C_{\text{loc}}([0,\infty), W^{-4,2}(\T))\cap L_{\text{loc}}^{\infty}(0,\infty;L^{\gamma}(\T));
    \]
    \item[(d)] The momentum $\vec m$ is $(\FF_t)_{t\geq 0}$-adapted and satisfies $\p$-a.s.
    \[
    \vec m \in C_{\text{loc}}([0,\infty), W^{-4,2}(\T))\cap L_{\text{loc}}^{\infty}(0,\infty;L^{\frac{2\gamma}{\gamma +1}}(\T));
    \]
    \item[(e)] The total entropy $S$ is $(\FF_t)_{t\geq 0}$-adapted and satisfies $\p$-a.s.
    \[
    S \in L^\infty([0,\infty),L^{\gamma}(\T))\cap BV_{w,\text{loc}}(0,\infty;W^{-l,2}(\T)), l>\frac{5}{2};
    \]
    \item[(f)] The parametrised measures $(\mathcal{R}_{\text{conv}},\mathcal{R}_{\text{press}},\mathcal{V})$ are $(\FF_t)_{t\geq 0}$-progressively measurable and satisfy $\p$-a.s.
    \begin{eqnarray}\label{eq:para}
    t\mapsto \mathcal{R}_{\mathrm{conv}}(t) &\in &L_{\text{weak-(*)}}^{\infty}(0,\infty;\mathcal{M}^+(\T, \mathbb{R}^{3\times 3}));\\
    t\mapsto \mathcal{R}_{\mathrm{press}}(t) &\in &L_{\text{weak-(*)}}^{\infty}(0,\infty;\mathcal{M}^+(\T, \mathbb{R}));\\
    (t,x)\mapsto \mathcal{V}_{t,x} &\in &L_{\text{weak-(*)}}^{\infty}(Q;\mathcal{P}(A));
    \end{eqnarray}
    \item[(g)]$\Lambda=\p\circ(\varrho (0),\vec m (0),S{0})^{-1}$;
    \item[(h)]For all $\varphi \in C^{\infty}(\T)$ and all $\tau > 0$ there holds $\p$-a.s.
    \begin{equation}\label{eq:cont}
       \left[\int_{\T}\varrho\varphi\, \dd x\right]_{t=0}^{\tau =0} = \int_{0}^{\tau}\int_{\T} \vec m \cdot \nabla \varphi \, \dd x\dd t;
    \end{equation}
    \item[(i)]For all $\boldsymbol{\varphi} \in C^{\infty}(\T)$ and all $\tau > 0$ there holds $\p$-a.s.
    \begin{eqnarray}\label{eq:mcxs}
         \left[\int_{\T}\vec m \cdot \boldsymbol{\varphi}\right]_{t=0}^{t=\tau}&=&\int_{0}^{\tau}\int_{\T}\left[\frac{\vec m \otimes\vec m}{\varrho}:\nabla\boldsymbol{\varphi}+\varrho\exp{\left(\frac{S}{c_v\varrho}\mathrm{div}\boldsymbol{\varphi}\right)}\right]\, \dd x \dd t\\
         &&+\int_{0}^{\tau}\nabla \varphi:\dd \mathcal{R}_{\text{conv}} \dd t+\int_{0}^{\tau}\int_{\T}\mathrm{div} \boldsymbol{\varphi}\,\dd \mathcal{R}_{\text{press}} \dd t\nonumber\\
         &&+\int_{0}^{\tau}\boldsymbol{\varphi}\cdot \varrho \phi \, \dd x \dd W;\nonumber
    \end{eqnarray}
    \item[(j)] The total entropy holds in the sense that 
    \begin{equation}\label{eq:entr}
        \int_{0}^{\tau}\int_{\T}\left[\langle\mathcal{V}_{t,x};Z(S')\rangle\partial_t \varphi+\langle\mathcal{V}_{t,x},Z(S')\frac{\vec m'}{\varrho'}\rangle\cdot \varphi\right]\, \dd x\dd t \leq \left[\int_{\T}\langle \mathcal{V}_{t,x};Z(S')\rangle \varphi\, \dd x\right]_{t=0}^{t=\tau},
    \end{equation}
    for any $\varphi \in C^1([0,\infty)\times \T), \varphi \geq 0, \p$-a.s.,{and any $Z$, $Z \in BC(\R)$ non-decreasing.} 
    \item[(k)] The total energy satisfies
    \begin{equation}\label{eq:Ener}
        \mathbf{E}_t=\mathbf{E}_s+\frac{1}{2}\int_{s}^{t}\|\sqrt{\varrho}\phi\|_{L_2(\UU;L^2(\T))}^2\, \dd \sigma + \int_{s}^{t}\int_{\T}\vec m \cdot \phi \, \dd x \dd W,
    \end{equation}
    $\p$-a.s. for a.a. $0\leq s<t$, where
    \[
    \mathbf{E}= \int_{\T}\left[\frac{1}{2}\frac{|\vec m |^2}{\varrho}+c_v\varrho^{\gamma}\exp{\left(\frac{S}{c_v\varrho}\right)}\right]\, \dd x + \frac{1}{2}\int_{\T}\dd \mathrm{tr}\mathcal{R}_{\text{conv}}(t) +c_v\int_{\T}\dd \mathrm{tr}\mathcal{R}_{\text{press}}(t)
    \]
    for $\tau \geq 0$ and 
    \[
   E_0= \int_{\T}\left[\frac{1}{2}\frac{|\vec m_0 |^2}{\varrho_0}+c_v\varrho_0^{\gamma}\exp{\left(\frac{S_0}{c_v\varrho_0}\right)}\right]\, \dd x.
    \]
\end{enumerate}
\end{dfn}

\begin{rmk}
The use of cut-off function $Z$ in (\ref{eq:entr}) is inspired by Chen and Frid \cite{ChFr}.
\end{rmk}

\subsection{Main results}
In light of the above discussion, we are now ready to state the main results of the paper. The existence of dissipative measure-valued martingale solutions follows from the the following theorem.
\begin{thm}\label{ExMainr}
Assume (\ref{eq:HS}) holds. Let $\Lambda$ be a Borel probability measure on $L^{\gamma}(\T)\times L^{\gamma}(\T)\times L^{\frac{2\gamma}{\gamma +1}}(\T)$ such that
\[
\Lambda\bigg\{(\varrho,S,\vec m)\in L^{\gamma}(\T)\times L^{\gamma}(\T)\times L^{\frac{2\gamma}{\gamma +1}}(\T): 0<\underline{\varrho}<\varrho<\overline{\varrho},0<\underline{\vartheta}<\vartheta<\overline{\vartheta} \bigg\}=1,
\]
where $\underline{\vartheta}, \overline{\vartheta},\underline{\varrho}, \overline{\varrho} $ are deterministic constants. Moreover, the moment estimate

\[
\int_{L^{\gamma}(\T)\times L^{\gamma}(\T)\times L^{\frac{2\gamma}{\gamma +1}}(\T)}\left\|\frac{1}{2}\frac{|\vec m|^2}{\varrho}+c_v\varrho^{\gamma}\exp{\left(\frac{S}{c_v\varrho}\right)}\right\|_{L^1(\T)}^p\,\dd \Lambda < \infty,
\]
holds for all $p\geq 1$. Then there exists a dissipative measure-valued martingale solution to the \textit{complete} stochastic Euler system  (\ref{eq:aEuler})-(\ref{eq:cEuler}) in the sense of Definition \ref{E:dfn} subject to initial law $\Lambda$. 
\end{thm}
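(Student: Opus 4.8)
The plan is to construct dissipative measure-valued martingale solutions as limits of the martingale solutions to the approximate system (\ref{eq:Euler}), whose existence is guaranteed by Theorem \ref{ExthmA}. First I would fix a sequence $\varepsilon_n \to 0$ and, for each $n$, choose an initial law $\Lambda_{\varepsilon_n}$ supported on smooth (or at least sufficiently regular) data, converging weakly to $\Lambda$ in the sense of measures on $L^{\gamma}(\T)\times L^{\gamma}(\T)\times L^{\frac{2\gamma}{\gamma+1}}(\T)$, while respecting the uniform density/temperature bounds $0<\underline{\varrho}<\varrho<\overline{\varrho}$, $0<\underline{\vartheta}<\vartheta<\overline{\vartheta}$ and the $p$-th moment bound on the energy; a standard mollification argument gives such $\Lambda_{\varepsilon_n}$. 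Applying Theorem \ref{ExthmA} yields, for each $n$, a stochastic basis $(\Omega_n,\FF_n,(\FF_t^n),\p_n)$, a cylindrical Wiener process $W_n$, and processes $(\varrho_n,\vec u_n, S_n)$ solving (\ref{eq:Euler}).

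The next step is to derive $n$-uniform a priori estimates. The only structural tool is the energy balance: testing the approximate momentum equation and combining with the entropy/continuity equations (and using It\^o's formula together with the cut-off $\phi_\varepsilon$, which keeps the stochastic integral controlled) gives, after taking expectations and using the Burkholder--Davis--Gundy inequality and (\ref{fyB}), a bound of the form $\E_n\big[\sup_{t\le T}\mathbf{E}_n(t)^p\big] + \varepsilon_n \E_n\big[\int_0^T((\vec u_n;\vec u_n))\dt\big] \le C(p,T,\Lambda)$, uniformly in $n$. From this one extracts: $\varrho_n$ bounded in $L^p_\omega L^\infty_t L^\gamma_x$; $\vec m_n=\varrho_n\vec u_n$ bounded in $L^p_\omega L^\infty_t L^{2\gamma/(\gamma+1)}_x$; the total entropy $S_n$ bounded in $L^p_\omega L^\infty_t L^\gamma_x$ (using the caloric/Boyle relations and that $\varrho^\gamma\exp(S/c_v\varrho)$ controls the relevant quantities); and, crucially, the nonlinearities $\varrho_n\vec u_n\otimes\vec u_n$ and $p(\varrho_n,s_n)=\varrho_n^\gamma\exp(S_n/c_v\varrho_n)$ bounded in $L^p_\omega L^\infty_t \MM^+$ (as measures), so that their weak-$*$ limits decompose into an absolutely continuous part plus the defect measures $\mathcal{R}_{\text{conv}}$, $\mathcal{R}_{\text{press}}$. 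Time-regularity estimates on $\varrho_n,\vec m_n$ (from the equations, in negative Sobolev norms) plus a Young-measure generation argument for the composition $(\varrho_n,\vec m_n,S_n)$ give the parametrised measure $\mathcal{V}_{t,x}\in L^\infty_{\text{weak-}*}(Q;\PP(A))$.

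Then I would set up the stochastic compactness argument. The path space must include the solution components together with all the defect and Young measures; since $\MM^+$ with weak-$*$ topology and $L^\infty_{\text{weak-}*}(Q;\PP(A))$ are not Polish, one invokes Jakubowski's variant of the Skorokhod representation theorem \cite{Jaku} rather than the classical one. Tightness of the laws of $(\varrho_n,\vec m_n,S_n,\mathcal{R}_{\text{conv},n},\mathcal{R}_{\text{press},n},\mathcal{V}_n,W_n)$ follows from the uniform estimates above (Aubin--Lions-type for $\varrho_n,\vec m_n$ in $C_tW^{-4,2}_x$ weak topologies, Banach--Alaoglu for the measures, and standard tightness of $W_n$ in $C_t\UU_0$). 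Skorokhod's theorem gives a new probability space carrying a.s.\ convergent copies; I would then pass to the limit in each equation. The continuity equation (\ref{eq:cont}) and the momentum equation (\ref{eq:mcxs}) pass to the limit using the a.s.\ convergences, with the $\varepsilon_n\mathcal{L}\vec u_n$ term vanishing by the $\varepsilon_n$-weighted bound, and the stochastic integral converging by the standard martingale-identification argument (checking that the limiting Wiener process and the quadratic-variation structure are preserved, e.g.\ via Proposition \ref{quad} or a direct martingale representation lemma). The entropy inequality (\ref{eq:entr}) passes to the limit because $Z\in BC(\R)$ non-decreasing composed with $S_n'$ is handled by the Young measure $\mathcal{V}_{t,x}$, and the inequality direction is preserved under weak limits. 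The energy equality (\ref{eq:Ener}) is obtained by passing to the limit in the approximate energy balance, where lower semicontinuity produces the $\tr\mathcal{R}$ terms with the correct signs; here one must be careful that the limit is an \emph{equality} for a.a.\ $s<t$, which uses that the approximate system conserves energy exactly (up to the It\^o correction) and that the defect measures are precisely defined to absorb the concentration deficits.

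The main obstacle is the identification of the limit objects and the bookkeeping in the compactness step: one must simultaneously (i) generate the Young measure $\mathcal{V}_{t,x}$ from $(\varrho_n,\vec m_n,S_n)$ and show it represents the weak limits of \emph{all} the nonlinear compositions appearing in (\ref{eq:mcxs}), (\ref{eq:entr}), and the energy; (ii) show the residual defect measures $\mathcal{R}_{\text{conv}},\mathcal{R}_{\text{press}}$ are non-negative (matrix-valued in the convective case) and compatible with the energy defect $\tr\mathcal{R}$; and (iii) carry out the martingale identification on the enlarged, non-metrisable path space, which requires verifying that the relevant processes remain martingales with the right filtration after the Skorokhod transfer — the adaptedness of the measure-valued components being understood in the random-distribution sense of \cite{FrBrHo}. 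Additional care is needed because the cut-off $\phi_\varepsilon\to\phi$ only in a pointwise-in-$\vec u$ sense, so one must combine the a.s.\ convergence of $\vec u_n$ with a uniform-integrability argument to remove it in the limit. Once all limit objects are identified, items (a)--(k) of Definition \ref{E:dfn} are verified one by one, completing the proof.
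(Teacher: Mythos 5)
Your overall strategy coincides with the paper's: start from the martingale solutions of the approximate system supplied by Theorem \ref{ExthmA}, derive $\varepsilon$-uniform bounds from the energy balance (Burkholder--Davis--Gundy plus the Hilbert--Schmidt assumption), establish tightness of the joint laws of $(\varrho_\varepsilon,\vec m_\varepsilon,S_\varepsilon)$ together with the nonlinearities $\varrho_\varepsilon\vec u_\varepsilon\otimes\vec u_\varepsilon$ and $\varrho_\varepsilon^\gamma\exp(S_\varepsilon/c_v\varrho_\varepsilon)$ viewed as measure-valued variables, apply Jakubowski's Skorokhod theorem, identify the stochastic integral through the quadratic/cross-variation method, and define the defect measures as the (nonnegative, by convexity) differences between weak-$*$ limits of the nonlinear compositions and the compositions of the limits. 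All of that matches the paper's Section \ref{mvsproof}.

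There is, however, one genuine gap: your treatment of item (k), the energy \emph{equality}. You assert that the limit is an equality because ``the defect measures are precisely defined to absorb the concentration deficits,'' but the measures $\mathcal{R}_{\mathrm{conv}}$ and $\mathcal{R}_{\mathrm{press}}$ are defined from the weak-$*$ limits of the momentum nonlinearities, and there is no a priori reason that their traces exactly account for the drop in the energy under the weak limit; passing to the limit in the approximate energy balance only yields the inequality $\tilde{\mathbf{E}}_t \le \tilde{\mathbf{E}}_0 + \cdots$ by lower semicontinuity. The paper closes this gap with a specific device borrowed from \cite{HoFeBr}: it augments the pressure defect $\mathcal{R}_{\mathrm{press}}(t)$ by a spatially homogeneous measure $h(t)\,\dd x$ with $h\ge 0$ chosen to restore equality in the energy balance; this modification is invisible in the momentum equation because on the torus $\int_{\T} h(t)\,\mathrm{div}_x\varphi\,\dd x = 0$, so all other items of Definition \ref{E:dfn} are unaffected. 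Without this (or an equivalent) step your argument only delivers an energy inequality, which is weaker than what Definition \ref{E:dfn}(k) demands and what the Markov-selection arguments later in the paper rely on.
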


Furthermore, in view of results in \cite{FEJB}, additional to the natural physical principles prescribed in (\ref{caloric})-(\ref{entropy}) we need a purely technical hypothesis

\begin{equation}\label{pressure}
|p(\varrho,\vartheta)|\lesssim (1+\varrho +\varrho e(\varrho,\vartheta)+\vartheta|s(\varrho,\vartheta)|),
\end{equation}
to establish bounds. Consequently, we then establish the following weak (measure-valued)-strong uniqueness principle:
\begin{thm}\label{thm_a} The pathwise weak-strong uniqueness holds true for the system (\ref{eq:aEuler})-(\ref{eq:cEuler}) in the following sense.
Let the thermodynamics functions $ e =e(\varrho,\vartheta), s = s(\varrho,\vartheta),$ and $p=p(\varrho,\vartheta)$ satisfy the Gibbs' relation (\ref{gibbs}), and the technical hypothesis  (\ref{pressure}).
 let 
\[[((\Omega , \FF , (\FF)_{t \geq 0}, \mathbb{P} ),\varrho,\vec m,  S, \mathcal{R}_{\mathrm{conv}},\mathcal{R}_{\mathrm{press}},\mathcal{V}_{t,x},W)
\]
 be a dissipative measure-valued martingale solution to (\ref{eq:aEuler})-(\ref{eq:cEuler}) in the sense of Definition (\ref{E:dfn}), let $[r,\Theta,\vec U]$  and a stopping time $\mathfrak{t}$ be a strong solution of the same problem defined on the stochastic basis with the same Wiener process and with initial data
 \begin{equation}\label{Idata}
 \varrho(0,\cdot) =r(0,\cdot),\quad\vec u (0,\cdot)={\vec U}(0,\cdot),\qquad \vartheta (0,\cdot)=\vec \Theta(0,\cdot)\quad\p\text{-a.s.}
 \end{equation}
Then
 \[ [{\varrho},{\vartheta},{\vec u}](\cdot \wedge \mathfrak{t}) \equiv [r,{\Theta},{\vec U}] (\cdot \wedge \mathfrak{t}),
\]
and
\[ \mathcal{R}_{\mathrm{conv}}=\mathcal{R}_{\mathrm{press}}=0,   \]
 $\p$-a.s., and for any $(t,x)\in (0,T)\times\T$
 \[
  \mathcal{V}_{t\wedge \mathfrak{t},x}= \delta_{ s(r,\Theta)},
\]
$\p$-a.s.
\end{thm}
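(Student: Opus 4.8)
The plan is to prove the weak–strong uniqueness principle by deriving a \emph{relative energy (entropy) inequality} and then applying a stochastic Gr\"onwall argument. First I would introduce the relative energy functional measuring the distance between the dissipative measure-valued solution $[\varrho,\vec m, S,\mathcal R_{\mathrm{conv}},\mathcal R_{\mathrm{press}},\mathcal V_{t,x}]$ and the strong solution $[r,\Theta,\vec U]$, namely
\[
\mathcal E_{\mathrm{rel}}(t)=\int_{\T}\Big\langle\mathcal V_{t,x};\tfrac12\varrho'\Big|\tfrac{\vec m'}{\varrho'}-\vec U\Big|^2+\big(\mathcal H_\Theta(\varrho',S')-\partial_\varrho\mathcal H_\Theta(r,rs(r,\Theta))(\varrho'-r)-\mathcal H_\Theta(r,rs(r,\Theta))\big)\Big\rangle\,\dd x+\tfrac12\operatorname{tr}\mathcal R_{\mathrm{conv}}(t)+c_v\operatorname{tr}\mathcal R_{\mathrm{press}}(t),
\]
where $\mathcal H_\Theta$ is the Helmholtz-type ballistic free energy adapted to the complete system (following \cite{FEJB,FeGwS}). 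The presence of the defect measures in the energy balance (\ref{eq:Ener}) is exactly what makes them appear with a favourable sign in $\mathcal E_{\mathrm{rel}}$.

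Next I would apply the It\^o formula to the strong solution $[r,\Theta,\vec U]$ and to the products appearing in $\mathcal E_{\mathrm{rel}}$ — this is where the stochastic integral $\int \vec m\cdot\phi\,\dd x\,\dd W$ from (\ref{eq:Ener}) and the stochastic integral $\int\boldsymbol\varphi\cdot\varrho\phi\,\dd x\,\dd W$ from (\ref{eq:mcxs}) have to be reconciled. Since both solutions are driven by the \emph{same} cylindrical Wiener process $W$, the martingale terms combine into a single stochastic integral whose expectation vanishes, while the It\^o correction terms produce precisely $\frac12\|\sqrt{\varrho}\phi\|_{L_2}^2$-type contributions that cancel against the corresponding term in the energy balance up to a quadratic remainder $\int_{\T}\langle\mathcal V_{t,x};\varrho'\rangle|\phi_\varrho-\phi_r|^2$, controllable by $\mathcal E_{\mathrm{rel}}$ because $\phi$ is Lipschitz (here one uses (\ref{eq:HS})). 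One then plugs the weak formulations (\ref{eq:cont}), (\ref{eq:mcxs}), (\ref{eq:entr}) tested against the strong solution's fields (and their derivatives), using the strong solution's smoothness ($W^{3,2}$ regularity, so the test-function requirements are met after stopping at $\mathfrak t$). Collecting all terms and invoking the Gibbs relation (\ref{gibbs}) together with the technical pressure bound (\ref{pressure}), the right-hand side reduces to $C\int_0^{t\wedge\mathfrak t}\mathcal E_{\mathrm{rel}}(s)\,\dd s$ plus a martingale plus a term absorbable into $\mathcal E_{\mathrm{rel}}$; taking expectations and applying Gr\"onwall's lemma gives $\E[\mathcal E_{\mathrm{rel}}(t\wedge\mathfrak t)]=0$, since $\mathcal E_{\mathrm{rel}}(0)=0$ by the matching initial data (\ref{Idata}).

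From $\mathcal E_{\mathrm{rel}}(t\wedge\mathfrak t)=0$ $\p$-a.s.\ one reads off the conclusions: the defect measures $\mathcal R_{\mathrm{conv}}$ and $\mathcal R_{\mathrm{press}}$ vanish because their traces are nonnegative and appear with positive coefficients; the Young measure collapses to a Dirac mass $\mathcal V_{t\wedge\mathfrak t,x}=\delta_{[r,r\vec U,rs(r,\Theta)]}$ because the integrand is a strictly convex function of $[\varrho',\vec m', S']$ vanishing only at the strong-solution state (convexity of $\mathcal H_\Theta$ in the conservative variables is the key structural fact, cf.\ \cite{FEJB}); and consequently $[\varrho,\vec m,S](\cdot\wedge\mathfrak t)=[r,r\vec U,rs(r,\Theta)](\cdot\wedge\mathfrak t)$, which translates back to $[\varrho,\vartheta,\vec u](\cdot\wedge\mathfrak t)=[r,\Theta,\vec U](\cdot\wedge\mathfrak t)$ via (\ref{boyle}) and (\ref{entropy}).

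The main obstacle I anticipate is the careful bookkeeping of the \emph{It\^o terms and the defect measures together}: because the defect measures are only $L^\infty_{\mathrm{weak-}(*)}$ in time (an equivalence class, not a continuous process) and the entropy inequality (\ref{eq:entr}) holds only for non-decreasing $Z\in BC(\R)$, one must approximate, pass to the limit in $Z$, and justify that the measure-valued terms entering the relative energy retain the correct sign after the stochastic It\^o expansion — in particular ensuring that no uncontrolled cross terms between $\mathcal R_{\mathrm{conv}},\mathcal R_{\mathrm{press}}$ and the martingale part arise. A secondary technical point is handling the exponential pressure nonlinearity $\varrho^\gamma\exp(S/(c_v\varrho))$ and the concentration defect: one needs the pressure bound (\ref{pressure}) precisely to dominate the pressure defect by $\mathcal E_{\mathrm{rel}}$, and the cut-off $Z$ together with lower/upper bounds on $r,\Theta$ (valid up to the stopping time $\mathfrak t$) to make the relative-entropy comparison between $s(\varrho',\vartheta')$ and $s(r,\Theta)$ rigorous.
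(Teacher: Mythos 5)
Your proposal is correct and follows essentially the same route as the paper: a relative energy functional built from the ballistic free energy $H_\Theta(\varrho,\vartheta)=\varrho e-\Theta\varrho s$ augmented by the traces of $\mathcal R_{\mathrm{conv}},\mathcal R_{\mathrm{press}}$, an It\^o/product-rule computation of the cross terms against the strong solution (where, as you anticipate, the It\^o corrections cancel since $\mathbb D_t^s\vec U=\phi$ for the same Wiener process), the coercivity of the relative energy in the essential/residual decomposition together with the pressure hypothesis (\ref{pressure}), a stopping time controlling $\|\vec U\|_{W^{1,2}}$, and a Gr\"onwall argument in expectation. The only cosmetic difference is that the paper writes the relative energy in terms of the barycenters $[\varrho,\vec m,E]$ plus the defect measures rather than pushing the Young measure through the full integrand, but this does not change the argument.
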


\section{Basic approximate problem}\label{Bapp}
This section of the paper is devoted to the sketch proof of Theorem \ref{ExthmA}, that is, existence of martingale solutions to (\ref{eq:Euler}). The sketch proof of the theorem follows from the ideas presented in [4] to which we refer to for further details. We construct these solutions via a multi-level approximation scheme. The idea here is to start with a finite dimensional approximation of Galerkin type. However, as a consequence of maximum principle (usually incompatible with Galerkin type approximation) this can only be applied to the momentum equation since the we need the density $\varrho$ and temperature $\vartheta$ to be positive at the first level of approximation. Adopting the the approximation scheme introduced in \cite{EA} and adapted to the stochastic setting in \cite{BF} with appropriate adjustments, we regularize our system as follows:\newline

Let $\Delta$ be the Laplace operator defined on the periodic domain $\T$. Let $\{\vec w_n\}_{n\geq 1}$ be the orthonormal system of the associated eigenfunctions. We consider the Galerkin method given by the family of finite-dimensional spaces;

\[
H_m=\left(\rm{span}\bigg\{\vec w_n\,\,\,\bigg|\,n\leq m\bigg\}\right)^3, m=1,2,\ldots
\]
endowed with the Hilbert structure of the Lebesgue space $L^2(Q,\R^3)$. Let 
\[
\Pi_{m}:L^2(Q,\R^3)\to H_m,
\]
be the associated $L^2$-orthogonal projection, and we have $W^{2,2}(\T,\R^3)\hookrightarrow\hookrightarrow C({\T},\R^3)$. Indeed,

\begin{equation}\label{eq:prest}
    \|\Pi_m[\vec f]\|_{L^{\infty}(\T)}\lesssim\|\Pi_m[\vec f]\|_{W^{2,2}(\T)}\lesssim \|\vec f\|_{W^{2,2}(\T)},
\end{equation}
where the associated embedding constants are independent of $m$. Furthermore, since $H_m$ is finite dimensional, all norms are equivalent on $H_m$ for any fixed $m$- (a property that will be frequently used at the first level of approximation). Finally, we introduce the operator
\[
[\vec v]_R =\chi(\|\vec v\|_{H_m}-R)\vec v,
\]
defined for $\vec v \in H_m$. Let $Q =(0,T)\times \T$ be the space-time cylinder, we seek to solve the basic approximate system:

\begin{align}\label{eq:Ba}
  \dd \varrho&+ \mathrm{div}(\varrho[\vec u]_R) \,\dd t =0,\\
  \dd \Pi_m[\varrho\vec u]&+\Pi_m[\mathrm{div}(\varrho[\vec u]_R\otimes\vec u)]\,\dd t+\Pi_m\Big[\chi(\|\vec u\|_{H_m}-R)\nabla\left(p(\varrho,\vartheta)\right)\Big]\label{eq:Bb}\\
  &=\Pi_m\Big[\varepsilon\mathcal{L}\vec u\Big]\,\dd t +\varrho\Pi_m[(\phi_{\varepsilon})]\,\dd W, \nonumber\\
  \label{eq:Bc}
  \dd S&+\big[\mathrm{div}(S[\vec u]_R)\big]\,\dd t=0,
\end{align}
subject to initial law $\Lambda$,
prescribed with  random initial data
\begin{equation}\label{eq:ID}
\begin{split}
\varrho(0, \cdot) &= \varrho_0 \in C^{2 + \nu}(\T), \, \varrho_0 > 0,
\vartheta(0, \cdot) = \vartheta_{0},\, \vartheta_{0} \in 
C^{2+\nu}(\T), \, \vartheta_{0} > 0, \\
\vec u (0, \cdot) &= \vec u_{0} \in H_m.
\end{split}
\end{equation}
In our basic approximate system (\ref{eq:Ba})-(\ref{eq:Bc}), equations (\ref{eq:Ba}) and (\ref{eq:Bc}) are deterministic, that is, they can be solved pathwise, and equation (\ref{eq:Bb}) involves stochastic integration. 
The Galerkin projection applied above reduces the problem to a variant of ordinary stochastic differential equation. We solve the system (\ref{eq:Ba})-(\ref{eq:ID}) using an iteration scheme.

\subsection{Iteration Scheme}
We construct solutions to (\ref{eq:Ba})-(\ref{eq:ID}) using a modification of the Cauchy collocation method. Thus, fixing a time step $h>0$ we set 
\begin{equation}\label{eq:id}
    \varrho(t,\cdot)=\varrho_0, \quad \vartheta(t,\cdot)=\vartheta_0, \quad \vec u (t,\cdot)=\vec u_0, \quad \text{for $t\leq 0$},
\end{equation}
 and define recursively, for $t\in [nh,(n+1)h)$
 
 \begin{equation}
    \label{eq:Baa}
  \dd \varrho+ \mathrm{div}(\varrho[\vec u(nh,\cdot)]_R) \,\dd t =0,\quad \varrho(nh,\cdot)=\varrho(nh{-},\cdot),
 \end{equation}
 \begin{equation}\label{eq:Bcc}
  \dd S+\big[\mathrm{div}(S[\vec u(nh,\cdot)]_R)\big]\,\dd t=0,\quad \vartheta(nh,\cdot)=\vartheta(nh-,\cdot),
 \end{equation}
 Here, the unknown quantities $\varrho,\vartheta$ are uniquely deduced from the deterministic equations (\ref{eq:Baa}) and (\ref{eq:Bcc}) in terms of $\vec u$ and initial data. Now given $\varrho,\vartheta$ we solve
 \begin{align}
  \dd \Pi_m[\varrho\vec u]&+\Pi_m\left[\mathrm{div}\Big(\varrho[\vec u(nh,\cdot)]_R\otimes\vec u(nh,\cdot)\Big)\right]\,\dd t+\Pi_m\Big[\chi(\|\vec u(nh,\cdot)\|_{H_m}-R)\nabla\left(p(\varrho,\vartheta)\right)\Big]\, \dd t\label{eq:Bbb}\\
  &=\Pi_m\Big[\varepsilon\mathcal{L}\vec u\Big]\,\dd t +\Pi_m[\varrho(\phi_{\varepsilon})]\,\dd W,\quad t\in [nh,(n+1)h),\quad \vec u(nh,\cdot)=\vec u(nh-). \nonumber
\end{align}
To solve (\ref{eq:Bbb}), it is convenient to reformulate the system using $\dd \vec u$.  We observe that
\[
\dd\Pi_{m}(\varrho\vec u) = \Pi_{m}(\dd \varrho\vec u)+\Pi_{m}(\varrho\dd \vec u)=\Pi_{m}(\partial_t\varrho\vec u)+\Pi_{m}(\varrho\dd \vec u).
\]
We adopt the linear mapping $\mathcal{M}[\varrho]$,
\[
\mathcal{M}[\varrho]:H_m\to H_m,\quad \mathcal{M}[\varrho](\vec u)=\Pi_m(\varrho \vec u),
\]
or, equivalently,
\[
\int_{Q}\mathcal{M}[\varrho]\vec u\cdot \varphi \, \dd x \equiv \int_{Q}\varrho\vec u\cdot \varphi \, \dd x\quad\text{for all $\varphi\in H_m$},
\]
and its properties as introduced in (\cite{FeNoPe}, section 2.2). To be specific, using maximum principle, we take $\varrho$ to be bounded from below away from zero so that the operator $\mathcal{M}[\varrho]$ is invertible. Then we reformulate the relation in (\ref{eq:Bbb}) to obtain

\begin{align}\label{eq:It}
 \vec u(t)-\vec u (nh-)&+\mathcal{M}^{-1}[\varrho(t)]\int_{nh}^{t}\Pi_m\left[\mathrm{div}\Big(\varrho[\vec u(nh,\cdot)]_R\otimes\vec u(nh,\cdot)\Big)\right]\,\dd t\nonumber\\
 &+\mathcal{M}^{-1}[\varrho(t)]\int_{nh}^{t}\Pi_m\Big[\chi(\|\vec u(nh,\cdot)\|_{H_m}-R)\nabla\left(p(\varrho,\vartheta)\right)\Big]\, \dd t\\
 &=\mathcal{M}^{-1}[\varrho(t)]\int_{nh}^{t}\Pi_m\Big[\varepsilon\mathcal{L}\vec u\Big]\,\dd t\nonumber\\
 &+\mathcal{M}^{-1}[\varrho(t)]\int_{nh}^{t}\varrho\Pi_m[(\phi_{\varepsilon})]\,\dd W,\quad  nh<t<(n+1)h. \nonumber
\end{align}

The constructed iteration scheme (\ref{eq:Baa})-(\ref{eq:Bbb}) gives a unique solution $[\varrho,\vartheta,\vec u]$ for any initial data (\ref{eq:id}), and the variables $\varrho,\vartheta$ and $\vec u$ are continuous in time $\p$-a.s. Indeed, we find solution $\varrho$ and $\vartheta$ such that
\[\varrho \in C([0,T];C^{2+\nu}(\T), \vartheta \in C([0,T];C^{2+\nu}(\T)\, \text{a.s.} \]
by applying standard results (see, e.g \cite{AlZa}) pathwise. Finally, given $\varrho$ and $\vartheta$ we can find the velocity
\[
\vec u \in C([0,T]; H_m), \p\text{-a.s.}
\]
solving (\ref{eq:Bbb}) recursively.
 \subsubsection{The limit for vanishing time step}
 The solution $[\varrho,\vartheta,\vec u]$ provided by the iteration scheme (\ref{eq:Baa})-(\ref{eq:Bbb}) exists for any time step $h$. Next, we show that as $h\to 0$ the iteration scheme yields the basic approximate system (\ref{eq:Ba})-(\ref{eq:Bc}).  This essentially follows from establishing uniform bounds for (\ref{eq:Baa})-(\ref{eq:Bbb}) independent of $h$ following the arguments presented in (\cite{BF}, section 3.2). In particular,
 we assume the initial data satisfies the bounds
 \[
0<\underline{\varrho}\leq\varrho_0, \|\varrho_0\|_{C^{2+\nu}(\T)}\leq \overline{\varrho}, \quad
0<\underline{\vartheta}\leq\vartheta_0, \|\vartheta_0\|_{C^{2+\nu}(\T)}\leq \overline{\vartheta},
\]
for deterministic constants $\underline{\varrho}$ and $\overline{\varrho}$ with $\nu > 0$. Taking into account the standard results on compressible transport equations and that $\vec [\vec u]_R$ is bounded in any Sobolev space in terms of $R$, and the equivalence of norms we have:

 \begin{itemize}
     \item A priori bound for density $\varrho$ is given by 
\begin{equation}\label{eq:da}
    \text{ess}\sup(\|\varrho(t,\cdot)\|_C^{2+\nu}+\|\partial_t\varrho(t,\cdot)\|_{C^\nu}+ \|\varrho^{-1}(t)\|_{C(\overline{Q})}\lesssim c(m,R,T,\underline{\varrho},\overline{\varrho}),\, \p\text{-a.s}
\end{equation}
 uniformly in $h$ for deterministic constants $\underline{\varrho}$ and $\overline{\varrho}$ with $\nu > 0$.
\item Similarly, a priori bound for total entropy is 
\begin{equation}\label{eq:ea}
    \text{ess}\sup(\|S(t,\cdot)\|_C^{2+\nu}+\|\partial_t S(t,\cdot)\|_{C^\nu}+ \|S^{-1}(t)\|_{C(\overline{Q})}\lesssim c(m,R,T,\underline{S},\overline{S}),
\end{equation}
$\p$-a.s., where the same bound of $\vartheta$ follows immediately from using $S=\varrho(\log \vartheta^{c_v}-\log(\varrho))$, for deterministic constants $\underline{\vartheta}$ and $\overline{\vartheta}$ uniform in $h$.
\item

Following the lines in \cite{BF} (Section 3.2), that is, establishing bounds for
relation (\ref{eq:Bbb}), we use a test function $\varphi \in H_m$ and take a supremum over $\varphi$, pass to expectations and apply Burkholder Davis-Gundy inequality to control the noise term noise. Finally, applying Gronwall's lemma we deduce the estimate

\begin{equation}\label{eq:enr}
\E\left[\sup_{\tau \in [0,T]}\|\Pi_m[\varrho\vec u](\tau,\cdot)\|_{H_m}^{r}+ \varepsilon \sup_{\tau \in [0,T]}\|\vec u(\tau,\cdot)\|_{H_m}^{r}\right]\lesssim c(r,T)\E[1+\|\vec u_0 \|_{H_m}^{r}],\quad r>1.
\end{equation}
To pass to the limit $h \to 0$ in the momentum equation (\ref{eq:Bbb}) we require 
  the uniform bound (\ref{eq:enr}) and  compactness on the velocities in the space $C([0,T],H_m)$. Furthermore, we need to control the difference $(\vec u-\vec u(nh,\cdot))$ uniformly in time. Similarly, following closely the presentations in \cite{BF} with appropriate modifications to our particular case we infer 
  \begin{equation}\label{eq:Enest}
    \E\left[\|\vec u\|_{C^{\beta}([0,T];H_m)}\right] \lesssim \E\left[\|\vec u_0\|_{H_m}^{r}+1\right], \quad r> 2 ,\beta \in \left(0,\frac{1}{2}-\frac{1}{r}\right),
  \end{equation}
  uniformly in $h$. The `a priori' bounds (\ref{eq:da})-(\ref{eq:Enest}) are sufficient to take the limit $h\to 0$ in the iteration scheme (\ref{eq:Baa})-(\ref{eq:Bbb}).
 \end{itemize}

We consider the joint law of the basic state variables $[\varrho,\vartheta,\vec u, W]$ in the pathspace

\begin{equation}\label{eq:psc}
    \mathfrak{G} \equiv C^l([0,T];C^{2+l}(\T))\times C^l([0,T];C^{2+l}(\T))\times C^l([0,T];H_m)\times C([0,T];\UU_0),\quad l\in (0,\overline{\nu}),
\end{equation}
where $\overline{\nu}$ is the minimum H\"older exponent in (\ref{eq:Enest}). Now let $[\varrho_h,\vartheta_h,\vec u_h, W]$ be the unique solution to the iteration scheme (\ref{eq:Baa})-(\ref{eq:Bbb}), with initial data being $\FF_0$ measurable and satisfying 
\[
0<\underline{\varrho}\leq\varrho_0, \|\varrho_0\|_{C^l([0,T];C^{2+l}(\T))}\leq \overline{\varrho}, \quad
0<\underline{\vartheta}\leq\vartheta_0, \|\vartheta_0\|_{C^l([0,T];C^{2+l}(\T))}\leq \overline{\vartheta},
\]
as well as 
\begin{equation}
    \E\bigg[\|\vec u_0\|_{H_m}^r\bigg]\leq \overline{u}\quad \text{for some $r>2$.}
\end{equation}
$\p$-a.s., 
Denote by $\mathcal{L}[\varrho_h,\vartheta_h,\vec u_h, W]$ the joint law of  $[\varrho_h,\vartheta_h,\vec u_h, W]$ on $\mathfrak{G}$, and by $\mathcal{L}[\varrho_h],\mathcal{L}[\vartheta_h],\mathcal{L}[\vec u_h]$ and $ \mathcal{L}[W]$ the corresponding marginals, respectively. As a consequence of bounds established (\ref{eq:da})-(\ref{eq:Enest}), the joint law $\mathcal{L}[\varrho_h,\vartheta_h,\vec u_h, W]$ is \textit{tight} on the Quasi-Polish space $\mathfrak{G}$. By applying Jabubowski-Skorokhod's representation theorem \cite{Jaku} we get a new probability space with new random variables, a.s convergence of new variables on the pathspace (\textit{w.l.o.g} we keep the same notation). Performing the limit $h\to 0$ in the new probablity space yields 

\begin{align}\label{eq:Ba:lm}
  \partial_t \varrho&+ \mathrm{div}(\varrho[\vec u]_R) \, =0,\\
  \dd \Pi_m[\varrho\vec u]&+\Pi_m[\mathrm{div}(\varrho[\vec u]_R\otimes\vec u)]\,\dd t+\Pi_m\Big[\chi(\|\vec u\|_{H_m}-R)\nabla\left(p(\varrho,\vartheta)\right)\Big]\label{eq:Bb:lm}\\
  &=\Pi\Big[\varepsilon\mathcal{L}\vec u\Big]\,\dd t +\varrho\Pi_m[(\phi_{\varepsilon})]\,\dd W, \nonumber\\
  \label{eq:Bc:lm}
  \partial_t S&+\mathrm{div}(S[\vec u]_R)\,=0.
\end{align}

The system (\ref{eq:Ba:lm})-(\ref{eq:Bc:lm}) is still depended on $R$ and $m$. Now our goal is to perform the limits $R\to \infty$ and $m\to \infty$, respectively. The procedure is similar to the above discussion for the limit $h\to 0$. To proceed as discussed above, we start off by deducing uniform bounds enforced by random initial data and the energy balance.

\subsubsection{Energy balance}
A solution to the approximate system (\ref{eq:Ba})-(\ref{eq:Bc}) satisfies a variant of energy balance. Derivation of total energy balance to the system consist of testing (\ref{eq:Bb}) with the test function $\vec u$ and integrating by parts the resultant formulation. Observe that the scalar product 

\[
\int_{\T}\Pi_m(\varrho\vec u)\cdot \vec u \dd x= \int_{\T} \varrho |\vec u|^2 \dd x
\]
and the linear mapping $\mathcal{M}$ yields
\[
\int_{\T}\mathcal{M}^{-1}[\varrho]\Pi_m[\vec u]\cdot \Pi_m[\varrho\vec u]\, \dd x = \int_{\T}\varrho \mathcal{M}^{-1}[\varrho]\Pi_m[\vec u]\cdot \vec u\,\dd x = \int_{\T}\vec u\cdot\vec u \,\dd x.
\]
Now we are ready to derive the total energy balance, for this, we consider the following proposition.

\begin{prop}\label{energy_p}
let $[\varrho,\vartheta,\vec u,W]$ be a martingale solution of the basic approximate system  (\ref{eq:Ba})-(\ref{eq:Bc}). Then the following total energy balance equations

\begin{equation}\label{energy}
    \dd \int_{\T}\left[\frac{1}{2}\varrho|\vec u|^2+ \varrho e\right]\, \dd x +\varepsilon((\vec u, \vec u))\, \dd t =\frac{1}{2}\sum_{k=1}^{\infty}\int_{\T}\varrho|\Pi_m[\phi_\varepsilon{e_k}]|^{2}\, \dd x \dd t +\int_{\T}\varrho\Pi_m[\phi_\varepsilon]\cdot \vec u \,\dd x \dd W.
\end{equation}
holds $\p$-a.s.
\end{prop}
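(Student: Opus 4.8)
The plan is to obtain \eqref{energy} by splitting the total energy into its kinetic and internal parts, testing the momentum balance \eqref{eq:Bb} against $\vec u$ via It\^o's formula, and adding to the outcome the \emph{pathwise} internal-energy identity produced by the transport equations \eqref{eq:Ba} and \eqref{eq:Bc}. Throughout, $m$ and $R$ are frozen, so $\vec u$ is an It\^o process with values in the finite-dimensional space $H_m$, while $\varrho$, $S$, and hence $\vartheta$, are produced pathwise from $\vec u$ through the deterministic transport equations; by the a priori bounds \eqref{eq:da}--\eqref{eq:ea} they are $C^1$ in time $\p$-a.s.\ with no martingale part. In particular $\varrho>0$ by the maximum principle, so $\mathcal{M}[\varrho]$ is invertible, $t\mapsto\mathcal{M}[\varrho(t)]$ (resp.\ $t\mapsto\mathcal{M}^{-1}[\varrho(t)]$) is a $C^1$ family of symmetric operators on $H_m$, and rewriting \eqref{eq:Bb} through $\vec u=\mathcal{M}^{-1}[\varrho]\,\vec M$ with $\vec M:=\Pi_m[\varrho\vec u]$ turns the momentum equation into a genuine $H_m$-valued It\^o SDE whose martingale part is $\sum_{k\ge1}\Pi_m[\phi_\varepsilon e_k]\,\dd\beta_k$, by the identity $\mathcal{M}^{-1}[\varrho]\Pi_m[\varrho g]=g$ for $g\in H_m$.

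For the kinetic part, write $\tfrac12\int_\T\varrho|\vec u|^2\dx=\tfrac12\langle\mathcal{M}[\varrho]\vec u,\vec u\rangle_{L^2}$ and apply It\^o's formula to this bilinear form, treating the $\varrho$-dependence as a $C^1$, martingale-free perturbation; this yields
\[
\dd\Big(\tfrac12\int_\T\varrho|\vec u|^2\dx\Big)=\langle\vec u,\dd\vec M\rangle-\tfrac12\int_\T\partial_t\varrho\,|\vec u|^2\dx\dt+\tfrac12\langle\mathcal{M}[\varrho]\dd\vec u,\dd\vec u\rangle.
\]
Inserting \eqref{eq:Bb} into $\langle\vec u,\dd\vec M\rangle$ and integrating by parts over $\T$ (using $\vec u\in H_m$ to drop the projections against $\vec u$): the convective contribution equals $-\tfrac12\int_\T\mathrm{div}(\varrho[\vec u]_R)|\vec u|^2\dx\dt$, which cancels the term $-\tfrac12\int_\T\partial_t\varrho\,|\vec u|^2\dx\dt$ in view of \eqref{eq:Ba}; the pressure contribution equals $\chi(\|\vec u\|_{H_m}-R)\int_\T p(\varrho,\vartheta)\,\mathrm{div}\vec u\dx\dt=\int_\T p(\varrho,\vartheta)\,\mathrm{div}[\vec u]_R\dx\dt$; the viscous contribution equals $\varepsilon\langle\mathcal{L}\vec u,\vec u\rangle\dt=-\varepsilon((\vec u,\vec u))\dt$ by the defining relation of $\mathcal{L}$; the stochastic contribution equals $\int_\T\varrho\Pi_m[\phi_\varepsilon]\cdot\vec u\dx\,\dd W$; and the quadratic-variation term equals $\tfrac12\sum_{k\ge1}\langle\mathcal{M}[\varrho]\Pi_m[\phi_\varepsilon e_k],\Pi_m[\phi_\varepsilon e_k]\rangle\dt=\tfrac12\sum_{k\ge1}\int_\T\varrho|\Pi_m[\phi_\varepsilon e_k]|^2\dx\dt$.

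For the internal part, since $\varrho>0$ one deduces from \eqref{eq:Ba} and \eqref{eq:Bc} that $s=S/\varrho$ is transported, i.e.\ $\partial_t s+[\vec u]_R\cdot\nabla s=0$; combining this with \eqref{eq:Ba} and the constitutive laws $p(\varrho,\vartheta)=\varrho\vartheta=\varrho^{\gamma}\exp(s/c_v)$ and $\varrho e=c_v p$, a short computation gives $\partial_t p+\mathrm{div}(p[\vec u]_R)=-(\gamma-1)\,p\,\mathrm{div}[\vec u]_R$, hence, integrating over the torus, $\dd\int_\T\varrho e\dx=c_v\,\dd\int_\T p\dx=-\int_\T p(\varrho,\vartheta)\,\mathrm{div}[\vec u]_R\dx\dt$. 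Adding the kinetic and internal identities, the pressure contributions cancel exactly (both carry the factor $\chi(\|\vec u\|_{H_m}-R)$ since $\mathrm{div}[\vec u]_R=\chi(\|\vec u\|_{H_m}-R)\,\mathrm{div}\vec u$), and what survives is precisely \eqref{energy}.

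The step requiring most care is the rigorous application of It\^o's formula to the composite quantity $\tfrac12\langle\mathcal{M}[\varrho(t)]\vec u(t),\vec u(t)\rangle$ — that is, the clean separation of the $C^1$-in-time, martingale-free dependence through $\varrho$ from the genuine It\^o dynamics of $\vec u$ — together with the two algebraic cancellations (convection against $\partial_t\varrho$, and the two pressure terms). Neither is deep, but it is all bookkeeping; a safe alternative is to establish \eqref{energy} first at the level of the iteration scheme \eqref{eq:Baa}--\eqref{eq:Bbb}, where the velocity solves a finite-dimensional SDE with bounded coefficients, and then pass to the limit $h\to0$ using \eqref{eq:da}--\eqref{eq:Enest}.
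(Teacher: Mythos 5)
Your proposal is correct and follows essentially the same route as the paper: It\^o's formula applied to the kinetic energy $\tfrac12\int_\T\varrho|\vec u|^2\dx$, cancellation of the convective contribution against the $\partial_t\varrho$ term via the continuity equation, and conversion of the pressure work $\int_\T p\,\mathrm{div}[\vec u]_R\dx$ into $-\dd\int_\T\varrho e\dx$ through the transported entropy and Gibbs' relation. You are in fact more explicit than the paper on the two delicate points (the rigorous It\^o step for $\tfrac12\langle\mathcal{M}[\varrho]\vec u,\vec u\rangle$ and the identity $\partial_t p+\mathrm{div}(p[\vec u]_R)=-(\gamma-1)p\,\mathrm{div}[\vec u]_R$), which the paper leaves as brief assertions.
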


\begin{proof}
Applying It\^o's formula to the functional
\[
f(\varrho,\varrho \vec u) = \frac{1}{2}\int_{\T} \varrho |\vec u|^2 \,  \dd x = \frac{1}{2}\int_{\T}\frac{|\vec m|^2}{\varrho}\, \dd x,
\]

from (\ref{eq:Bb}) we obtain,

\begin{eqnarray}\label{ito}
\dd \int_{\T}\frac{1}{2}\varrho |\vec u|^2 \, \dd x&=&-\int_{\T}\bigg[\mathrm{div}(\varrho[\vec u]_R \otimes \vec u)+\chi(\|\vec u\|_{H_m}-R)\nabla_xp(\varrho,\vartheta)\bigg ] \cdot \vec u \, \dd x \dd t \nonumber\\
&&+ \int_{\T} \varepsilon \mathcal{L}\vec u \cdot \vec u \, \dd x \dd t  -\frac{1}{2}\int_{\T}|\vec u|^2 \dd \varrho \dd x  \\
&&+ \frac{1}{2}\sum_{k=1}^{\infty}\int_{\T}\varrho |\Pi_m[(\phi_{\varepsilon}){e_k}]|^2 \, \dd x \dd t + \int_{\T}\varrho\phi_{\varepsilon}\cdot \vec u \, \dd x \dd W. \nonumber
\end{eqnarray}

Furthermore, from the continuity equation (\ref{eq:Ba}), we deduce that;
\[
\frac{1}{2}\int_{\T}|\vec u|^2 \dd \varrho \dd x = -\frac{1}{2}\int_{\T}\mathrm{div}(\varrho[\vec u]_R)\cdot|\vec u|^2 \,\dd x \dd t,
\]
such that the integral with convective term simplifies to
\[
\int_{\T}\mathrm{div}(\varrho [\vec u]_R \otimes \vec u)\cdot \vec u \, \dd x   =\frac{1}{2}\int_{\T}\mathrm{div}(\varrho [\vec u]_R )\cdot  |\vec u|^2\, \dd x,
\]

and 
\begin{eqnarray*}
\int_{\T}\chi(\|\vec u\|_{H_m}-R)\nabla_xp(\varrho,\vartheta)\cdot \vec u \, \dd x &=&-\int_{\T}p(\varrho,\vartheta)\mathrm{div} \,[\vec u]_R \, \dd x.
\end{eqnarray*}
In view of the above observations, (\ref{ito}) reduces to

\begin{eqnarray}\label{ito_0}
\dd \int_{\T}\frac{1}{2}\varrho |\vec u|^2 \, \dd x+ \varepsilon((\vec u; \vec u)) &=&
\int_{\T}p(\varrho,\vartheta)\mathrm{div} \,[\vec u]_R \, \dd x \dd t \\
&&+ \frac{1}{2}\sum_{k=1}^{\infty}\int_{\T}\varrho |\Pi_m[(\phi_{\varepsilon})_{e_k}]|^2 \, \dd x \dd t + \int_{\T}\varrho\phi_{\varepsilon}\cdot \vec u \, \dd x \dd W. \nonumber
\end{eqnarray}

Finally, re-writing the entropy equation as an expression of internal energy using Gibb's relation (\ref{gibbs}) the followings holds

\[
\int_{\T}p(\varrho, s)\mathrm{div}[\vec u]_R \, \dd x= -\dd \int_{\T} \varrho e \, \dd x.
\]

Consequently, re-writing  (\ref{ito_0}) yields
\begin{eqnarray}
\dd \int_{\T}\frac{1}{2}\varrho |\vec u|^2 + \varrho e \, \dd x+ \varepsilon((\vec u; \vec u))\,\dd t
&=& \frac{1}{2}\sum_{k=1}^{\infty}\int_{\T}\varrho |\Pi_m[(\phi_{\varepsilon})_{e_k}]|^2 \, \dd x \dd t + \int_{\T}\varrho\phi_{\varepsilon}\cdot \vec u \, \dd x \dd W, \nonumber
\end{eqnarray}
as required.
\end{proof}

\subsubsection{Uniform Bounds}
Keeping $\varepsilon>0$ fixed, we derive bounds independent of the parameters $R$ and $m$. We note, the projections $\Pi_m$ are bounded by (\ref{eq:prest}). In view of (\ref{energy}), we deduce the estimate

\begin{equation}\label{eq:estimate}
    \int_{\T}\left[\frac{1}{2}\varrho|\vec u|^2+ \varrho e\right]\, \dd x +\varepsilon\int_{0}^{T}((\vec u,\vec u))\, \dd t
    \lesssim \bigg(\mathbf{E}_0 +c(T,\phi_{\varepsilon},\overline{\varrho}) + M_t\bigg),
\end{equation}
where
\[
\mathbf{E}_0 =\int_{\T}\left[\frac{1}{2}\varrho_0|\vec u_0|^2+ \varrho_0 e_0\right]\, \dd x,\quad M_t = \int_{0}^{T}\int_{\T}\varrho \Pi_m[\phi_\varepsilon]\cdot \vec u \,\dd x\dd W .
\]
Furthermore, taking the exponent of (\ref{eq:estimate}) and the expectation of the resultant exponent formulation we obtain 
\begin{equation}\label{eq:exp}
    \E\left[\exp{\left(\lambda \mathbf{E}_t+\lambda\int_{0}^{T}((\vec u,\vec u))\, \dd t\right)}\right]\leq c \,\E\bigg[\exp{\left( \lambda M_t\right)}\bigg]\lesssim c(\lambda) \quad \forall \lambda >0,
\end{equation}
 $\p$-a.s, the bound follows from applying exponential version of Burkholder-Davis-Gundy inequality to the r.h.s of (\ref{eq:exp}), and using $\chi(|\vec u|-\frac{1}{\varepsilon})\vec u \leq 1/\varepsilon$ to deduce
 \begin{eqnarray*}
 \langle \langle M_t \rangle \rangle &=& \sum_{k}\int_{0}^{T}\bigg( \int_{\T}\underbrace{\varrho \Pi_{m}(\phi_{\varepsilon})_{e_k}\cdot \vec u}_{=\varrho\Pi_m\phi{e_k}\chi(|\vec u|-\frac{1}{\varepsilon})\vec u}\,\dd x\bigg)^2\dd t\\
 &\leq&c(\varepsilon)\sum_{k}\int_{0}^{T} \bigg(\underbrace{\int_{\T}\varrho \,\dd x}_{\leq c(\overline{\varrho})}\bigg)^2\|\Pi_m\phi{e_k}\|_{L_{x}^{\infty}}^{2}\, \dd t\\
 &\lesssim& c(\varepsilon,\phi,\overline{\varrho}).
 \end{eqnarray*}
 
\textbf{Limit $R\to \infty$}. We assume the parameter $m$ is fixed. The approximate problem (\ref{eq:Ba})-(\ref{eq:Bc}) admits a martingale solution $[\varrho_R, \vartheta_R, \vec u_{R}]$ with initial law $\Lambda$ for any fixed $R>0$. To perform the limit $R\to \infty$, we establish compactness of the phase variables and use Jakubowski's variant of the Skorokhod representation theorem \cite{Jaku}.\newline

\textbf{Compactness}. We recall the standard regularity estimates of compressible transport equations in \cite{AlZa} applied to (\ref{eq:Ba:lm}):
\begin{equation}\label{eq:maxr}
\begin{split}
      \|\varrho(t,\cdot)\|_{L^{\infty}(0,T;W^{2,2}(\T))}\lesssim \|\varrho_0\|_{W^{2,2}(\T)}\exp{\left(\int_{0}^{T}\|[\vec u]_R\|_{W^{3,2}}\,\dd t\right)},\\
     \lesssim \|\varrho_0\|_{W^{2,2}(\T)}\exp{\left(\int_{0}^{T}\|\vec u\|_{W^{3,2}}\,\dd t\right)},
\end{split}
\end{equation}
and 
\begin{equation}\label{eq:maxrr}
\begin{split}
      \|\nabla\varrho(t,\cdot)\|_{L^{\infty}(0,T;W^{1,2}(\T))}\lesssim \|\varrho_0\|_{W^{2,2}(\T)}\exp{\left(\int_{0}^{T}\|[\vec u]_R\|_{W^{3,2}}\,\dd t\right)},\\
      \lesssim \|\varrho_0\|_{W^{2,2}(\T)}\exp{\left(\int_{0}^{T}\|\vec u\|_{W^{3,2}}\,\dd t\right)}.
\end{split}
\end{equation}
  We control the right-hand side of (\ref{eq:maxr}) and (\ref{eq:maxrr}) in expectation by using (\ref{eq:exp}) to deduce the estimate
\begin{equation}\label{eq:maaxr}
\E\bigg[\|\varrho\|_{L^\infty(0,T;L^{\infty}(\T))}\bigg]\lesssim c,\quad
\E\bigg[\|\nabla \varrho\|_{L^\infty(0,T;L^{6}(\T))}\bigg]\lesssim c,
\end{equation}
where $c>0$ is dependent on initial data. In view of (\ref{eq:estimate}), (\ref{eq:maxrr}),(\ref{eq:maaxr})  and (\ref{eq:Ba:lm}) we deduce that
 
 \[
 \E\bigg[\|\partial_t\varrho\|_{L^{2}(0,T;L^{\infty}(\T))}\bigg]\lesssim c .
 \]

 {Finally, we obtain the estimate}
\begin{equation}\label{eq:mxr}
  \E\bigg[\|\partial_t\varrho\|_{L^{2}(0,T;L^{\infty}(\T)} \bigg] +\E\bigg[\|\varrho\|_{L^{\infty}(0,T;L^{\infty}(\T)}\bigg]\lesssim c,
\end{equation}
where $c>0$ is dependent on initial data. The standard regularity estimate of the total entropy for the variable $S$ follows same arguments as shown for $\varrho$ and we obtain the $\vartheta$ estimate via the  relation $s=\log \vartheta^{c_v}-\log(\varrho)$.
Consequently, using (\ref{eq:Bb:lm}), (\ref{eq:estimate}) and (\ref{eq:prest}), the compactness of $\varrho \vec u$ with respect to the time variable follows from the bound
 
 \begin{equation}\label{Estimate}
\E \left[\|\varrho\vec{u}\|_{C^{\alpha}([0,T];W^{-3,2}(\T))}^r \right] \lesssim c(r),
 \end{equation}
 
for all $0<\alpha(r) < 1/2$. Accordingly, with established uniform bounds necessary to perform the limit, we proceed as in $h$-limit.
We consider the joint law of the basic state variables $[\varrho,S,\vec u, W]$ in the pathspace

\begin{equation}\label{eq:Rpsc}
    \mathfrak{G} \equiv L^{2}(0,T;W^{1,2}(\T))\times L^{2}(0,T;W^{1,2}(\T))\times C([0,T];W^{-4,2}(\T))\times L^2(0,T;W^{3,2}(\T))\times C([0,T];\UU_0).
\end{equation}
 Let $[\varrho_R,\vartheta_R,\vec m_R, W]$ be the unique solution to the iteration scheme (\ref{eq:Baa})-(\ref{eq:Bbb}) with respect to initial law $\Lambda$ and assume
\[
0<\underline{\varrho}\leq\varrho_0, \|\varrho_0\|_{W^{2,2}(\T))}\leq \overline{\varrho}, \quad
0<\underline{\vartheta}\leq\vartheta_0, \|\vartheta_0\|_{W^{2,2}(\T))}\leq \overline{\vartheta},
\]
as well as 
\begin{equation}
    \E\bigg[\|\vec u_0\|_{H_m}^r\bigg]\leq \overline{u}\quad \text{for some $r>2$,}
\end{equation}
$\p$-a.s.
Arguing similarly as in the {h}-limit (with obvious modifications):
We apply the Jakubowski's variant of Skorokhod representation theorem \cite{Jaku}, create new probability space with new sequence of random variables that are a.s convergent (\textit{w.l.o.g} we keep the same notation). Thus, passing  the limit $R\to \infty$ in (\ref{eq:Ba:lm})-(\ref{eq:Bc:lm}) yields

\begin{align}\label{eq:Baa:lm}
  \partial_t \varrho&+ \mathrm{div}(\varrho\vec u)  =0,\\
  \dd \Pi_m[\varrho\vec u]&+\Pi_m[\mathrm{div}(\varrho\vec u\otimes\vec u)]\,\dd t+\Pi_m\Big[\nabla\left(p(\varrho,\vartheta)\right)\Big]\label{eq:Bbb:lm}\\
  &=\Pi_m\Big[\varepsilon\mathcal{L}\vec u\Big]\,\dd t +\varrho\Pi_m[(\phi)]\,\dd W, \nonumber\\
  \label{eq:Bcc:lm}
  \partial_t S&+\mathrm{div}(S\vec u)=0.
\end{align}

\subsubsection{Galerkin Limit }
\textbf{Limit} $m\to \infty$. The approximate problem (\ref{eq:Baa:lm})-(\ref{eq:Bcc:lm}) admits a martingale solution $[\varrho_m, \vartheta_m, \vec u_{m}]$ with initial law $\Lambda$ for any fixed $m>0$. We proceed step by step as in the $R$-limit, that is,  following preceding parts, we establish uniform bounds (compactness) and perform the limit. In this case, the density estimate (\ref{eq:mxr}) and similarly the temperature estimate continue to hold for $m\to \infty$, and {we can weaken the regularity of initial data in (\ref{eq:Rpsc}) by considering a sequence of initial laws that lose regularity when $m\to \infty$}. Performing the limit $m\to \infty$ yields a martingale solution in to the system
\begin{align*}
  \dd \varrho&+ \mathrm{div}(\varrho\vec u) \,\dd t =0,\\
  \dd \varrho\vec u&+\mathrm{div}(\varrho\vec u]\otimes\vec u)\,\dd t+\nabla\left(p(\varrho,\vartheta)\right)\\
  &=\varepsilon\mathcal{L}\vec u\,\dd t +\varrho\phi_{\varepsilon}\,\dd W, \nonumber\\
  \dd S&+\mathrm{div}(S\vec u)\,\dd t=0.
\end{align*}
this completes the proof of Theorem \ref{ExthmA}.

\section{Existence results}\label{mvsproof}

The proof Theorem \ref{ExMainr} consists of establishing `a priori bounds' from the energy inequality, compactness arguments in space-time variables, and application of Jakubowski's version of Skorokhod representation theorem \cite{Jaku} to deal with Quasi-Polish spaces.

\begin{rmk}
For any $\varepsilon > 0$ Theorem \ref{ExthmA} yields the existence of martingale solution 
\[
((\Omega_{\varepsilon},\FF_{\varepsilon},(\FF_{t}^{\varepsilon}),\p_{\varepsilon}), \varrho_{\varepsilon},\vec m_{\varepsilon},S_{\varepsilon},W^{\varepsilon})
\]
to (\ref{eq:Euler}). We can assume without loss of generality that the probability space does not depend on $\varepsilon$ (see, e.g \cite{Jaku}), that is, the solution is given by
\[
((\Omega,\FF,(\FF_{t}^{\varepsilon}),\p), \varrho_{\varepsilon},\vec m_{\varepsilon},S_{\varepsilon},W^{\varepsilon}).
\]
\end{rmk}
We are now ready to consider the following proposition of `a priori bounds'.

 \begin{prop}\label{propE} Let $p \in [1,\infty) $. Then the functions $\varrho,\vec u$  and $s$ satisfy the following
\begin{gather}\label{priori}
    \E\left(\sup_{t \in (0,T)}\int_{\T}\bigg[ \frac{1}{2}\frac{|\vec m_{\varepsilon}|^2}{\varrho_{\varepsilon}}+ c_v\varrho_{\varepsilon}^{\gamma}\exp\left(\frac{ S_{\varepsilon}}{c_v\varrho_{\varepsilon}}\right) \bigg]\,\dd x + \varepsilon \int_{0}^{T}((\vec u_{\varepsilon};\vec u_{\varepsilon}))\, \dd x \dd t \right)^p\\
\leq C\left(1 +\E\bigg[ \int_{\T}\bigg( \frac{1}{2}\varrho_{0,\varepsilon} |\vec u_{0,\varepsilon}|^2 + c_v\varrho_{0,\varepsilon}^{\gamma}\exp\left(\frac{ S_{0,\varepsilon}}{c_v\varrho_{0,\varepsilon}}\right) \bigg)\,\dd x\bigg]^p \right)
\leq  C(T,\overline{\varrho},\phi, \Lambda), \nonumber
\end{gather}
uniformly in $\varepsilon$, where $\Lambda$ is the initial law.
\end{prop}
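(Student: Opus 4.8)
The plan is to derive (\ref{priori}) from the energy balance of Proposition~\ref{energy_p}, read at the level of the $\varepsilon$-system (\ref{eq:Euler}) obtained after the limits $h,R,m\to\infty$ of Section~\ref{Bapp} (so that the forcing is $\varrho_\varepsilon\phi_\varepsilon\,\dd W$ and the energy source is $\tfrac12\|\sqrt{\varrho_\varepsilon}\phi_\varepsilon\|_{L_2}^2\,\dd t$), together with the Burkholder--Davis--Gundy (BDG) inequality and Gronwall's lemma. First I would record the thermodynamic identification that turns the left-hand side of (\ref{priori}) into the physical energy: inverting (\ref{entropy}) gives $\vartheta=\varrho^{\gamma-1}\exp(s/c_v)$, whence by (\ref{boyle}) $\varrho e=c_v\varrho\vartheta=c_v\varrho^{\gamma}\exp(s/c_v)=c_v\varrho^{\gamma}\exp(\tfrac{S}{c_v\varrho})$ with $S=\varrho s$, while $|\vec m_\varepsilon|^2/\varrho_\varepsilon=\varrho_\varepsilon|\vec u_\varepsilon|^2$ since $\vec m_\varepsilon=\varrho_\varepsilon\vec u_\varepsilon$ and $\varrho_\varepsilon>0$. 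Writing $\mathbf{E}_\varepsilon(t):=\int_\T\big[\tfrac12\tfrac{|\vec m_\varepsilon|^2}{\varrho_\varepsilon}+c_v\varrho_\varepsilon^{\gamma}\exp(\tfrac{S_\varepsilon}{c_v\varrho_\varepsilon})\big]\,\dd x$ and $\mathbf{E}_{0,\varepsilon}:=\int_\T\big[\tfrac12\varrho_{0,\varepsilon}|\vec u_{0,\varepsilon}|^2+c_v\varrho_{0,\varepsilon}^{\gamma}\exp(\tfrac{S_{0,\varepsilon}}{c_v\varrho_{0,\varepsilon}})\big]\,\dd x$, the balance becomes, $\p$-a.s.,
\[
\mathbf{E}_\varepsilon(\tau)+\varepsilon\int_0^\tau((\vec u_\varepsilon;\vec u_\varepsilon))\,\dd t=\mathbf{E}_{0,\varepsilon}+\frac12\int_0^\tau\|\sqrt{\varrho_\varepsilon}\phi_\varepsilon\|_{L_2}^2\,\dd t+M_\tau,\qquad M_\tau:=\int_0^\tau\!\!\int_\T\varrho_\varepsilon\phi_\varepsilon\cdot\vec u_\varepsilon\,\dd x\,\dd W.
\]

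Next I would bound the deterministic source. Since the continuity equation is solved strongly, mass is conserved: $\|\varrho_\varepsilon(t)\|_{L^1(\T)}=\|\varrho_{0,\varepsilon}\|_{L^1(\T)}\le\overline\varrho$ for all $t$ (recall $|\T|=1$ and $\varrho_{0,\varepsilon}\le\overline\varrho$); with (\ref{fyB}), the elementary bound $\|\phi_\varepsilon(e_k)\|_{L^\infty}\le\|\phi(e_k)\|_{L^\infty}$ and (\ref{eq:HS}) this yields $\tfrac12\int_0^\tau\|\sqrt{\varrho_\varepsilon}\phi_\varepsilon\|_{L_2}^2\,\dd t\le c(\phi,\overline\varrho)\,T$, a deterministic constant. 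Taking the supremum over $\tau\in[0,t]$, using $\mathbf{E}_\varepsilon\ge0$ and monotonicity in $\tau$ of the viscous term, I get for the nondecreasing process $Z_\varepsilon(t):=\sup_{\tau\in[0,t]}\mathbf{E}_\varepsilon(\tau)+\varepsilon\int_0^t((\vec u_\varepsilon;\vec u_\varepsilon))\,\dd s$ that $Z_\varepsilon(t)\le 2\big(\mathbf{E}_{0,\varepsilon}+c(\phi,\overline\varrho)T+\sup_{\tau\in[0,t]}|M_\tau|\big)$. For the martingale term, Cauchy--Schwarz in $x$ (splitting $\varrho_\varepsilon=\sqrt{\varrho_\varepsilon}\cdot\sqrt{\varrho_\varepsilon}$), $\|\phi_\varepsilon(e_k)\|_{L^\infty}\le\|\phi(e_k)\|_{L^\infty}$, (\ref{eq:HS}) and mass conservation give
\[
\langle M\rangle_t=\sum_{k}\int_0^t\Big(\int_\T\varrho_\varepsilon\phi_\varepsilon(e_k)\cdot\vec u_\varepsilon\,\dd x\Big)^2\dd s\le c(\phi)\,\|\varrho_{0,\varepsilon}\|_{L^1}\int_0^t\!\!\int_\T\varrho_\varepsilon|\vec u_\varepsilon|^2\,\dd x\,\dd s\le c(\phi,\overline\varrho)\int_0^t Z_\varepsilon(s)\,\dd s,
\]
using $\int_\T\varrho_\varepsilon|\vec u_\varepsilon|^2\,\dd x\le2\mathbf{E}_\varepsilon(s)\le 2Z_\varepsilon(s)$ at the end.

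Then I would raise the bound for $Z_\varepsilon$ to the power $p\ge1$, take expectations, apply BDG in the form $\E[\sup_{\tau\le t}|M_\tau|^p]\le C_p\E[\langle M\rangle_t^{p/2}]$, and use Hölder in time together with $z^{p/2}\le1+z^p$ to arrive at
\[
\E\big[Z_\varepsilon(t)^p\big]\le C(p,T,\phi,\overline\varrho)\Big(1+\E\big[\mathbf{E}_{0,\varepsilon}^{\,p}\big]\Big)+C(p,T,\phi,\overline\varrho)\int_0^t\E\big[Z_\varepsilon(s)^p\big]\,\dd s.
\]
To legitimise this I would first localise by the stopping times $\tau_N=\inf\{t\ge0:Z_\varepsilon(t)\ge N\}$ so that all expectations are finite, establish the inequality for $Z_\varepsilon(\cdot\wedge\tau_N)$ with constants independent of $N$, apply Gronwall's lemma, and let $N\to\infty$ by monotone convergence. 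This gives $\E[Z_\varepsilon(T)^p]\le C(p,T,\phi,\overline\varrho)\big(1+\E[\mathbf{E}_{0,\varepsilon}^{\,p}]\big)$, which is the first inequality of (\ref{priori}); the second follows because $\E[\mathbf{E}_{0,\varepsilon}^{\,p}]=\E\big[\big\|\tfrac12\tfrac{|\vec m_{0,\varepsilon}|^2}{\varrho_{0,\varepsilon}}+c_v\varrho_{0,\varepsilon}^{\gamma}\exp(\tfrac{S_{0,\varepsilon}}{c_v\varrho_{0,\varepsilon}})\big\|_{L^1(\T)}^p\big]$ is bounded uniformly in $\varepsilon$ by the moment hypothesis imposed on the initial laws $\Lambda_\varepsilon$ in Theorem~\ref{ExthmA} (the $\Lambda_\varepsilon$ being prepared so as to approximate $\Lambda$ with $p$-th moments bounded uniformly in $\varepsilon$ and with $\varrho_{0,\varepsilon}\le\overline\varrho$), so the right-hand side is a constant $C(T,\overline\varrho,\phi,\Lambda)$ independent of $\varepsilon$.

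The step I expect to be the main obstacle is closing the BDG--Gronwall loop: the martingale $M_\tau$ sits inside the very quantity whose $p$-th moment is sought, and it is precisely the $\sqrt{\varrho_\varepsilon}$-structure of the noise combined with conservation of mass that allows $\langle M\rangle_t$ to be dominated by $\int_0^t Z_\varepsilon(s)\,\dd s$; without this the estimate does not close, and the stopping-time localisation is what licenses the Gronwall step (a priori $\E[Z_\varepsilon(s)^p]$ is not known to be finite). A secondary, more routine point is to verify that the energy balance of Proposition~\ref{energy_p} persists — at least as an inequality ``$\le$'', which is all that is used here — through the limits $h,R,m\to\infty$ leading to (\ref{eq:Euler}).
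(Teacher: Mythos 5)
Your proposal is correct and follows the same overall strategy as the paper's proof: start from the energy balance of Proposition \ref{energy_p}, use conservation of mass together with (\ref{eq:HS})--(\ref{fyB}) to control the It\^{o} correction and the noise coefficient, and estimate the stochastic integral by the Burkholder--Davis--Gundy inequality. The difference lies in how the estimate is closed. The paper bounds the quadratic variation by $c(\phi,\overline{\varrho})\,\sup_{t\in(0,T)}\int_{\T}\varrho_\varepsilon|\vec u_\varepsilon|^2\,\dd x$ (pulling out a single supremum rather than keeping a time integral) and then absorbs the resulting term into the left-hand side by Young's inequality with a small parameter $\delta$; no Gronwall argument appears. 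You instead retain $\langle M\rangle_t\lesssim\int_0^t Z_\varepsilon(s)\,\dd s$ and close via a Gronwall iteration, localised by the stopping times $\tau_N$. Both routes are valid and yield the same constant structure. The absorption argument is shorter; your version has the merit of making explicit (i) the localisation that legitimises manipulating a priori possibly infinite expectations --- a point on which the paper's absorption step also tacitly relies --- and (ii) the case of general $p\ge 1$, whereas the paper's displayed computation is carried out only at the level of the first moment. Your closing remark that the energy balance need only survive the $h,R,m$ limits as an inequality ``$\le$'' is also the correct reading of how Proposition \ref{energy_p} is actually used at the $\varepsilon$-level.
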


\begin{proof} 
 First, we observe that the energy formulation of the approximate system (\ref{eq:Euler}) is of the form 
\begin{gather*}
\hspace{-7cm}
    \int_{\T}\left[\frac{1}{2}\frac{|\vec m|^2}{\varrho}+ \varrho e\right]\, \dd x +\varepsilon\int_{0}^{T}((\vec u,\vec u))\, \dd t\\
    =\int_{\T}\left[\frac{1}{2}\frac{|\vec m|^2}{\varrho}+ \varrho_0 e_0\right]\, \dd x+
    \frac{1}{2}\sum_{k=1}^{\infty}\int_{0}^{T}\int_{\T}\varrho|\phi{e_k}|^{2}\, \dd x \dd t +\int_{0}^{T}\int_{\T}\varrho \phi_{\varepsilon}\cdot \vec u \,\dd x \dd W
\end{gather*}
For the estimate to hold, we  take the supremum in time first, and complete the proof by taking the expectations. Accordingly, splitting terms and proving them individual in separate steps yields:

\begin{itemize}
    \item firstly, considering the correction term we deduce
    \begin{equation*}
      \frac{1}{2}\sum_{k}\int_{0}^{T}\int_{\T}\varrho|\phi_{\varepsilon}{e_k}|^2 \dd x \dd t = \frac{1}{2}\sum_k\int_{0}^{T}\int_{\T}|\sqrt{\varrho}\phi_{\varepsilon}{e_k}|^2 \dd x \dd t \leq \frac{1}{2}\int_{0}^{T}\|\sqrt{\varrho}\phi\|_{L_2(\UU,L^2(\T))}^{2} \dd t < \infty.
    \end{equation*}
 The bound follows from the assumptions of $\phi$ in (\ref{eq:HS}) and using 
 \[
 \|\varrho\|_{L_x^1}=\|\varrho_0\|_{L_x^1}\leq \overline{\varrho}. 
 \]

\item Next, the noise term. Here we take supremum in time and build expectations. Furthermore, we use the Burgholder-Davis-Gundy inequality to obtain
\begin{eqnarray*}
\E \left( \sup_{t \in (0,T)}\left| \int_{0}^{t}\int_{\T} \varrho \phi\varepsilon \vec u \, \dd x\dd W \right|\right) &=&\E \left( \sup_{t \in (0,T)}\left|\sum_{k} \int_{0}^{t}\int_{\T} \varrho [\phi_\varepsilon]{e_k} \vec u \, \dd x\dd \beta_k \right|\right)\\
&\leq& c \E\left(\int_{0}^{T} \sum_k\left[ \int_{\T}\varrho [\phi_\varepsilon]{e_k} \vec u\, \dd x \right]^2 \, \dd t\right)^{1/2}\\
&\leq& c \E\left(\int_{0}^{T}\sum_k\|\sqrt{\varrho}\phi{e_k}\|_{L_2(\UU,L^2(\T))}^{2}\int_{\T}|\sqrt{\varrho} \vec u|^2 \, \dd x \dd t\right)^{1/2}\\
&\leq& c(\phi)\E \Bigg(\sup_{t \in (0,T)}\underbrace{\int_{\T}\varrho \dd x}_{\leq\, \overline{\varrho}} \int_{\T}\varrho |\vec u|^2\,\dd x \Bigg)^{1/2}\\
&\leq& \frac{\delta}{2}\E \left(\sup_{t \in (0,T)} \int_{\T}\varrho |\vec u|^2\,\dd x \right)+ c^2(\phi,\overline{\varrho},\delta),
\end{eqnarray*}
where the last line follows from Young's inequality. Now taking delta $\delta$ small enough, we can absorb the supremum term from the right.\newline

\item We note that expectation on initial data is bounded by assumption i.e.
\[
\E\bigg[ \int_{\T}\bigg( \frac{1}{2}\varrho_0 |\vec u_0|^2 + c_v\varrho_{0}^{\gamma}\exp\left(\frac{\vec S_0}{c_v\varrho_0}\right) \bigg)\,\dd x\bigg]^p < \infty.
\]

Hence combining the correction and stochastic terms we deduce (\ref{priori}).
\end{itemize}
\end{proof}

In view of Proposition \ref{propE}, we establish the following bounds:\newline

Firstly, we consider $Z\in BC(\R)$ such that
\[
Z' \geq 0, Z(s)
\begin{cases}
< 0 \quad \text{for}\, s < s_0,\\
=0 \quad \text{for}\, s \geq s_0,
\end{cases}
\]
then the total entropy in (\ref{eq:Euler}) satisfies the minimum principle provided that
    \[
     S_0 \geq \varrho_0 \vec s_0 > -\infty\, \, \text{a.a in }\, \T,
    \]
 see \cite{FEJB} for details. Since the entropy is bounded below, using (\ref{priori}) we deduce

\begin{equation}\label{BA}
    \E \left(\sup_{t \in [0,T]}\int_{\T} \varrho^{\gamma}\,\dd x \right) \lesssim C(T,\overline{\varrho},\phi,\Lambda),
\end{equation}
for any $t \in [0,T]$. Now using $\vec m = \varrho\vec u$, we observe
\[
|\vec m|^{\frac{2\gamma}{\gamma+1}} =|\varrho|^{\frac{\gamma}{\gamma+1}}\left|\frac{\vec m}{\sqrt{\varrho}}\right|^{\frac{2\gamma}{\gamma+1}} \lesssim \varrho^{\gamma}+\frac{|\vec m|^2}{\varrho},
\]
 and we obtain

\begin{equation}\label{BB}
    \E \left( \sup_{t\in [0,T]}\int_{\T}|\vec m|^{\frac{2\gamma}{\gamma+1}} \, \dd x\right)\lesssim C(T,\overline{\varrho},\phi,\Lambda),
\end{equation}
for any $t \in [0,T]$. Bounds on the total entropy $S$. Since $ S \geq s_0\varrho$, for $ S \leq  0$
\[
| S|=- S \leq - s_0\varrho .
\]

If $ S \geq 0$, we note

\[
\varrho^{\gamma} \exp{\left(\frac{ S}{c_v \varrho}\right)} = c_{v}^{-\gamma} \frac{\exp{\left(\frac{ S}{c_v \varrho}\right)}}{(\frac{S}{c_v \varrho})^{\gamma}} S^{\gamma} \gtrsim  S^{\gamma};
\]
hence 

\begin{equation}\label{BC}
 \E\left(\sup_{t\in [0,T]}\int_{\T}| S|^{\gamma}\, \dd x \right)   \lesssim C(T,\overline{\varrho},\phi,\Lambda),
\end{equation}
for any $t \in [0,T]$.
Finally, we derive an estimate for the quantity $S/\sqrt{\varrho}$. For $ S \leq 0$, we obtain 

\[
\left|\frac{ S}{\sqrt{\varrho}}\right| \leq - s_0\sqrt{\varrho}.
\]
If $ S> 0$, we have

\[
\varrho^{\gamma}\exp{\left(\frac{ S}{c_v\varrho}\right)} = \varrho^{\gamma}\exp{\left(\frac{ S}{\sqrt{\varrho}}\frac{1}{c_v\sqrt{\varrho}}\right)} =c_{v}^{-2\gamma}\frac{\exp{\left(\frac{S}{\sqrt{\varrho}}\frac{1}{c_v\sqrt{\varrho}}\right)}}{\left(\frac{ S}{\sqrt{\varrho}}\frac{1}{c_v\sqrt{\varrho}}\right)^{2\gamma}}\left(\frac{ S}{\sqrt{\varrho}}\right)^{2\gamma} \gtrsim \left(\frac{ S}{\sqrt{\varrho}}\right)^{2\gamma},
\]
and in view of this result we deduce the bound
\begin{equation}\label{BD}
    \E\left(\sup_{t\in [0,T]}\int \left|\frac{ S}{\sqrt{\varrho}}\right|^{2\gamma}\, \dd x\right) \lesssim C(T,\overline{\varrho},\phi,\Lambda).
\end{equation}
In view of the above bounds (\ref{BA})-(\ref{BD}) and the energy inequality (\ref{energy}), we deduce the following (uniform) bounds

\begin{eqnarray}
    \sqrt{\varepsilon}\vec u_{\varepsilon} &\in & L^p(\Omega;L^2([0,T];W^{3,2}(\T)))\\
    \varrho_{\varepsilon}&\in& L^p(\Omega;L^{\infty}([0,T];L^{\gamma}(\T))), \label{A}\\
    \vec m_{\varepsilon} &\in& L^p(\Omega;L^{\infty}([0,T];L^{\frac{2\gamma}{\gamma +1}}(\T))),\\
    \frac{\vec m_{\varepsilon}}{\sqrt{\varrho_{\varepsilon}}}&\in& L^p(\Omega;L^{\infty}([0,T];L^{2}(\T))),\\
    \frac{\vec m_{\varepsilon} \otimes \vec m_{\varepsilon}}{\varrho}&\in& L^p(\Omega;L^{\infty}([0,T];L^{1}(\T))),\\
    S_{\varepsilon}&\in& L^p(\Omega;L^{\infty}([0,T];L^{\gamma}(\T))),\\
    \frac{ S_{\varepsilon}}{\sqrt{\varrho_{\varepsilon}}}&\in& L^p(\Omega;L^{\infty}([0,T];L^{2\gamma}(\T))),
\end{eqnarray}

\subsection{A priori estimates}
Next, we seek to pass to the limit in the nonlinear convective term and this procedure requires compactness arguments. The balance of momentum is given by

\begin{eqnarray*}
\int_{\T}\varrho\vec{u}_{\varepsilon}\cdot \varphi\, \dd x &=& \int_{\T}\varrho_0\vec{u}_0 \cdot \varphi \, \dd x + \int_{0}^{t}\int_{\T}{\varrho_{\varepsilon}\vec{u}_{\varepsilon}\otimes \vec{u}_{\varepsilon}}: \nabla \varphi\, \dd x \dd s \\
&&-\varepsilon \int_{0}^{t}\int_{\T} \nabla \Delta  \vec{u}_{\varepsilon}\cdot\nabla \Delta \varphi \, \dd x \dd s -\varepsilon\int_{0}^{t}\vec u_{\varepsilon}\varphi\,\dd x\dd s\\
&&+\int_{0}^{t}\int_{\T}\varrho_{\varepsilon}^{\gamma}\exp{\left(\frac{ S_{\varepsilon}}{c_v\varrho_{\varepsilon}}\right)}\cdot \mathrm{div} \varphi \, \dd x \dd t + \int_{\T}\int_{0}^{t}\varrho_{\varepsilon} \phi_\varepsilon dW_s \cdot \varphi \dd x,
\end{eqnarray*}
 for all $\varphi \in C^{\infty}(\T)$. We show boundedness of the system by considering deterministic and stochastic parts separately. For the deterministic case, we consider the functional
 
\[
\mathcal{H}_{\varepsilon}(t, \varphi):=
\int_{0}^{t}\int_{\T}{\varrho_{\varepsilon}\vec{u}_{\varepsilon}\otimes \vec{u}_{\varepsilon}}: \nabla \varphi\, \dd x \dd s 
-\varepsilon \int_{0}^{t}((\vec u_{\varepsilon};\varphi)) \dd s 
+\int_{0}^{t}\int_{\T}\varrho_{\varepsilon}^{\gamma}\exp{\left(\frac{ S_{\varepsilon}}{c_v\varrho_{\varepsilon}}\right)}\cdot \mathrm{div} \varphi \, \dd x \dd s.
\]

We observe that 
\begin{equation*}
\begin{cases}
\partial_t\mathcal{H}_{\varepsilon}(t,\varphi)   \in L^1(\Omega; L^2(0,T;W^{-3,2}(\T)),\\ 
\mathcal{H}_{\varepsilon}(t,\varphi) \in L^1(\Omega; W^{1,2}(0,T;W^{-3,2}(\T)),
\end{cases}
\end{equation*}
uniformly in $\varepsilon$. Then we deduce the estimate

\begin{equation*}
\mathbb{E}\left[\|\mathcal{H}_{\varepsilon}\|_{W^{1,2}([0,T];W_{\text{div}}^{-3,2}(\T))}\right] \leq C.
\end{equation*}
 
 The stochastic term yields 

\[
\E \left[ \left \|\int_{0}^{\cdot} \varrho_{\varepsilon} \phi_\varepsilon \,\dd W^{\varepsilon}\right\|_{C^{\alpha}([0,T];L^2(\T))}^{p}\right] \leq c \E \left[ \int_{0}^{T}\|\sqrt{\varrho} \phi\|^{p}_{L_2(\UU,L^2(\T))} \, \dd t \right] \leq c(\overline{\varrho},p,\phi,T),
\]
 for all $\alpha \in (1/p,1/2)$ and $p>2$, see [\cite{Brt_1}, Lemma 9.1.3. b)] or [\cite{Hofm}, Lemma 4.6]). Now combining the deterministic and stochastic estimates, and using the embedding $W^{1,2}_t \hookrightarrow C^{\alpha}_{t}$ and $L^2_x \hookrightarrow W^{-3,2}_x$ shows 
 
 \begin{equation}\label{estimate}
\E \left[\|\varrho_{\varepsilon}\vec{u}_{\varepsilon}\|_{C^{\alpha}([0,T];W^{-3,2}(\T))} \right] \leq c(T),
 \end{equation}
 
for all $\alpha < 1/2$. On the regularity of mass continuity we have
\begin{equation*}
\int_{0}^{T}\int_{\T} \partial_t \rho \varphi\,\dd x  =\int_{0}^{T}\int_{\T} [ \varrho u\nabla_x\varphi] \, \dd x \dd t,
\end{equation*}
so that
\[
\sup_t\|\partial_t \varrho \|_{W^{-3,2}} 
\leq C \sup_t \|\varrho_{\varepsilon} u_{\varepsilon}\|_1.
\]
Hence, $\partial_t \varrho_{\varepsilon} \in L^{\infty}(0,T;W^{-3,2}(\T))$ a.s. and in view of (\ref{priori}) we obtain

\[
\E\left[\| \varrho_{\varepsilon}\|_{C^{\alpha}([0,T];W^{-3,2}(\T))}\right] \leq C.
\]

 Similarly, for the entropy balance we have
\begin{equation*}
\int_{0}^{T}\int_{\T} \partial_t  S \varphi\,\dd x  =\int_{0}^{T}\int_{\T} \left[ S\frac{\vec m}{\varrho} \nabla_x\varphi\right] \, \dd x \dd t,
\end{equation*}
and we deduce
\[
\E\left[\|  S_{\varepsilon}\|_{C^{\alpha}([0,T];W^{-3,2}(\T))}\right] \leq C.
\]

\subsection{Compactness Argument}

Our goal here is to show tightness of the approximate solutions using the following compact embeddings. 

\begin{equation}\label{eqnA}
C^{\alpha}([0,T];W^{-3,2}(\T)) \cap L^{\infty}(L^{\frac{2\gamma}{\gamma +1}}(\T)) \hookrightarrow \hookrightarrow C([0,T];W^{-4,2}(\T))\cap C_w (L^{\frac{2\gamma}{\gamma +1}}(\T)).
\end{equation}

\begin{equation}\label{eqnB}
 C^{\alpha}([0,T];W^{-3,\frac{2\gamma}{\gamma+1}})\cap L^{\infty}(0,T;L^{\gamma}(\T)) \hookrightarrow\hookrightarrow  C([0,T];W^{-4,2}(\T))\cap C_w(0,T; L^{\gamma}(\T)). 
\end{equation}

We set the spaces:

\begin{eqnarray*}
\mathscr{X}_{\varrho_0}: = L^{\gamma}(\T), & \quad& \mathscr{X}_{\vec m_0} = L^{\frac{2\gamma}{\gamma+1}}(\T), \\
\mathscr{X}_{\Vec{m}}: = C([0,T];W^{-4,2}(\T))\cap C_w (L^{\frac{2\gamma}{\gamma +1}}(\T)),&\quad& \mathscr{X}_{W} :=C([0,T];\UU_0),\\
\mathscr{X}_{\varrho}:=C([0,T];W^{-4,2}(\T))\cap C_w([0,T]; L^{\gamma}(\T)), &\quad& \mathscr{X}_{\text{C}}:=L^{\infty}(0,T;\mathcal{M}^{+}(\T,\R^{3\times3})),\\
\mathscr{X}_{\text{P}}:=L^{\infty}(0,T;\mathcal{M}^{+}(\T,\R), &\qquad& \mathscr{X}_{\vec U}:=L^2(0,T;W^{3,2}(\T))),\\
\mathscr{X}_{ S}:=C([0,T];W^{-4,2})\cap C_w([0,T]; L^{\gamma}(\T)), &\qquad& \mathscr{X}_{ S_0}: = L^{\gamma}(\T), \\
\mathscr{X}_{\text{Q}}:=L_{w*}^{\infty}(Q;\mathcal{P}(A)),\\
\end{eqnarray*}
 with respect to weak-* topology for all spaces with $L^{\infty}(\cdot,\mathcal{M}^{\cdot}(\cdot))$. Furthermore, for $T>0$, we choose the product path space

\begin{equation}
  \mathfrak{X}_{T}:=\mathscr{X}_{\varrho_0} \times \mathscr{X}_{\vec m_0}\times \mathscr{X}_{\vec S_0} \times\mathscr{X}_{\varrho }\times \mathscr{X}_{ \Vec{m}} \times\mathscr{X}_{ S} \times\mathscr{X}_{\text{prss}} \times\mathscr{X}_{\text{conv}}\times \mathscr{X}_{W},
\end{equation}

with the following laws:
\begin{equation*}
\begin{cases}
\mu_{{(\varrho \vec{u})}_{\varepsilon}} \ \text{ is the law of }\ \varrho_{\varepsilon} \vec{u}_{\varepsilon} \ \text{on} \   C([0,T];W^{-4,2}(\T))\cap C_w (L^{\frac{2\gamma}{\gamma +1}}(\T)),\\
\mu_{\varrho_{\varepsilon}} \ \text{ is the law of }\ \varrho_{\varepsilon} \ \text{on} \  C([0,T];W^{-4,2})\cap C_w(0,T; L^{\gamma}(\T)),\\
\mu_{ S_{\varepsilon}} \ \text{ is the law of }\ S_{\varepsilon} \ \text{on} \  C([0,T];W^{-4,2})\cap C_w(0,T; L^{\gamma}(\T)),\\
\mu_W \ \text{ is the law of } \ W \ \text{on} \ C([0,T],\UU_0). \\ 
\end{cases}
\end{equation*}
In addition, let $\mu_{\vec U_{\varepsilon}},\mu_{\mathrm{C}_{\varepsilon}}, \mu_{\mathrm{P}_{\varepsilon}} $ and $\mu_{\mathrm{Q}_{\varepsilon}}$ denote the laws of
\[\vec U_{\varepsilon}:=\sqrt{\varepsilon}\vec u\qquad
\mathrm{C}_{\varepsilon}:=\varrho_{\varepsilon} \vec u_{\varepsilon}\otimes \vec u_{\varepsilon}, \qquad \mathrm{P}_{\varepsilon}:=\varrho_{\varepsilon}^{\gamma} \exp{\left(  \frac{ S_{\varepsilon}}{c_v \varrho_{\varepsilon}}\right)},\qquad Q_{\varepsilon}:= S_{\varepsilon} \frac{\vec m_{\varepsilon}}{\varrho_{\varepsilon}},
\]
respectively. Let $\bfr_T$ be the restriction operator which restricts measurable functions (or space-time distributions) defined on $(0,\infty)$ to $(0,T)$. We denote by  $\mathcal{L}_T[\varrho_0,\varrho_0\vec{u}_0, S_0,\varrho_{\varepsilon},\varrho_{\varepsilon}\vec{u}_{\varepsilon}, S_{\varepsilon},\vec U_{\varepsilon}, P_{\varepsilon}, C_{\varepsilon}, Q_{\varepsilon}, W]$ the probability law on $\mathfrak{X}_{T}$. Note, tightness on $\mathcal{L}_T[\varrho_0,\varrho_0\vec{u}_0, S_0,\varrho_{\varepsilon},\varrho_{\varepsilon}\vec{u}_{\varepsilon}, S_{\varepsilon},\vec U_{\varepsilon}, P_{\varepsilon}, C_{\varepsilon}, Q_{\varepsilon}, W]$  for  any $T>0$ implies tightness of $\mathcal{L}[\varrho_0,\varrho_0\vec{u}_0, S_0,\varrho_{\varepsilon},\varrho_{\varepsilon}\vec{u}_{\varepsilon}, S_{\varepsilon},\vec U_{\varepsilon}, P_{\varepsilon}, C_{\varepsilon}, Q_{\varepsilon}, W]$ on $\mathfrak{X}=\cap_T\mathfrak{X}_T$. For $\varrho \vec{u}$, we fix $T>0$ and consider the ball $B_{R_1}$ in the space

\[
C^{\alpha}([0,T];W^{-3,2}(\T))\cap L^{\infty} (L^{\frac{2\gamma}{\gamma +1}}(\T)).
\]

In view of (\ref{priori}), (\ref{estimate}), and using Markov inequality for the complement $B_{R_1}^{C}$ we deduce that

\begin{eqnarray*}
\mu_{(\varrho \vec u )_{\varepsilon}}(\BB_{R_1}^{C})&=&\p \left(\|\varrho \vec{u}_{\varepsilon}\|_{C^{\alpha}([0,T];W^{-3,2}(\T))}  + \|\varrho \vec{u}_{\varepsilon}\|_{L^{\infty} (L^{\frac{2\gamma}{\gamma +1}}(\T))}\geq R\right)\\
&\leq&\frac{\E}{R_1}\left(\|\varrho \vec{u}_{\varepsilon}\|_{C^{\alpha}([0,T];W^{-3,2}(\T))}  + \|\varrho \vec{u}_{\varepsilon}\|_{L^{\infty} (L^{\frac{2\gamma}{\gamma +1}}(\T))}\right)\\
&\leq& \frac{C}{R_1}.
\end{eqnarray*}

Therefore, for a fixed $\eta>0$ we find $R_1(\eta)$ with 
\[
\mu_{(\varrho \vec{u})_{\varepsilon}}(\BB_{R_1}) \geq 1 -\eta.
\]
Hence, the law $\mu_{(\varrho \vec{u})_{\varepsilon}}$ is tight. Using similar arguments as shown above, we deduce that the laws:  $[\mu_{\varrho_\varepsilon},\mu_{ S_\varepsilon},\mu_{\vec U_\varepsilon}] $ are tight.

\begin{prop}
The law $\mu_{C_\varepsilon}$ is tight.
\end{prop}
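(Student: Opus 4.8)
The plan is to exploit that $C_\varepsilon := \varrho_\varepsilon\vec u_\varepsilon\otimes\vec u_\varepsilon$ is, $\p$-a.s., a matrix-valued function bounded in $L^\infty(0,T;L^1(\T;\R^{3\times 3}))$ uniformly in $\varepsilon$, so that \emph{no} time-regularity is needed here: balls in the weak-$*$ topology of $\mathscr{X}_{\mathrm{C}}=L^\infty(0,T;\mathcal{M}^{+}(\T,\R^{3\times 3}))$ are already compact, in contrast to the Aubin--Lions type argument used above for $\mu_{(\varrho\vec u)_\varepsilon}$. First I would record that, from the list of uniform bounds established above (the line $\tfrac{\vec m_\varepsilon\otimes\vec m_\varepsilon}{\varrho}\in L^p(\Omega;L^\infty([0,T];L^1(\T)))$), one has in particular
\[
\E\Big[\,\|C_\varepsilon\|_{L^\infty(0,T;L^1(\T;\R^{3\times 3}))}\,\Big]\le C,
\]
uniformly in $\varepsilon$. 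Since $\varrho_\varepsilon>0$ and $\vec u_\varepsilon\otimes\vec u_\varepsilon$ is pointwise positive semidefinite, $C_\varepsilon(t)$ defines for a.e.\ $t$ a nonnegative matrix-valued Radon measure on $\T$ whose total variation is comparable to $\int_\T\mathrm{tr}\,C_\varepsilon(t)\dx=\int_\T\varrho_\varepsilon|\vec u_\varepsilon|^2\dx$; hence $\|C_\varepsilon\|_{L^\infty(0,T;\mathcal{M}^{+})}\lesssim\|C_\varepsilon\|_{L^\infty(0,T;L^1(\T))}$ and the same uniform expectation bound holds for the $\mathscr{X}_{\mathrm{C}}$-norm.

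Next I would invoke the functional-analytic fact that $\mathscr{X}_{\mathrm{C}}$, with its weak-$*$ topology, is the dual of the separable Banach space $L^1(0,T;C(\T;\R^{3\times 3}))$. By Banach--Alaoglu together with separability of this predual, the closed balls
\[
\mathcal{B}_R:=\Big\{\nu\in L^\infty_{w*}(0,T;\mathcal{M}^{+}(\T,\R^{3\times 3})):\ \|\nu\|_{L^\infty(0,T;\mathcal{M})}\le R\Big\}
\]
are metrizable and compact in the weak-$*$ topology of $\mathscr{X}_{\mathrm{C}}$, hence they are the relevant compact sets for tightness. Along the way one should also note that $\omega\mapsto C_\varepsilon(\omega)$ is Borel measurable into $\mathscr{X}_{\mathrm{C}}$, which again follows from the separability of the predual, so that the law $\mu_{C_\varepsilon}$ is well defined.

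The tightness then follows by a Chebyshev/Markov argument exactly parallel to the one carried out above for $\mu_{(\varrho\vec u)_\varepsilon}$: given $\eta>0$,
\[
\mu_{C_\varepsilon}\big(\mathcal{B}_R^{\,c}\big)=\p\big(\|C_\varepsilon\|_{L^\infty(0,T;\mathcal{M})}>R\big)\le\frac{1}{R}\,\E\big[\|C_\varepsilon\|_{L^\infty(0,T;\mathcal{M})}\big]\le\frac{C}{R},
\]
so choosing $R=R(\eta)$ with $C/R\le\eta$ yields $\mu_{C_\varepsilon}(\mathcal{B}_R)\ge 1-\eta$ for every $\varepsilon$, i.e.\ $\{\mu_{C_\varepsilon}\}$ is tight. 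The place where care is required is not the probabilistic estimate --- which is immediate from the energy bound --- but the soft functional analysis: confirming that bounded balls of $L^\infty_{w*}(0,T;\mathcal{M}^{+}(\T,\R^{3\times 3}))$ are genuinely compact and metrizable and that $C_\varepsilon$ is measurable into this space; both hinge on separability of the predual $L^1(0,T;C(\T;\R^{3\times 3}))$, which is also the reason Jakubowski's version of the Skorokhod theorem is needed later for the full (Quasi-Polish) product path space. The identical reasoning handles $\mu_{\mathrm{P}_\varepsilon}$, $\mu_{\mathrm{Q}_\varepsilon}$ and $\mu_{\mathrm{Q}_\varepsilon}$ from the corresponding $L^\infty(0,T;L^1(\T))$ bounds, and $\mu_{\mathrm{Q}_\varepsilon}$ for the Young-measure component $\mathcal{V}_{t,x}$ is even simpler since $\mathcal{P}(A)$-valued maps automatically have norm one.
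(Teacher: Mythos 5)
Your proposal is correct and follows essentially the same route as the paper: a Chebyshev/Markov estimate on the complement of a weak-$*$ compact ball in $L^\infty(0,T;\mathcal{M}^{+}(\T,\R^{3\times 3}))$, using the uniform bound $\E\|\vec m_\varepsilon\otimes\vec m_\varepsilon/\varrho_\varepsilon\|_{L^\infty(0,T;L^1(\T))}\le C$ from Proposition \ref{propE}. The additional remarks on metrizability/measurability via separability of the predual are sound elaborations of what the paper leaves implicit.
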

\begin{proof}
 We consider a ball $B_{R} \in L^{\infty}(0,T;\mathcal{M}^{+}(\T,\R^{3\times3}))$ that is relatively compact with respect to weak-* topology. Now taking the complement ( $B_{R}^{c}$) of the ball and using Markov-inequality we deduce
 \begin{eqnarray*}
 \mathcal{L}[C_{\varepsilon}]( B_{R}^{c}) &=& \p\left( \int_{0}^{T}\int_{\T}\,\dd |C_{\varepsilon}|\dd t > R\right)\\
 &=&\p\left( \int_{0}^{T}\int_{\T}\left|\frac{\vec m_{\varepsilon} \otimes \vec m_{\varepsilon}}{\varrho_\varepsilon}\right|\,\dd x \dd t > R \right)\\
 &\leq& \frac{1}{R}\E\left\|\frac{\vec m_{\varepsilon} \otimes \vec m_{\varepsilon}}{\varrho_\varepsilon}\right\|_{L^{\infty}(0,T;L^1(\T))}\leq \frac{C}{R},
 \end{eqnarray*}
 where the last line follows from Proposition \ref{propE}. Therefore, for a fixed $\eta>0$ we find $R(\eta)$ with 
\[
\mathcal{L}[C_{\varepsilon}]( B_{R}) \geq 1 -\eta.
\]
The proof is complete.
\end{proof}
Similarly, arguing as shown above, the laws: 
$\mu_{\mathrm{P}_{\varepsilon}}$ and $\mu_{\mathrm{Q}_{\varepsilon}}$ are tight.
The laws $\mu_{W}, \mu_{\varrho_0}, \mu_{\varrho_0\vec u_0}$ and $\mu_{ S_0}$ are tight since they are Radon measures on the Polish spaces. Therefore, we can infer that the law $\mathcal{L}_{T}[\varrho_0,\varrho_0\vec{u}_0,S_0,\varrho_{\varepsilon},\varrho_{\varepsilon}\vec{u}_{\varepsilon}, S_{\varepsilon},\vec U_{\varepsilon},P_{\varepsilon},C_{\varepsilon},Q_{\varepsilon}, W]$ is a sequence of tight measures on $(\mathfrak{X}_{T})$.{ Consequently, its weak-* limit is tight as well and hence a Radon measure}. Since $T$ was arbitrary chosen we deduce that $\mathcal{L}[\varrho_0,\varrho_0\vec{u}_0,S_0,\varrho_{\varepsilon},\varrho_{\varepsilon}\vec{u}_{\varepsilon}, S_{\varepsilon},\vec U_{\varepsilon},P_{\varepsilon},C_{\varepsilon},Q_{\varepsilon}, W]$ is tight on $\mathfrak{X}$.  In view of the Jakubowksi's version of Skorokhod representation theorem \cite{Jaku} (see also Brzezniak et al.\cite{Brzez}), we have the following proposition.

\begin{prop} \label{skorokhod}
There exists a nullsequence $(\varepsilon_m)_{m \in \N}$, a complete probability space  $(\Tilde{\Omega}, \Tilde{\FF},\Tilde{\p})$ with $(\mathfrak{X},\mathscr{B}_{\mathfrak{X}})$-valued random variables \\ $(\tilde{\varrho}_{0\varepsilon_m},\tilde{\varrho}_{0,\varepsilon_m}\tilde{\vec{u}}_{0,\varepsilon_m},\tilde{ S}_{0,\varepsilon_m},\tilde{\varrho}_{\varepsilon_m},\tilde{\varrho}_{\varepsilon_m}\tilde{\vec{u}}_{\varepsilon_m},\tilde{ S}_{\varepsilon_m},\tilde{\vec U}_{\varepsilon_m}, \tilde{P}_{\varepsilon_m}, \tilde{C}_{\varepsilon_m},\tilde{Q}_{\varepsilon_m}, \tilde{W}_{\varepsilon_m})$, $m \in \N$,\\ and $(\tilde{\varrho}_0,\tilde{\varrho}_0\tilde{\vec{u}}_0,\tilde{S}_0,\tilde{\varrho},\tilde{\varrho}\tilde{\vec{u}}, \tilde{ S},\tilde{\vec U},\tilde{P},\tilde{C},\tilde{Q}, \tilde{W})$ such that
\begin{itemize}
    \item [(a)] For all $m \in \N$ the law of $(\tilde{\varrho}_{0\varepsilon_m},\tilde{\varrho}_{0,\varepsilon_m}\tilde{\vec{u}}_{0,\varepsilon_m},\tilde{ S}_{0,\varepsilon_m},\tilde{\varrho}_{\varepsilon_m},\tilde{\varrho}_{\varepsilon_m}\tilde{\vec{u}}_{\varepsilon_m},\tilde{ S}_{\varepsilon_m},\tilde{\vec U}_{\varepsilon_m}, \tilde{P}_{\varepsilon_m}, \tilde{C}_{\varepsilon_m},\tilde{Q}_{\varepsilon_m}, \tilde{W}_{\varepsilon_m})$ on $\mathfrak{X}$ is given by (coincides with) $\mathcal{L}[\varrho_0,\varrho_0\vec{u}_0, S_0,\varrho_{\varepsilon},\varrho_{\varepsilon}\vec{u}_{\varepsilon},S_{\varepsilon},\vec U_{\varepsilon},P_{\varepsilon},C_{\varepsilon},Q_{\varepsilon}, W];$ 
    \item[(b)] The law of $(\tilde{\varrho}_0,\tilde{\varrho}_0\tilde{\vec{u}}_0,\tilde{S}_0,\tilde{\varrho},\tilde{\varrho}\tilde{\vec{u}}, \tilde{ S},\tilde{\vec U},\tilde{P},\tilde{C},\tilde{Q}, \tilde{W})$ is a Radon measure on $(\mathfrak{X},\mathscr{B}_{\mathfrak{X}});$
    \item[(c)] $(\tilde{\varrho}_{0\varepsilon_m},\tilde{\varrho}_{0,\varepsilon_m}\tilde{\vec{u}}_{0,\varepsilon_m},\tilde{ S}_{0,\varepsilon_m},\tilde{\varrho}_{\varepsilon_m},\tilde{\varrho}_{\varepsilon_m}\tilde{\vec{u}}_{\varepsilon_m},\tilde{ S}_{\varepsilon_m},\tilde{\vec U}_{\varepsilon_m}, \tilde{P}_{\varepsilon_m}, \tilde{C}_{\varepsilon_m},\tilde{Q}_{\varepsilon_m}, \tilde{W}_{\varepsilon_m})$, $m \in \N$, converges $\tilde{\p}$-almost surely to  $(\tilde{\varrho}_0,\tilde{\varrho}_0\tilde{\vec{u}}_0,\tilde{S}_0,\tilde{\varrho},\tilde{\varrho}\tilde{\vec{u}}, \tilde{ S},\tilde{\vec U},\tilde{P},\tilde{C},\tilde{Q}, \tilde{W})$ in the topology of $\mathfrak{X}$, i.e.
    \begin{equation}
        \begin{cases}
        \tilde{\varrho}_{0,\varepsilon_m} \to \tilde{\varrho}_0 \,\, \text{in}\, L^{\gamma}(\T), \\
        \tilde{\varrho}_{0,\varepsilon_m}\tilde{\vec{u}}_{0,\varepsilon_m} \to \tilde{\varrho}_{0}\tilde{\vec{u}}_{0} \, \, \text{in}\, \, L^{\frac{2\gamma}{\gamma+1}}(\T),\\
        \tilde{ S}_{0,\varepsilon_m} \to \tilde{S_0} \,\, \text{in }\,\, C([0,T];W^{-4,2})\cap C_w(0,T; L^{\gamma}(\T)),\\
        \tilde{\varrho}_{\varepsilon_m} \to \tilde{\varrho} \,\, \text{in }\,\, C([0,T];W^{-4,2})\cap C_w(0,T; L^{\gamma}(\T)),\\
        \tilde{\vec S}_{\varepsilon_m} \to \tilde{\vec S} \,\, \text{in }\,\, C([0,T];W^{-4,2})\cap C_w(0,T; L^{\gamma}(\T)),\\
        \tilde{\vec U}_{\varepsilon_m} \to \tilde{\vec 0} \,\, \text{in }\,\, L^2([0,T];W^{3,2}(\T))),\\
        \tilde{\varrho}_{\varepsilon_m}\tilde{\vec{u}}_{\varepsilon_m} \to \tilde{\varrho}\tilde{\vec{u}} \, \, \text{in}\, \, C([0,T];W^{-4,2}(\T))\cap C_w (L^{\frac{2\gamma}{\gamma +1}}(\T)),\\
        \tilde{P}_{\varepsilon_m} \to \overline{\tilde{P}} \,\, \text{in} \,\, L_{w^*}^{\infty}(0,T;\mathcal{M}^{+}(\T,\R),\\
        \tilde{C}_{\varepsilon_m} \to \overline{\tilde{C}} \,\, \text{in} \,\, L_{w^*}^{\infty}(0,T;\mathcal{M}{+}(\T,\R^{3\times 3}), \\
        \tilde{Q}_{\varepsilon_m} \to \overline{\tilde{Q}} \,\, \text{in} \,\, L_{w^*}^{\infty}(Q;\mathcal{P}(A)), \\
         \tilde{W}_{\varepsilon_m} \to \tilde{W} \,\, \text{in} \,\, C([0,T];\UU_0),
        \end{cases}
    \end{equation}
    $\tilde{\p}$-a.s.
\end{itemize}
\end{prop}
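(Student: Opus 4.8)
The plan is to set up Jakubowski's variant of the Skorokhod representation theorem \cite{Jaku} on the product path space $\mathfrak{X}=\bigcap_{T>0}\mathfrak{X}_T$ and then to feed in the tightness established above. The first task is to check that $\mathfrak{X}$ (equivalently every $\mathfrak{X}_T$) is quasi-Polish, i.e.\ carries a countable family of continuous real-valued functions separating points, which is precisely the structural hypothesis of \cite{Jaku}. The factors $\mathscr{X}_{\varrho_0}$, $\mathscr{X}_{\vec m_0}$, $\mathscr{X}_{S_0}$, $\mathscr{X}_{\vec U}=L^2(0,T;W^{3,2}(\T))$ and $\mathscr{X}_{W}=C([0,T];\UU_0)$ are separable Banach spaces, hence Polish, and the requirement is automatic for them. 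For the remaining factors — the ``continuous in time, weak in space'' spaces $C([0,T];W^{-4,2}(\T))\cap C_w([0,T];L^{q}(\T))$ carrying $\varrho$, $\vec m$, $S$, and the weak-$(*)$ spaces $L^{\infty}_{w*}(0,T;\mathcal{M}^+(\T,\R^{3\times 3}))$, $L^{\infty}_{w*}(0,T;\mathcal{M}^+(\T,\R))$, $L^{\infty}_{w*}(Q;\mathcal{P}(A))$ carrying $C_\varepsilon$, $P_\varepsilon$, $Q_\varepsilon$ — one produces the separating family explicitly: for a $C_w$-factor one takes $f\mapsto\sup_{t\in[0,T]}\big|\int_{\T}f(t)\,\psi_j\,\dd x\big|$ together with $f\mapsto\int_0^T\chi_k(t)\int_{\T}f(t)\,\psi_j\,\dd x\,\dd t$ for $(\psi_j)$ dense in the (separable) predual and $(\chi_k)$ dense in $C([0,T])$, and for an $L^{\infty}_{w*}$-factor one takes $\mu\mapsto\int_0^T\chi_k(t)\,\langle\mu(t),\Psi_j\rangle\,\dd t$ with $(\Psi_j)$ dense in the corresponding separable test-function space (and the analogous countable family built from a dense subset of the relevant separable space of test integrands for $L^{\infty}_{w*}(Q;\mathcal{P}(A))$); this is exactly the construction used in \cite{Brt_1,Brzez,FrBrHo}. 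Collecting the families of the finitely many factors yields one for the product, so $\mathfrak{X}$ is quasi-Polish.

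With $\mathfrak{X}$ quasi-Polish, I invoke the tightness proved above: the family $\mathcal{L}_T[\varrho_0,\varrho_0\vec u_0,S_0,\varrho_\varepsilon,\varrho_\varepsilon\vec u_\varepsilon,S_\varepsilon,\vec U_\varepsilon,P_\varepsilon,C_\varepsilon,Q_\varepsilon,W]$ is tight on $\mathfrak{X}_T$ for every $T>0$, hence $\mathcal{L}[\varrho_0,\dots,W]$ is tight on $\mathfrak{X}$. Jakubowski's theorem then furnishes a subsequence $(\varepsilon_m)$, a complete probability space $(\tilde\Omega,\tilde\FF,\tilde\p)$, and $\mathfrak{X}$-valued random variables $(\tilde\varrho_{0,\varepsilon_m},\dots,\tilde W_{\varepsilon_m})$ and $(\tilde\varrho_0,\dots,\tilde W)$ such that the $\varepsilon_m$-tuple has the same law on $\mathfrak{X}$ as the original one — this is item (a); the law of the limiting tuple is inner regular with respect to compact sets, hence a Radon measure on $(\mathfrak{X},\mathscr{B}_{\mathfrak{X}})$ — item (b); and the $\varepsilon_m$-tuple converges $\tilde\p$-a.s.\ to the limiting tuple in the topology of $\mathfrak{X}$. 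Since that topology is by construction the product topology of the factors, a.s.\ convergence in $\mathfrak{X}$ is equivalent to a.s.\ convergence in each factor, and reading this off component by component gives precisely the list in item (c). The component carrying $\vec U_\varepsilon=\sqrt{\varepsilon}\,\vec u_\varepsilon$ is handled in the same way; it is this variable that later permits the passage to the limit in the artificial viscosity term, which in the weak formulation carries an extra factor $\sqrt{\varepsilon_m}\to 0$.

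The only genuinely non-routine step is the first one: verifying that each ``weak'' factor of $\mathfrak{X}$ is quasi-Polish, namely writing down the countable point-separating families of continuous functionals on $C_w([0,T];L^q(\T))$ and on the $L^{\infty}_{w*}$-spaces of measure-valued maps, and checking that Borel measurability and the inner-regularity (Radon) property persist in this non-metrizable setting — this is exactly why Jakubowski's theorem, rather than the classical Skorokhod theorem, is needed here. Everything afterwards is the standard mechanism ``tightness $+$ Jakubowski $\Rightarrow$ almost surely convergent representatives'', with items (a) and (b) being direct outputs of the theorem and (c) obtained by unravelling the product topology.
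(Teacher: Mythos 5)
Your proposal is correct and follows essentially the same route as the paper, which simply derives the proposition from the tightness of $\mathcal{L}_T[\varrho_0,\dots,W]$ on each $\mathfrak{X}_T$ together with Jakubowski's theorem on quasi-Polish spaces (citing \cite{Jaku} and \cite{Brzez}); your explicit verification that the weak and weak-$(*)$ factors carry countable point-separating families of continuous functionals is the standard detail the paper leaves implicit.
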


To guarantee adaptedness of random variables and to ensure that the stochastic integral continues to hold in the new probability space we introduce filtration for correct measurability. We simplify notation as follows, set
\[
\mathcal{X}_0: =\left[\tilde{\varrho}_0,\tilde{\rho}_0\tilde{\vec{u}}_0,\tilde{ S}_0\right], \mathcal{X}:=\left[\tilde{\varrho},\tilde{\varrho}\tilde{\vec{u}},\tilde{ S},\tilde{\vec U},\right].
\]
Let $\tilde{\FF}_t$ and $\tilde{\FF}_{t}^{\varepsilon_m}$ be the $\tilde{\p}$-augmented filtration of random variables $(\tilde{\varrho}_0,\tilde{\varrho}_0\tilde{\vec{u}}_0,\tilde{S}_0,\tilde{\varrho},\tilde{\varrho}\tilde{\vec{u}}, \tilde{ S},\tilde{\vec U},\tilde{P},\tilde{C}, \tilde{W})$ and 
$(\tilde{\varrho}_{0\varepsilon_m},\tilde{\varrho}_{0,\varepsilon_m}\tilde{\vec{u}}_{0,\varepsilon_m},\tilde{\vec S}_{0,\varepsilon_m},\tilde{\varrho}_{\varepsilon_m},\tilde{\varrho}_{\varepsilon_m}\tilde{\vec{u}}_{\varepsilon_m},\tilde{\vec S}_{\varepsilon_m},\tilde{\vec U}_{\varepsilon_m}, \tilde{P}_{\varepsilon_m}, \tilde{C}_{\varepsilon_m},\tilde{Q}_{\varepsilon_m}, \tilde{W}_{\varepsilon_m})_{m \in \N}$, respectively, i.e.

\begin{eqnarray*}
\tilde{\FF}_t &=& \sigma (\sigma(\mathcal{X}_0,\rr \mathcal{X},\rr\tilde{W}) \cup \sigma_t( \tilde{P},\tilde{C},\tilde{Q})\cup\{ \mathcal{N} \in \tilde{\FF};\tilde{\p}(\mathcal{N})=0\}), t\geq 0,\\
\tilde{\FF}_{t}^{\varepsilon_m}&=&\sigma(\sigma(\mathcal{X}_{0,\varepsilon_m},\rr\mathcal{X}_{\varepsilon_m},\rr\tilde{W}_{\varepsilon_m})\cup \sigma_t (\tilde{P}_{\varepsilon_m},\tilde{C}_{\varepsilon_m},\tilde{Q}_{\varepsilon_m},)
\cup\{ \mathcal{N} \in \tilde{\FF};\tilde{\p}(\mathcal{N})=0\}), t\geq 0.
\end{eqnarray*}

Here $\rr$ denotes the restriction operator to the interval $[0,t]$ on the path space and $\sigma_t$\footnote{The family of $\sigma$-fields $(\sigma_{t}[\vec V])_{t\geq 0}$ given as random distribution history of 
 \begin{equation*}
     \sigma_t[\vec V]:= \bigcap_{s>t}\sigma\left(\bigcup_{\varphi\in C_c^{\infty}(Q;\R^3)}\{\langle \vec V, \varphi \rangle <1 \}\cup \{N\in \FF, \p(N)=0\} \right)
 \end{equation*}
 is called the history of $\vec V$. In fact, any random distribution is adapted to its history, see\cite{FrBrHo} (Chap. 2.2).} denotes the history of a random distribution.

\subsection{The new probability space}

In this section we will use the elementary method from \cite{BZ} to show that the approximated equations hold in the new probability space. The essence of this elementary method is to identify the quadratic and cross variations corresponding to the martingale with limit Wiener process obtained via compactness. Now in view of proposition {\ref{skorokhod}}, we note that $\tilde{W}$ has the same law as $W$. And as result, there exist a collection of independent real-valued $(\tilde{\FF}_t)_{t\geq 0}$ - Wiener process $\tilde{\beta}_{k}^{\varepsilon_m}$ such that $\tilde{W}^N = \sum_{k}\tilde{\beta}_{k}^{\varepsilon_m}e_k.$  To be specific, there exist a collection of independent real-valued $(\tilde{\FF}_t)_{t\geq 0}$ - Wiener process $\tilde{\beta}_{k}$ such that $\tilde{W} = \sum_{k}\tilde{\beta}_{k}e_k$. For all $t \in [0,T]$ and $\varphi \in C_{c}^{\infty}(\T) $ define the functionals:

\begin{eqnarray*}
\mathscr{M}^{\varepsilon_m}(\varrho_{0},\vec m_{0}, \varrho,\vec m,\vec U, C,P)_t &=&\int_{\T}(\vec{m} -\vec{m}_{0}) \cdot \varphi \, \dd x - \int_{0}^{t}\int_{\T}\nabla \varphi\, \dd C \dd s\\
&&  \sqrt{\varepsilon_m} \int_{0}^{t}\int_{\T} \nabla \Delta  \vec{U}\cdot\nabla \Delta \varphi \, \dd x \dd s -\sqrt{\varepsilon_m}\int_{0}^{t}\vec U\varphi\,\dd x\dd s\\
&&-\int_{0}^{t}\int_{\T} \mathrm{div} \varphi \, \dd P \dd s,\\
\end{eqnarray*}
\[
\Psi_t =\sum_{k=1}\int_{0}^{t}\left(\int_{\T}\varrho\phi_{e_k}\cdot \boldsymbol{\varphi} \ \text{d}x\right)^2 \ \text{d}s,
\]

\[
(\Psi_k)_t =\int_{0}^{t}\int_{\T}\varrho\phi_{e_k}\cdot \boldsymbol{\varphi} \ \text{d}x \text{d}s.
\]

Now, let $\mathscr{M}^{\varepsilon_m}(\tilde{\varrho}_{0,\varepsilon_m},\tilde{\vec m}_{0,\varepsilon_m}, \tilde{\varrho}_{\varepsilon_m},\tilde{\vec m}_{\varepsilon_m},\tilde{\vec U}_{\varepsilon_m}, \tilde{C}_{\varepsilon_m},\tilde{P}_{\varepsilon_m})_{s,t}$ denote the increment\\ $\mathscr{M}^{\varepsilon_m}(\tilde{\varrho}_{0,\varepsilon_m},\tilde{\vec m}_{0,\varepsilon_m}, \tilde{\varrho}_{\varepsilon_m},\tilde{\vec m}_{\varepsilon_m},\tilde{\vec U}_{\varepsilon_m}, \tilde{C}_{\varepsilon_m},\tilde{P}_{\varepsilon_m})_{t}-\mathscr{M}^{\varepsilon_m}(\tilde{\varrho}_{0,\varepsilon_m},\tilde{\vec m}_{0,\varepsilon_m}, \tilde{\varrho}_{\varepsilon_m},\tilde{\vec m}_{\varepsilon_m},\tilde{\vec U}_{\varepsilon_m}, \tilde{C}_{\varepsilon_m},\tilde{P}_{\varepsilon_m})_{s}$ and similarly for $\Psi_{s,t}$ and 
$(\Psi_k)_{s,t}$. In the new probability space, completeness of proof follows from showing that 

\begin{equation}
\mathscr{M}^{\varepsilon_m}(\tilde{\varrho}_{0,\varepsilon_m},\tilde{\vec m}_{0,\varepsilon_m}, \tilde{\varrho}_{\varepsilon_m},\tilde{\vec m}_{\varepsilon_m},\tilde{\vec U}_{\varepsilon_m}, \tilde{C}_{\varepsilon_m},\tilde{P}_{\varepsilon_m})_{t} = \int_{0}^{t}\int_{\T} \tilde{\varrho}_{\varepsilon_m} \phi \cdot \boldsymbol{\varphi} \ \mathrm{d}x\mathrm{d}\tilde{W}_{s}^{\varepsilon_m}.
\label{DS}
\end{equation}

For (\ref{DS}) to hold, it  suffices  to show that $\mathscr{M}^{\varepsilon_m}(\tilde{\varrho}_{0,\varepsilon_m},\tilde{\vec m}_{0,\varepsilon_m}, \tilde{\varrho}_{\varepsilon_m},\tilde{\vec m}_{\varepsilon_m},\tilde{\vec U}_{\varepsilon_m}, \tilde{C}_{\varepsilon_m},\tilde{P}_{\varepsilon_m})_{t}$ is an  $(\FF_t^{\varepsilon_m})_{t\geq 0}$-martingale process and its  corresponding quadratic and cross variations satisfy, respectively,

\begin{equation}\label{variations}
 \bigg \langle \bigg \langle \mathscr{M}^{\varepsilon_m}(\tilde{\varrho}_{0,\varepsilon_m},\tilde{\vec m}_{0,\varepsilon_m}, \tilde{\varrho}_{\varepsilon_m},\tilde{\vec m}_{\varepsilon_m},\tilde{\vec U}_{\varepsilon_m}, \tilde{C}_{\varepsilon_m},\tilde{P}_{\varepsilon_m}) \bigg \rangle \bigg\rangle =\Psi ,
\end{equation}

\begin{equation}\label{variations_1}
    \bigg\langle \bigg\langle \mathscr{M}^{\varepsilon_m}(\tilde{\varrho}_{0,\varepsilon_m},\tilde{\vec m}_{0,\varepsilon_m}, \tilde{\varrho}_{\varepsilon_m},\tilde{\vec m}_{\varepsilon_m},\tilde{\vec U}_{\varepsilon_m}, \tilde{C}_{\varepsilon_m},\tilde{P}_{\varepsilon_m}), \tilde{\beta}_k\bigg \rangle \bigg \rangle = \Psi_k,
\end{equation}

and consequently

\begin{equation}\label{cross_var}
\bigg \langle \bigg \langle \mathscr{M}^{\varepsilon_m}(\tilde{\varrho}_{0,\varepsilon_m},\tilde{\vec m}_{0,\varepsilon_m}, \tilde{\varrho}_{\varepsilon_m},\tilde{\vec m}_{\varepsilon_m},\tilde{\vec U}_{\varepsilon_m}, \tilde{C}_{\varepsilon_m},\tilde{P}_{\varepsilon_m}) -\int_{0}^{t}\int_{\T} \tilde{\varrho}_{\varepsilon_m}\phi \cdot \boldsymbol{\varphi} \ \mathrm{d}x\mathrm{d}\tilde{W}_{s}^{\varepsilon_m} \bigg \rangle \bigg\rangle = 0,    
\end{equation}
which implies the desired equation on the new probability space. We note that (\ref{variations}) and (\ref{variations_1}) hold based on the following observation: the mapping
\[
(\varrho_{0},{\vec m}_{0}, {\varrho},{\vec m},{\vec U}, {C},{P}) \mapsto \mathscr{M}(\varrho_{0},{\vec m}_{0}, {\varrho},{\vec m},{\vec U}, {C},{P})_t
\]
is {well-defined} and continuous on the path space. Using proposition \ref{skorokhod} we infer that 

\[
\mathscr{M}^{\varepsilon_m}(\varrho_{0,\varepsilon_m},{\vec m}_{0,\varepsilon_m}, {\varrho}_{\varepsilon_m},{\vec m}_{\varepsilon_m},{\vec U}_{\varepsilon_m}, {C}_{\varepsilon_m},{P}_{\varepsilon_m}) \sim^d \mathscr{M}^{\varepsilon_m}(\tilde{\varrho}_{0,\varepsilon_m},\tilde{\vec m}_{0,\varepsilon_m}, \tilde{\varrho}_{\varepsilon_m},\tilde{\vec m}_{\varepsilon_m},\tilde{\vec U}_{\varepsilon_m}, \tilde{C}_{\varepsilon_m},\tilde{P}_{\varepsilon_m}).
\]

Fixing times $s,t \in [0,T]$, with $s<t$ we consider a continuous function $h$ such that
\[
h:V|_{[0,s]} \to [0,1].
\]
The process
\begin{eqnarray*}
 \mathscr{M}^{\varepsilon_m}(\varrho_{0,\varepsilon_m},{\vec m}_{0,\varepsilon_m}, {\varrho}_{\varepsilon_m},{\vec m}_{\varepsilon_m},{\vec U}_{\varepsilon_m}, {C}_{\varepsilon_m},{P}_{\varepsilon_m})&=& \int_{0}^{t}\int_{\T}\varrho_{\varepsilon_m} \phi  \cdot \boldsymbol{\varphi} \ \text{d}x \text{d}W_{s}^{\varepsilon_m}\\
 &=& \sum_{k=1}\int_{0}^{t}\int_{\T}\varrho_{\varepsilon_m} \phi_{e_k}\cdot  \boldsymbol{\varphi}\ \text{d}x\ \text{d}\beta_k^{\varepsilon_m},
\end{eqnarray*}
is a square integrable $(\FF_t)_{t \geq 0}$-martingale, consequently, we infer

\[
[\mathscr{M}^{\varepsilon_m}(\varrho_{0,\varepsilon_m},{\vec m}_{0,\varepsilon_m}, {\varrho}_{\varepsilon_m},{\vec m}_{\varepsilon_m},{\vec U}_{\varepsilon_m}, {C}_{\varepsilon_m},{P}_{\varepsilon_m})]^2 - \Psi,\]
\[ \mathscr{M}^{\varepsilon_m}(\varrho_{0,\varepsilon_m},{\vec m}_{0,\varepsilon_m}, {\varrho}_{\varepsilon_m},{\vec m}_{\varepsilon_m},{\vec U}_{\varepsilon_m}, {C}_{\varepsilon_m},{P}_{\varepsilon_m})\beta_k -  \Psi_k,
\]
are $(\FF_t)_{t \geq 0}$-martingales. Now we set
\[
\vec X:=[\varrho_{0},{\vec m}_{0}, {\varrho},{\vec m},{\vec U}, {C},{P}], \qquad {\vec X}_{\varepsilon_m}: =[{\varrho}_{0,\varepsilon_m},{\vec m}_{0,\varepsilon_m}, {\varrho}_{\varepsilon_m},{\vec m}_{\varepsilon_m},{\vec U}_{\varepsilon_m}, {C}_{\varepsilon_m},{P}_{\varepsilon_m}],
\]
and
\[
\tilde{\vec X}:=[\tilde{\varrho}_{0},\tilde{{\vec m}}_{0}, \tilde{\varrho},\tilde{\vec m},\tilde{\vec U}, \tilde{C},\tilde{P}], \qquad \tilde{\vec X}_{\varepsilon_m}: =[\tilde{\varrho}_{0,\varepsilon_m},\tilde{\vec m}_{0,\varepsilon_m}, \tilde{\varrho}_{\varepsilon_m},\tilde{\vec m}_{\varepsilon_m},\tilde{\vec U}_{\varepsilon_m}, \tilde{C}_{\varepsilon_m},\tilde{P}_{\varepsilon_m}].
\]
 Let $\vec{r}_s$ be a restriction  function  to the interval $[0,s]$.   In view of proposition \ref{skorokhod} and the equality of laws we obtain:
 
 \begin{equation}\label{eq:hmA}
\tilde{\E}\bigg[h(\vec{r}_s\tilde{\vec{X}}_{\varepsilon_m},\vec{r}_s\tilde{W}^{\varepsilon_m})\mathscr{M}^{\varepsilon_m}(\tilde{\vec{X}}_{\varepsilon_m})_{s,t}={\E}\bigg[h(\vec{r}_s{\vec{X}}_{\varepsilon_m},\vec{r}_s{W}^{\varepsilon_m})\mathscr{M}^{\varepsilon_m}({\vec{X}}_{\varepsilon_m})_{s,t}\bigg]=0
\end{equation}

\begin{equation}\label{eq:hmB}
\tilde{\E}\bigg[h(\vec{r}_s\tilde{\vec{X}}_{\varepsilon_m},\vec{r}_s\tilde{W}^{\varepsilon_m})([\mathscr{M}^{\varepsilon_m}(\tilde{\vec{X}}_{\varepsilon_m})]^2 - \Psi)_{s,t}\bigg]={\E}\bigg[h(\vec{r}_s{\vec{X}}_{\varepsilon_m},\vec{r}_s{W}^{\varepsilon_m})([\mathscr{M}^{\varepsilon_m}({\vec{X}}_{\varepsilon_m})]^2 - \Psi)_{s,t}\bigg]=0
\end{equation}

\begin{equation}\label{eq:hmC}
\tilde{\E}\bigg[h(\vec{r}_s\tilde{\vec{X}}_{\varepsilon_m},\vec{r}_s\tilde{W}^{\varepsilon_m})(\mathscr{M}^{\varepsilon_m}(\tilde{\vec{X}}_{\varepsilon_m})\beta_k -  (\Psi_k))_{s,t})={\E}\bigg[h(\vec{r}_s{\vec{X}}_{\varepsilon_m},\vec{r}_s{W}^{\varepsilon_m})(\mathscr{M}^{\varepsilon_m}({\vec{X}}_{\varepsilon_m})\beta_k -  (\Psi_k))_{s,t})\bigg]=0
\end{equation}

Therefore, (\ref{variations}) and (\ref{variations_1}) hold, and consequently, (\ref{cross_var}) follows. Thus the momentum formulation:
\begin{eqnarray*}
\int_{\T}(\tilde{\vec{m}}_{\varepsilon_m} ) \cdot \varphi \, \dd x  &=&\int_{\T}(\tilde{\vec{m}}_{0,\varepsilon_m}) \cdot \varphi \, \dd x + \int_{0}^{t}\int_{\T}\nabla \varphi\, \dd \tilde{C}_{\varepsilon_m} \dd s\\
&&- \sqrt{\varepsilon_m} \int_{0}^{t}\int_{\T} \nabla \Delta  \tilde{\vec{U}}_{\varepsilon_m}\cdot\nabla \Delta \varphi \, \dd x \dd s -\sqrt{\varepsilon_m}\int_{0}^{t}\tilde{\vec U}_{\varepsilon_m}\varphi\,\dd x \dd s\\
&&
+\int_{0}^{t}\int_{\T}\mathrm{div} \varphi \, \dd \tilde{P}_{\varepsilon_m} \dd s
+\int_{\T}\int_{0}^{t}\tilde{\varrho}_{\varepsilon_m} \phi d\tilde{W}_{s}^{\varepsilon_m} \cdot \varphi \, \dd x,\\
\end{eqnarray*}

holds $\tilde{\p}$-a.s in new probability space $(\Tilde{\Omega}, \Tilde{\FF},\Tilde{\p})$. We note that, the terms in the continuity equation and entropy balance are continuous on the path-space and as such, both equations continue to hold on the new probability space $\tilde{\p}$-a.s as well.

\subsection{Passage to the limit}

To identify the limits in the nonlinear terms we first introduce defect measures. For this,  we adopt notion of measures as presented in \cite{DBrt}.  In view of Proposition \ref{skorokhod}  we have
\[
p(\tilde{\varrho}_{\varepsilon_m},\tilde{S}_{\varepsilon_m})\to \overline{p(\tilde{\varrho}, \tilde{S})}\,\,\text{weakly-(*) in} \,\, L^{\infty}(0,T;\mathcal{M}^{+}(\T,\R) .
\]
 Noting that $p(\tilde{\varrho}, \tilde{S}) = \tilde{\varrho}^{\gamma}\exp{\left(\frac{\tilde{\vec S}}{c_v\tilde{\varrho}}\right)}$ is a convex functional, we deduce
\[
0\leq p(\tilde{\varrho}, \tilde{S}) \leq \overline{p(\tilde{\varrho}, \tilde{S})},\quad \tilde{\mathcal{R}}_{\mathrm{press}}\equiv \overline{p(\tilde{\varrho}, \tilde{S})}- p(\tilde{\varrho},\tilde{S}) \in L^{\infty}(0,T;\mathcal{M}^{+}(\T,\R) .
\]

Arguing similarly for the convective term,
\[
  \frac{\tilde{\Vec{m}_{\varepsilon_m}} \otimes \tilde{\vec{m}}_{\varepsilon_m}}{\tilde{\varrho}_{\varepsilon_m}}\to \overline{\frac{\tilde{\Vec{m}} \otimes \tilde{\vec{m}}}{\tilde{\varrho}}} \,\,\text{weakly-(*) in} \,\, L^{\infty}(0,T;\mathcal{M}^{+}(\T,\R^{3\times3}),
 \]
setting
 \[
 \tilde{\mathcal{R}}_{\mathrm{conv}} \equiv \overline{\frac{\tilde{\Vec{m}} \otimes \tilde{\vec{m}}}{\tilde{\varrho}}} - \frac{\tilde{\Vec{m}} \otimes \tilde{\vec{m}}}{\tilde{\varrho}},
 \]
 for $\xi \in \R^3$, convexity implies  
 \begin{eqnarray*}
 \tilde{\mathcal{R}}_{\mathrm{conv}}:(\xi \otimes \xi)&=&\lim_{\varepsilon_m\to 0}\left[\frac{\tilde{\vec m}_{\varepsilon_m}\otimes \tilde{\vec m}_{\varepsilon_m}}{\rho_{\varepsilon_m}}:(\xi \otimes \xi)\right]- \frac{\tilde{\Vec{m}} \otimes \tilde{\vec{m}}}{\varrho}:(\xi \otimes \xi)\\
 &=&\lim_{\varepsilon_m \to 0}\left[\frac{|\tilde{\vec m}_{\varepsilon_m}\cdot \xi|^2}{\tilde{\varrho}_{\varepsilon_m}} -\frac{|\tilde{\vec m}\cdot \xi|^2}{\tilde{\varrho}} \right] \geq 0,
 \end{eqnarray*}
 so that $\tilde{\mathcal{R}}_{\mathrm{conv}} \in L^{\infty}(0,T;\mathcal{M}^{+}(\T,\R^{3\times3}))$.
To perform the stochastic limit term we use Lemma 2.1 in \cite{Debussche}. On the account of convergences in Proposition \ref{skorokhod}, Lemma 2.1 in \cite{Debussche} and the higher moments from (\ref{eq:hmA})-(\ref{eq:hmC})  we can pass to the limit $\varepsilon_m \to 0$ in the momentum equation in (\ref{eq:Euler}) and obtain

\begin{eqnarray}
\label{m_system}
\tilde{\E}\left[\int_{0}^{T}\int_{\T}\tilde{\vec{m}} \cdot \varphi \, \dd x \dd t \right] &=&\tilde{\E}\Bigg[\int_{0}^{T}\Bigg(\int_{\T}\tilde{\vec{m}}_{0}  \cdot \varphi \, \dd x  +
\int_{0}^{t}\int_{\T}\overline{\tilde{\frac{\Vec{m}_{} \otimes \vec{m}_{}}{\rho_{}}}} : \nabla \varphi\, \dd x \dd s \\
&&
+\int_{0}^{t}\int_{\T}\overline{\tilde{\varrho}^{\gamma}\exp{\left(\frac{\tilde{\vec S}}{c_v\tilde{\varrho}}\right)}} \cdot \mathrm{div} \varphi \, \dd x \dd s +\int_{\T}\int_{0}^{t}\tilde{\varrho} \phi d\tilde{W}_{s}  \cdot \varphi \,\dd x \Bigg) \dd t\Bigg].\nonumber
\end{eqnarray}

Consequently, the momentum equation in the sense of (\ref{eq:mcxs}) follows from rewriting (\ref{m_system}) using defect measures. Similarly, using Proposition \ref{skorokhod} we perform  $\varepsilon_m\to 0$ limit in the mass continuity and total entropy to deduce the equivalence of  (\ref{eq:cont}) and (\ref{eq:entr}) in the new probability space, respectively.

\subsubsection{On the Energy inequality}
Finally, we consider the energy equality. In the original probability space, the approximate system (\ref{eq:Euler}) has an energy equality of the form

\begin{eqnarray*}
E_{t}^{\varepsilon_m}= E_{s}^{\varepsilon_m}+\frac{1}{2}\int_{s}^{t}\|\sqrt{\varrho_{\varepsilon_m}}\phi\|_{L_2(\UU,L^2(\T))}^{2}\,\dd \sigma+ \int_{s}^{t}\int_{\T}\vec m_{\varepsilon_m}\phi \,\dd x\dd W^{\varepsilon_m},
\end{eqnarray*}
$\p$-a.s for a.a $0\leq s<t$, where

\[
E_{t}^{\varepsilon_m} =\int_{\T}\left[\frac{1}{2}\frac{|\vec m_{\varepsilon_m}|^2}{\varrho_{\varepsilon_m}}+ c_v\varrho_{\varepsilon_m}^{\gamma}\exp{\left(\frac{\vec S_{\varepsilon_m}}{c_v\varrho_{\varepsilon_m}}\right)} \right]\, \dd x +\varepsilon_m\int_{0}^{T}((\vec u_{\varepsilon_m},\vec u_{\varepsilon_m}))\, \dd t,
\]
for a.a $t \geq 0$. For any fixed $s$ this is equivalent to

\[
-\int_{s}^{\infty}\partial_t \varphi E_t^{\varepsilon_m} \, \dd t- \varphi(s)E_{s}^{\varepsilon_m} = \frac{1}{2}\int_{s}^{\infty}\varphi \|\sqrt{\varrho_{\varepsilon_m}}\phi\|_{L_2(\UU,L^2(\T))}^{2}\,\dd t+ \int_{s}^{\infty}\varphi\int_{\T}\vec m_{\varepsilon_m} \cdot \phi \,\dd x\dd W^{\varepsilon_m},
\]
$\p$-a.s for all $\varphi\in C_{0}^{\infty}([s,\infty))$. By virtue of Theorem 2.9.1 in \cite{DEH}, and in view of Proposition \ref{skorokhod} the energy equality continues to hold in the new probability space and reads

\[
\tilde{E}_{t}^{\varepsilon_m}=\tilde{E}_{s}^{\varepsilon_m}+\frac{1}{2}\int_{s}^{t}\|\sqrt{\tilde{\varrho}_{\varepsilon_m}}\phi\|_{L_2(\UU,L^2(\T))}^{2}\,\dd \sigma+ \int_{s}^{t}\int_{\T}\tilde{\vec m}_{\varepsilon_m}\phi \,\dd x\dd \tilde{W}^{\varepsilon_m},
\]
$\tilde{\p}$-a.s. for a.a $s$(including $s=0$) and all $t \geq s$. Averaging in $t$ and s, as in \cite{BrMo}, the above expression becomes continuous on the path space.  Furthermore, fixing $s=0$ and we use Lemma 2.1 in \cite {Debussche}, the bounds established in Proposition \ref{skorokhod}, and higher moments to perform the limit $\varepsilon_m\to 0$ and obtain

\begin{equation}\label{eq:newEner}
    \tilde{\mathbf{E}}_t\leq\tilde{\mathbf{E}}_0+\frac{1}{2}\int_{s}^{t}\|\sqrt{\tilde{\varrho}}\phi\|_{L_2(\UU;L^2(\T))}^2\, \dd \sigma + \int_{s}^{t}\int_{\T}\tilde{\vec m} \cdot \phi \, \dd x \dd \tilde{W},
\end{equation}
    $\p$-a.s. for a.a. $t\in [0,T]$, where
    \[
    \tilde{\mathbf{E}}_t= \int_{\T}\left[\frac{1}{2}\frac{|\tilde{\vec m} |^2}{\tilde{\varrho}}+c_v\tilde{\varrho}^{\gamma}\exp{\left(\frac{\tilde{S}}{c_v\tilde{\varrho}}\right)}\right]\, \dd x + \frac{1}{2}\int_{\T}\dd \mathrm{tr}\mathcal{R}_{\text{conv}}(t) +c_v\int_{\T}\dd \mathcal{R}_{\text{press}}(t),
    \]
     and 
    \[
   \tilde{E}_0= \int_{\T}\left[\frac{1}{2}\frac{|\tilde{\vec m}_0 |^2}{\tilde{\varrho}_0}+c_v\tilde{\varrho}_0^{\gamma}\exp{\left(\frac{S_0}{c_v\varrho_0}\right)}\right]\, \dd x.
    \]
Performing the limit $\varepsilon_m\to 0$ yields an energy inequality. Our goal now is to convert (\ref{eq:newEner}) to equality, for this, we argue as in \cite{HoFeBr}. The entropy balance in the approximate system (\ref{eq:Euler}) holds as an equality. Hence, to convert (\ref{eq:newEner}) to equality, it is sufficient to augment  the term contributing to the internal energy ($\mathcal{R}_{\text{press}}(t)$) by $h(t)\dd x$ with suitable spatially homogeneous $h\geq 0$. And $\mathcal{R}_{\text{press}}(t)$ acts on $\mathrm{div}_x\varphi$  in a periodic domain $\T$, therefore,

\[
\int_{\T}h(t)\mathrm{div}_x \varphi\,\dd x=0.
\]
 Finally, for any s we have
 \[
-\int_{s}^{\infty}\partial_t \varphi \tilde{E}_t \, \dd t- \varphi(s)\tilde{E}_{s} = \frac{1}{2}\int_{s}^{\infty}\varphi \|\sqrt{\varrho}\phi\|_{L_2(\UU,L^2(\T))}^{2}\,\dd t+ \int_{s}^{\infty}\varphi\int_{\T}\tilde{\vec m}\cdot \phi \,\dd x\dd W,
\]
$\p$-a.s for all $\varphi\in C_{0}^{\infty}([s,\infty))$.

\section{Weak-strong Uniqueness}\label{WK}

In this section we prove that a stochastic measure-valued martingale solution to (\ref{eq:aEuler})-(\ref{eq:cEuler}) coincides with a strong solution so long as the later exists. In order to do this, we need to introduce a \textit{relative entropy inequality}; a tool that allows us to compare two solutions. In the following analysis, it is more convenient to express the variable $S$ as $\varrho s(\varrho,E)$ where $E =\varrho e(\varrho,\vartheta)$ and to work with new state variables: the density $\varrho$, the momentum $\vec m$ and the internal energy $E$, see \cite{FEJB}.\newline

We begin by stating the following auxiliary results, a variant of (\cite{FrBrHo}, Thm. A.4.1), to which we refer to for more details.

\begin{lem}\label{prod}
Let $q$ be a stochastic process on $(\Omega,\FF,(\FF_t)_{t\geq 0},\p)$ such that for some $\alpha \in \R$,
\[
q \in C_{\rm weak}([0,T]; W^{-\alpha,p}(\T))\cap L^{\infty}(0,\infty; L^{1}(\T))\quad\p\text{-a.s},
\]
\begin{equation}\label{eq:RA}
\E\left[\sup_{t\in [0,T]}\|q\|_{L^1(\T)}^{p}\right]<\infty \quad \text{for all }\, 1\leq p <\infty,
\end{equation}
\begin{equation}\label{eq:RB}
   \dd q = D_{t}^{d}q\,\dd t + \mathbb{D}_t^s q \,\dd W, 
\end{equation}
where $D_t^d$, $\mathbb{D}_t^d$ are progressively measurable with
\begin{equation}\label{eq:RC}
    \begin{split}
    D_t^d q \in L^p(\Omega; L^2(0,T;W^{-\alpha,k}(\T))), \qquad \mathbb{D}_t q \in L^p(\Omega; L^2(0,T;L_2(\UU; W^{-m,2}(\T)))),\\
    \sum_{k\geq 1}\int_{0}^{T}\|\mathbb{D}_t^s q (e_k)\|_1^2\, \dd t\in L^p(\Omega)\quad 1\leq p <\infty,
    \end{split}
\end{equation}
for some $k>1$ and some $m \in \N$.\\
Let $w$ be a stochastic process on $(\Omega,\FF,(\FF_t)_{t\geq 0},\p)$ satisfying 
\[
w \in C([0,T]; W^{\alpha, k'}\cap C(\T)\quad\p\text{-a.s.},
\]
\begin{equation}\label{eq:RD}
\E \left[\sup_{t\in [0,T]}\|w\|_{W^{\alpha,k'}\cap C(\T)}^{p}\right]<\infty, \quad 1\leq p<\infty,
\end{equation}
\begin{equation}\label{eq:RE}
   dw =D_t^d w + \mathbb{D}_t^s w \dd W 
\end{equation}
where $D_t^d$, $\mathbb{D}_t^d$ are progressively measurable with
\begin{equation}\label{eq:RF}
    \begin{split}
    D_t^d w \in L^p(\Omega; L^2(0,T;W^{\alpha,k'}\cap C(\T)), \qquad \mathbb{D}_t w \in L^p(\Omega; L^2(0,T;L_2(\UU; W^{m,2}(\T)))),\\
    \sum_{k\geq 1}\int_{0}^{T}\|\mathbb{D}_t^s q (e_k)\|_{W^{\alpha,k'}\cap C(\T)}^{2} \, \dd t\in L^p(\Omega)\quad 1\leq p <\infty.
    \end{split}
\end{equation}

Let $Q$ be $[\alpha+2]$-continuously differentiable function satisfying
\begin{equation}\label{eq:RG}
 \E\left[\sup_{t\in [0,T]}\|Q^{(j)}(w)\|_{W^{\alpha,k'}\cap C(\T)}^{p} \right] <\infty \quad j=0,1,2, \quad 1\leq p<\infty. 
\end{equation}
Then 
\begin{eqnarray}\label{eq:RH}
\dd \left (\int_{\T}qQ(w)\,\dd x\right)&=& \left(\int_{\T}\left[q\left(Q'(w)D_t^dw+\frac{1}{2}\sum_{k\geq 1}Q''(w)|\mathbb{D}_t^s(e_k)|^2\right)\right]\, \dd x+ \bigg\langle Q(w),D_s^d q\bigg\rangle\right)\, \dd t\nonumber\\
&&+\left(\sum_{\geq 1}\int_{\T}\mathbb{D}_t^sq(e_k)\mathbb{D}_t^q w(e_k)\,\dd x \right)\dd t + \dd \mathbb{M},
\end{eqnarray}
where 
\begin{equation}\label{eq:RI}
 \mathbb{M}=    \sum_{k\geq 1}\int_{0}^{t}\int_{\T} [q Q'(w)\mathbb{D}_t^sw(e_k) +Q(r)\mathbb{D}_t^sq (e_k)]\,\dd x \dd W_k.
\end{equation}
\end{lem}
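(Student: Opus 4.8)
The statement is an It\^o product rule for the pairing $\int_\T q\,Q(w)\,\dx$, where $q$ is a measure-like process living in negative Sobolev spaces and $w$ is a regular process; the composite $Q(w)$ is what makes the argument delicate. The plan is to proceed by a double regularisation followed by a passage to the limit, exactly in the spirit of the proof of (\cite{FrBrHo}, Thm.~A.4.1), adapting it to accommodate the nonlinearity $Q$. First I would mollify in space: set $q_\kappa = q * \chi_\kappa$, $w_\kappa = w*\chi_\kappa$ with a standard spatial mollifier $\chi_\kappa$, so that both become smooth in $x$ while retaining their It\^o structure, with drift and diffusion simply mollified. For the smooth pair the chain rule for $Q(w_\kappa)$ is classical: apply It\^o's formula in the Hilbert space $L^2(\T)$ to the functional $w_\kappa\mapsto Q(w_\kappa)$, which requires $Q\in C^2$ and the moment bounds \eqref{eq:RG}, producing
\[
\dd Q(w_\kappa) = \Big(Q'(w_\kappa)D_t^d w_\kappa + \tfrac12\sum_{k\ge1}Q''(w_\kappa)|\mathbb{D}_t^s w_\kappa(e_k)|^2\Big)\dt + \sum_{k\ge1}Q'(w_\kappa)\mathbb{D}_t^s w_\kappa(e_k)\,\dd W_k.
\]

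Next I would apply the \emph{bilinear} It\^o product rule (the genuinely standard case, since $q_\kappa$ and $Q(w_\kappa)$ are now both spatially smooth finite-dimensional-in-$x$ semimartingales tested against $\int_\T(\cdot)\dx$) to obtain $\dd\!\int_\T q_\kappa Q(w_\kappa)\dx$. This generates four contributions: the term $\langle Q(w_\kappa), D^d_s q_\kappa\rangle$ from the drift of $q_\kappa$; the term $\int_\T q_\kappa\big(Q'(w_\kappa)D^d_t w_\kappa + \frac12\sum_k Q''(w_\kappa)|\mathbb{D}^s_t w_\kappa(e_k)|^2\big)\dx$ from the drift of $Q(w_\kappa)$; the quadratic cross-variation term $\sum_k\int_\T \mathbb{D}^s_t q_\kappa(e_k)\,Q'(w_\kappa)\mathbb{D}^s_t w_\kappa(e_k)\dx\,\dt$; and the martingale $\dd\mathbb{M}_\kappa$. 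This already has the shape of \eqref{eq:RH}--\eqref{eq:RI}; it remains only to send $\kappa\to0$.

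The limit passage is where the real work sits, and it is the step I expect to be the main obstacle. The duality pairings $\langle Q(w_\kappa),D^d_s q_\kappa\rangle$ and the spatial integrals must be shown to converge using the regularity hypotheses \eqref{eq:RA}--\eqref{eq:RG}: mollification converges strongly in the relevant spaces ($W^{\alpha,k'}\cap C(\T)$ for the $w$-side quantities, $L^1(\T)$ and $W^{-\alpha,k}(\T)$ for the $q$-side), and the nonlinear terms $Q'(w_\kappa),Q''(w_\kappa)$ converge by continuity of $Q',Q''$ together with the uniform bounds from \eqref{eq:RG} and dominated convergence. The subtle points are: (i) commuting $Q'$ with mollification — one needs $Q'(w_\kappa)\to Q'(w)$ in $C(\T)$ $\p$-a.s. with uniform $L^p(\Omega)$ control, which follows from the mean-value theorem, $\|w_\kappa-w\|_{C(\T)}\to0$, and \eqref{eq:RG}; (ii) justifying the stochastic Fubini needed to identify $\mathbb{M}_\kappa\to\mathbb{M}$, which uses the square-integrability of the diffusion coefficients built into \eqref{eq:RC} and \eqref{eq:RF} so that the It\^o isometry controls $\E\|\mathbb{M}_\kappa-\mathbb{M}\|^2_{C([0,T])}\to0$ along a subsequence; and (iii) the cross-variation term, which requires the joint convergence $\mathbb{D}^s_t q_\kappa(e_k)\otimes\mathbb{D}^s_t w_\kappa(e_k)\to\mathbb{D}^s_t q(e_k)\otimes\mathbb{D}^s_t w(e_k)$ in $L^1(Q)$ after summation in $k$, handled by the summability hypotheses $\sum_k\int_0^T\|\mathbb{D}^s_t q(e_k)\|_1^2\dt\in L^p(\Omega)$ and the analogous one for $w$, via Cauchy--Schwarz in $k$. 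Once all four terms converge in, say, $L^1(\Omega;L^1(0,T))$ along a common subsequence, and the martingale part converges uniformly in probability, the identity \eqref{eq:RH} follows in the limit, with $\mathbb{M}$ as in \eqref{eq:RI}. I would remark at the end that no new idea beyond (\cite{FrBrHo}, Thm.~A.4.1) is needed; the only genuinely extra ingredient is the bookkeeping for the composition $Q(w)$, which is why $Q$ is assumed $[\alpha+2]$-times continuously differentiable with the moment control \eqref{eq:RG}.
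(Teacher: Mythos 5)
The paper offers no proof of this lemma: it is introduced only as ``a variant of (\cite{FrBrHo}, Thm.~A.4.1)'' and the reader is referred there, so the only meaningful comparison is with that reference, whose proof proceeds exactly by your scheme — spatial mollification of $q$ and $w$, the pointwise It\^o chain rule for $Q(w_\kappa)$, the bilinear product rule for the smooth pair, and a limit passage $\kappa\to0$ justified by the moment and summability hypotheses \eqref{eq:RA}--\eqref{eq:RG}. Your adaptation to the composition $Q(w)$ is sound, and it is worth noting that the cross-variation term your computation produces correctly carries the factor $Q'(w)$ (as well as requiring $Q(w_\kappa)\to Q(w)$ in $W^{\alpha,k'}$ for the drift pairing, which is where the $[\alpha+2]$-fold differentiability of $Q$ enters), whereas the displayed formula \eqref{eq:RH} omits that factor — evidently a typographical slip in the statement rather than a defect of your argument.
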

 Now following the presentation in \cite{EFAN}, we introduce the (thermodynamic potential) \textit{ballistic free energy}

\begin{equation}\label{ballistic}
 H_{{\Theta}}(\varrho,\vartheta)=\varrho e(\varrho,\vartheta)-\Theta \varrho s (\varrho,\vartheta),
\end{equation}
introduced by Gibbs and more recently by Erickson \cite{EJL}. In addition to Lemma \ref{prod}, 
we consider the \textit{relative energy} functional in the context of measure-valued martingale solutions to the complete Euler system given by
\begin{eqnarray}\label{relative}
\mathcal{E}\bigg(\varrho,E,\vec m\bigg|r,\Theta,\vec U\bigg) &=& \int_{\T}\left[\frac{1}{2}\frac{|\vec m|^2}{\varrho} +E \right]\, \dd x+\frac{1}{2}\int_{\T}\dd \mathrm{tr}[\mathcal{R}_{\text{conv}}]+c_v \int_{\T}\dd \mathcal{R}_{\text{press}}\nonumber\\
&&-\int_{\T}\vec m \cdot \vec U \, \dd x+\int_{\T}\frac{1}{2}\varrho\left|\vec U\right|^2 \, \dd x\\
&&-\int_{\T}\Theta\varrho {s}(\varrho,E)\, \dd x-\int_{\T}\varrho\partial_{\varrho}H_{\Theta}\,\dd x+\int_{\T}\partial_{\varrho}H_{\Theta}(r,\Theta)(r)-H_{\Theta}(r,\Theta)\, \dd x,\nonumber
\end{eqnarray}
where the relative functional (\ref{relative}) is defined for all $t \in [0,T]$. Now, having stated Lemma \ref{prod} and  the relative energy functional, we are in a position to derive the \textit{relative entropy inequality}.

\begin{prop}[Relative Entropy Inequality]\label{propA}
Let $
((\Omega , \FF , (\FF_t)_{t \geq 0}, \mathbb{P} ),\varrho,\vec m, S, \mathcal{R}_{\mathrm{conv}},\mathcal{V}_{t,x},\mathcal{R}_{\mathrm{entr}},W)
$ be a dissipative measure-valued martingale solution to the system (\ref{eq:aEuler})-(\ref{eq:cEuler}). Let $(r,\Theta, \vec U)$ be a trio of stochastic processes defined on the same probability space and adapted to the filtration $(\FF_t)_{t\geq 0}$ such that

\begin{eqnarray}\label{eq:sass}
    \dd r &=& D_t^dr\,\dd t,\nonumber\\
    \dd \vec U &=& D_t^d \vec U \,\dd t+ \mathbb{D}_t^s \vec U\, \dd W,\nonumber\\
    \dd \Theta &=& D_t^d \Theta\,\dd t,\\
    \dd [\partial_{\varrho}H_{\Theta}(r,\Theta)] &=& D_t^d [\partial_{\varrho}H_{\Theta}(r,\Theta)]\,\dd t,\nonumber
\end{eqnarray}

and\footnote{Note, the moment bound for $\Theta$ below implies the same for $S(r,\Theta)$ by (\ref{entropy}) since $r$ and $\Theta$ are bounded below and above.}
\begin{equation*}
\begin{split}
 r \in C([0,T]; C^{1} (\T)),\quad\Theta \in C([0,T]; C^{1} (\T)),\quad \vec U \in C([0,T]; C^{1}(\T)), \quad\p \text{-a.s.,}  \\
 \E\left[\sup_{t\in [0,T]}\|r\|_{W^{1,q}(\T)}^{2}\right]^k+\E\left[\sup_{t\in [0,T]}\|\vec U\|_{W^{1,q}(\T)}^{2}\right]^q \leq c(q),\quad \text{for all }\, 2\leq q < \infty,
\end{split}
\end{equation*}
\[
0<\underline{r} \leq r(t,x) \leq \overline{r} \quad \p\text{-a.s.},
\]

\begin{equation*}
\E\left[\sup_{t\in [0,T]}\|\Theta\|_{W^{1,q}(\T)}^{2}\right]^k \leq c(q),\quad \text{for all }\, 2\leq q < \infty,
\end{equation*}
\[
0<\underline{\Theta} \leq \Theta(t,x) \leq \overline{\Theta} \quad \p\text{-a.s.}.
\]
Furthermore, $r,\Theta, \vec U$, satisfy 
\[
D^dr,D^d \Theta,D^d\vec U \in L^{q}(\Omega; C(0,T;C^{1}(\T)))\qquad  \mathbb{D}^s\vec U \in L^2(\Omega; L^2(0,T;L_2(\UU;L^2(\T))),
\]
\begin{equation}\label{eq:prop}
 \left(\sum_{k\geq 1}|\mathbb{D}^s\vec U(e_k)|^q\right)^{\frac{1}{q}}   \in L^{q}(\Omega; L^q(0,T;L^q(\T))),
\end{equation}
respectively.
Then the relative entropy inequality:
\begin{equation}\label{REI}
   \mathcal{E}\bigg(\varrho,E,\vec m\bigg|r,\Theta,\vec U\bigg)\leq \mathcal{E}\bigg(\varrho,E,\vec m\bigg|r,\Theta,\vec U\bigg)(0)+\int_{0}^{\tau}\mathcal{Q}\bigg(\varrho,E,\vec m\bigg|r,\Theta,\vec U\bigg)\, \dd t+\mathbb{M},
\end{equation}
holds $\p$-a.s for all $\tau \in (0,T)$, where
\begin{align*}
\mathcal{Q}\bigg(\varrho,\vartheta,\vec u\bigg|r,\Theta,\vec U\bigg)=
&\int_{\T}\varrho\left(\frac{\vec m}{\varrho}-\vec U\right)\cdot\nabla_x\vec U\cdot \left(\vec U-\frac{\vec m}{\varrho}\right)\,\dd x\nonumber\\
&+\int_{\T}[(D_t^d \vec U + \vec U\cdot\nabla_x \vec U )\cdot(\varrho\vec U- \vec m) -p(\varrho,\vartheta)\mathrm{div}_x\vec U] \, \dd x  \\
&-\int_{0}^{\tau}\int_{\T}[\langle \CV; \varrho s(\varrho,E)\rangle D_t^d \Theta + \vec \langle \CV; s(\varrho,E)\vec m\rangle\cdot \nabla_x \Theta ] \, \dd x \dd t\nonumber\\
&+\int_{0}^{T}\int_{\T}[\varrho s (r,\Theta) \partial_{t}\Theta+ \vec m s(r,\Theta)\cdot\nabla_x\Theta]\, \dd x\dd t \nonumber\\
&+\int_{\T} \left(\left(1-\frac{\varrho}{r}\right)\partial_{t}p(r,\Theta)-\frac{\vec m}{r}\cdot\nabla_x p(r,\Theta)\right)\,\dd x.\nonumber,\\
&-\sum_{k\geq 1}\int_{\T}\mathbb{D}_t^s\vec U(e_k)\cdot \varrho\phi(e_k)\, \dd x - \int_{\T}\nabla \vec U:\dd \mathcal{R}_{\text{conv}}-\int_{\T}\mathrm{div} \vec U \dd \mathcal{R}_{press}\\
&+\frac{1}{2}\|\sqrt{{\varrho}}\phi\|_{L_2(\UU,L^2(\T))}^{2} +\frac{1}{2}\sum_{k\geq 1}\int_{\T}\varrho|\mathbb{D}_t^s\vec U(e_k)|^2\, \dd x, 
\end{align*}
and
\begin{eqnarray*}
\mathbb{M}&=& \int_{0}^{\tau}\int_{\T}{\vec m}\phi \,\dd x\dd {W} \\
&& -\int_{0}^{t}\int_{\T}\bigg[\vec m\mathbb{D}_t^s\vec U+ \vec U\varrho \phi\bigg]\,\dd x \dd W +\int_{0}^{t}\int_{\T}\varrho\vec U\cdot \mathbb{D}_t^s\vec U\, \dd x\dd W,
\end{eqnarray*}
here we set $Z(s(\varrho,E))=s(\varrho,E)$ for convenience, see \cite{FEJB} section 3.2 for properties of $Z$.
\end{prop}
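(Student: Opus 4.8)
The plan is to assemble the relative energy functional $\mathcal{E}(\varrho,E,\vec m\,|\,r,\Theta,\vec U)$ term by term, applying the stochastic product rule from Lemma \ref{prod} to each bilinear pairing of a solution variable with one of the smooth test processes $r,\Theta,\vec U,\partial_\varrho H_\Theta(r,\Theta)$, and then collecting the resulting drift terms into the remainder $\mathcal{Q}$ and the martingale terms into $\mathbb{M}$. First I would treat the kinetic part $\tfrac12\int_{\T}\varrho|\vec U|^2\dx-\int_{\T}\vec m\cdot\vec U\dx$: here one applies It\^o's formula to the composite functional, using the continuity equation (\ref{eq:cont}) for $\dd\varrho$, the momentum balance (\ref{eq:mcxs}) for $\dd\vec m$ (which contributes the defect measures $\mathcal{R}_{\text{conv}},\mathcal{R}_{\text{press}}$ paired against $\nabla\vec U$ and $\operatorname{div}\vec U$, the stochastic forcing $\varrho\phi\,\dd W$, and the nonlinear flux $\tfrac{\vec m\otimes\vec m}{\varrho}$ evaluated through the Young measure), and the assumed decomposition (\ref{eq:sass}) for $\dd\vec U$. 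The It\^o correction generates the term $\tfrac12\sum_k\int_{\T}\varrho|\mathbb{D}_t^s\vec U(e_k)|^2\dx$ and the cross term $-\sum_k\int_{\T}\mathbb{D}_t^s\vec U(e_k)\cdot\varrho\phi(e_k)\dx$ appearing in $\mathcal{Q}$. I would then add the internal-energy contribution $\int_{\T}E\dx+\tfrac12\int_{\T}\dd\operatorname{tr}\mathcal{R}_{\text{conv}}+c_v\int_{\T}\dd\mathcal{R}_{\text{press}}$, which is controlled using the total energy relation (\ref{eq:Ener}); crucially, since the energy holds as an \emph{equality} after the correction made at the end of Section~\ref{mvsproof}, the energy contributes $\tfrac12\|\sqrt\varrho\phi\|_{L_2}^2\dt+\int_{\T}\vec m\cdot\phi\dx\dd W$ exactly, with no sign loss.

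Next I would handle the thermodynamic pieces $-\int_{\T}\Theta\varrho s(\varrho,E)\dx$, $-\int_{\T}\varrho\partial_\varrho H_\Theta(r,\Theta)\dx$, and $\int_{\T}\big(\partial_\varrho H_\Theta(r,\Theta)\,r-H_\Theta(r,\Theta)\big)\dx$. For the first, one combines the entropy inequality (\ref{eq:entr}) with $Z(s)=s$ (legitimate by the properties of $Z$ cited from \cite{FEJB}) against the test function $\varphi=\Theta\geq 0$: the entropy production inequality gives $-\int_{\T}\langle\mathcal{V}_{t,x};\varrho s\rangle\Theta$ a sign in the favorable direction, contributing the terms $-\int_{\T}[\langle\mathcal{V};\varrho s\rangle D_t^d\Theta+\langle\mathcal{V};s\vec m\rangle\cdot\nabla_x\Theta]\dx$ in $\mathcal{Q}$. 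For the $\partial_\varrho H_\Theta(r,\Theta)$ pairings, since $r,\Theta$ are smooth and solve only drift equations, these are handled by the deterministic product rule (or Lemma \ref{prod} with vanishing diffusion), using the Gibbs relation (\ref{gibbs}) and the identity $\partial_\varrho H_\Theta=e+p/\varrho-\Theta s-$(lower order), to convert the pressure-work terms into the form $\int_{\T}\big((1-\tfrac\varrho r)\partial_t p(r,\Theta)-\tfrac{\vec m}{r}\cdot\nabla_x p(r,\Theta)\big)\dx$ and $\int_{\T}[\varrho s(r,\Theta)\partial_t\Theta+\vec m s(r,\Theta)\cdot\nabla_x\Theta]\dx$. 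The convective/viscous-looking term $\int_{\T}\varrho(\tfrac{\vec m}{\varrho}-\vec U)\cdot\nabla_x\vec U\cdot(\vec U-\tfrac{\vec m}{\varrho})\dx$ arises by algebraic rearrangement of the momentum-flux term $\int_{\T}\tfrac{\vec m\otimes\vec m}{\varrho}:\nabla_x\vec U$ together with the $\varrho\vec U\cdot\nabla_x\vec U$ contributions, completing the square in $\vec U-\tfrac{\vec m}{\varrho}$. Finally all genuine stochastic integrals — $\int_{\T}\vec m\phi\,\dd W$ from the energy, $-\int_{\T}[\vec m\,\mathbb{D}_t^s\vec U+\vec U\varrho\phi]\dx\,\dd W$ from the kinetic cross term, and $\int_{\T}\varrho\vec U\cdot\mathbb{D}_t^s\vec U\dx\,\dd W$ — are gathered into $\mathbb{M}$, and one checks via the moment bounds (\ref{eq:RA})–(\ref{eq:RG}) and (\ref{eq:prop}) that $\mathbb{M}$ is a genuine (local) martingale so the inequality is preserved under the manipulations.

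The main obstacle I anticipate is the careful verification that Lemma \ref{prod} actually applies to each pairing: the solution variables $\varrho,\vec m,S$ live only in $C_{\text{weak}}([0,\infty);W^{-4,2})$ (resp. $BV_w$ for $S$) with the spatial integrability from Definition \ref{E:dfn}(c)–(f), so one must check the regularity hypotheses (\ref{eq:RA})–(\ref{eq:RC}) on the rough factor $q$ (i.e. that $D_t^d q$ and $\mathbb{D}_t^s q$ have the claimed mapping properties with the defect measures absorbed into the $W^{-\alpha,k}$ term) and the companion hypotheses (\ref{eq:RD})–(\ref{eq:RG}) on the smooth factor $w\in\{r,\Theta,\vec U,\partial_\varrho H_\Theta(r,\Theta)\}$ and the nonlinearity $Q$ — here positivity of $r,\Theta$ bounded away from $0$ and $\infty$ is essential to keep $\partial_\varrho H_\Theta$ and its derivatives in the required Hölder/Sobolev classes. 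A second delicate point is the treatment of the non-smooth terms: the entropy inequality (\ref{eq:entr}) is only an inequality and holds for $\varphi\in C^1([0,\infty)\times\T)$, so one must approximate $\Theta$ (which is merely $C([0,T];C^1(\T))$, not $C^1$ in time) by mollification in time, apply (\ref{eq:entr}), and pass to the limit, tracking that the direction of the inequality is the one that makes $\mathcal{E}$ decrease; similarly the energy equality holds only for a.a. $s<t$, so one works with the averaged/time-continuous version constructed in Section \ref{mvsproof}. Once these regularity and sign bookkeeping issues are dispatched, the identity is an (admittedly lengthy) exercise in collecting terms, and the displayed inequality (\ref{REI}) follows.
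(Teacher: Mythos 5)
Your proposal follows essentially the same route as the paper's proof: apply Lemma \ref{prod} to compute $\dd\int_{\T}\vec m\cdot\vec U\,\dd x$ and $\dd\int_{\T}\tfrac12\varrho|\vec U|^2\,\dd x$, test the entropy inequality (\ref{eq:entr}) with $\Theta$ and the continuity equation (\ref{eq:cont}) with $\partial_\varrho H_\Theta(r,\Theta)$, then sum these with the energy balance (\ref{eq:Ener}) and collect drift terms into $\mathcal{Q}$ and stochastic integrals into $\mathbb{M}$. Your discussion of the regularity verification and of the sign bookkeeping for the entropy inequality is in fact more detailed than the paper's own (rather terse) two-step argument, which defers these points to the cited references.
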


\begin{proof}
We note that, the right-hand-side of the formulation (\ref{relative}) follows from energy inequality. Therefore, using the energy inequality and Lemma \ref{prod}, we proceed in several steps as follows:\newline

\textbf{Step 1}:
To compute $\dd \int_{\T} \vec m\cdot\vec U \,\dd x$ we recall that $q =\vec m$ satisfies hypotheses (\ref{eq:RA}) , (\ref{eq:RC}) with some $k< \infty$. Applying Lemma \ref{prod} we deduce
\begin{eqnarray}\label{eq:PA}
\dd \left( \int_{\T}\vec m\cdot\vec U \,\dd x \right)&=&\left(\int_{\T}\bigg[\vec m\cdot D_t^d\vec U+ \left(\frac{\vec m\otimes\vec m}{\varrho}\right)\cdot\nabla \vec U+ p(\varrho, s)\mathrm{div}\vec U\bigg]\,\dd x \right)\, \dd t\nonumber\\
&&+\sum_{k\geq 1}\int_{\T}\mathbb{D}_t^s\vec U(e_k)\cdot \varrho\phi(e_k)\, \dd x \dd t+ \int_{\T}\nabla \vec U:\dd \mathcal{R}_{\text{conv}}\dd t +\int_{\T}\rm{div}\vec U\dd \mathcal{R}_{\rm{press}}\dd t\\
&&+\dd M_1,\nonumber
\end{eqnarray}

where 
\[
M_1 =\int_{0}^{t}\int_{\T}\bigg[\vec m\mathbb{D}_t^s\vec U+ \vec U\varrho \phi\bigg]\,\dd x \dd W.
\]

Similarly to (\ref{eq:PA}), we compute
\begin{eqnarray}\label{eq:PB}
\dd\left(\int_{\T}\frac{1}{2}\varrho |\vec U|^2\,\dd x\right)&=&\int_{\T}\varrho\vec u \cdot \nabla \vec U\cdot \vec U\, \dd x \dd t+ \int_{\T}\varrho\vec U\cdot D_t^d\vec U\, \dd x \dd t\\
&&+\frac{1}{2}\sum_{k\geq 1}\varrho|\mathbb{D}_t^s\vec U(e_k)|^2\, \dd x \dd t +\dd M_2, \nonumber
\end{eqnarray}
where 
\[
M_2 = \int_{0}^{t}\int_{\T}\varrho\vec U\cdot \mathbb{D}_t^s\vec U\, \dd x\dd W.
\]

Testing the entropy balance (\ref{eq:entr}) with $\Theta$ we deduce
\begin{equation}\label{eq:PD}
\dd \left(\int_{\T}\langle \mathcal{V}_{t,x};  \varrho s \rangle\cdot\Theta \dd x\right) 
\geq\int_{\T} \varrho\langle\CV;\vec m s \rangle\cdot \nabla_x \Theta\, \dd x\dd t+\int_{\T}\langle \CV; \varrho s\rangle D_t^d \Theta\, \dd x\dd t,
\end{equation}
where $D_t^d\Theta =\partial_t\Theta$. Similarly, testing the continuity equation (\ref{eq:cont}) with $\partial_{\varrho}H_{\Theta}(r,\Theta)$ yields

\begin{equation}\label{eq:PF}
 \dd\left(\int_{\T}\varrho \partial_{\varrho}H_{\Theta}(r,\Theta)\, \dd x \right) =\int_{\T} \vec m\cdot \nabla_x(\partial_{\varrho}H_{\Theta}(r,\Theta)) \,\dd x+ \int_{\T}\varrho D_t^d(\partial_{\varrho}H_{\Theta}(r,\Theta))\, \dd x.
\end{equation}
where $D_t^d(\partial_{\varrho}H_{\Theta}(r,\Theta)) =\partial_t(\partial_{\varrho}H_{\Theta}(r,\Theta))$.\newline
\textbf{Step 2}:
Finally, we collect and sum the resulting expressions (\ref{eq:PA})-(\ref{eq:PF}), and add (\ref{eq:Ener}) to the sum to obtain (\ref{REI}), see \cite{EFAN} for more details.
\end{proof}

\begin{rmk} The \textit{ relative entropy inequality} is satisfied for any trio $[r,\Theta,\vec U]$ provided $p,e$ and $s$ satisfy the Gibbs' relation.
\end{rmk}

We are now ready to prove Theorem \ref{thm_a}, accordingly, we use Proposition \ref{propA} and a Gronwall type argument to prove pathwise weak-strong uniqueness claim as follows.

\begin{proof}[Proof of the claim]:
\newline
\textbf{Step 1}

We begin by introducing a stopping time

\[
\tau_{M}=\inf\bigg \{t\in (0,T)|\quad\|\vec U(s,\cdot)\|_{W^{1,2}(\T)}>M\bigg\}
\]
Since $[r,\Theta,\vec U]$ is a strong solution,
\[
\p\left[\lim_{M\to \infty}\tau_M=\mathfrak{t}\right]=1;
\]
therefore, it is enough to show results for a fixed M. Furthermore, $[r,\Theta, \vec U] \equiv[\varrho,\vartheta,\vec u]$ satisfies an equation of the form (\ref{eq:sass}), with
\[
D_t^d =-\vec U \cdot \nabla_x \vec U -\frac{1}{r}\nabla_x p(r,\Theta),\quad \mathbb{D}_t^s\vec U =\phi,\quad D_t^d r =-\mathrm{div}_x(r\vec U). 
\]

\begin{rmk}
Note that the It\"o correction term in (\ref{REI}) vanishes for our choice of $D_t^d\vec U.$ 
\end{rmk}
\textbf{Step 2}
\newline
For $M>0$, we have 
\begin{equation}\label{eq:Tstop}
\sup_{t\in [0,\tau_M]}\|\nabla\vec U\|_{L^{\infty}(\T)}\leq c(M).
\end{equation}

Since $r$ satisfies the continuity equation and hypothesis (\ref{Idata}), then from maximum and minimum principle we have
\[
0<\underline{r}_M\leq r(t\wedge \mathfrak{t})\leq \overline{r}_M
\]
for some deterministic constants $\underline{r}_M,\overline{r}_M$. Similarly, for $\Theta$ we have 
\[
0<\underline{\Theta}_M\leq \Theta(t\wedge \mathfrak{t})\leq \overline{\Theta}_M.
\]
The relative energy (\ref{relative}) can be re-written as

\begin{eqnarray}\label{Nvar}
\mathcal{E}\bigg(\varrho,E,\vec m\bigg|r,\Theta,\vec U\bigg)&=&\int_{\T}\left(\frac{1}{2}\varrho\left|\frac{\vec m}{\varrho}-\vec U\right|^2 +E -\Theta\varrho \vec{s}(\varrho,E)-\partial_{\varrho}H_{\Theta}(r,\Theta)(\varrho-r)-H_{\Theta}(r,\Theta)\right)\, \dd x\nonumber\\
&&+ \frac{1}{2}\int_{\T}\dd \text{tr} \mathcal{R}_{\text{conv}}(t)+c_v\int_{\T} \dd \mathcal{R}_{\text{press}}(t).
\end{eqnarray}
where $E =\varrho e(\varrho,\vartheta)$. 

Moreover, we consider a function $\Phi(\varrho,E)$,
\[
\Phi \in C_c^{\infty}(0,\infty)^2, 0\leq \Phi\leq 1.
\]
For a measurable function $G(\varrho,E, \vec m)$, we set 
\begin{equation}\label{ess}
 G = G_{\text{ess}}  + G_{\text{res}}, \quad G_{\text{ess}}=\Phi(\varrho,E)G(\varrho,E,\vec m),\quad G_{\text{res}}=(1-\Phi(\varrho,E))G(\varrho,E,\vec m).
\end{equation}
Following the presentation in \cite{FEJB, EA}, $G_{\text{ess}}$ accounts for the `essential part' that describes the behaviour of the non-linearity in the non-degenerate area where both $\varrho$ and $\vartheta$ are bounded below and above. On the other hand, $G_{\text{res}}$ accounts for the `residual part' that captures the behaviour in the singular regime $\varrho,\vartheta \to 0$ or/and $\varrho,\vartheta \to \infty$.\newline

In view of (\ref{ess}), we recall the coercivity properties of $\mathcal{E}$ proved in (\cite{EA}, Chapter 3, Proposition 3.2),

\begin{eqnarray}\label{coercivity}
\mathcal{E}\bigg(\varrho,E,\vec m|r,\Theta,\vec U\bigg) &\gtrsim& \int_{\T}\bigg[|\varrho-r|^2+|E-re(r,\Theta)|^2+\left|\frac{\vec m}{\varrho}-\vec U\right|^2\bigg]_{\mathrm{ess}}\, \dd x\\
&& +\int_{\T}\bigg[1+\varrho +\varrho|s(\varrho,E)|+E +\frac{|\vec m|}{\varrho}\bigg]_{\mathrm{res}}\,\dd x\nonumber.
\end{eqnarray}

\textbf{Step 3:}\newline
In view of (\ref{Nvar}) and  Proposition \ref{propA}, we apply the  relative entropy inequality (\ref{REI}) on the time interval $[0,\tau_M]$
\begin{equation}\label{SREI}
   \mathcal{E}\bigg(\varrho,E,\vec m\bigg|r,\Theta,\vec U\bigg)(t\wedge\tau_M)\leq \mathcal{E}\bigg(\varrho,E,\vec m\bigg|r,\Theta,\vec U\bigg)(0)+\int_{0}^{\tau\wedge \tau_M}\mathcal{Q}\bigg(\varrho,E,\vec m\bigg|r,\Theta,\vec U\bigg)\, \dd t+\mathbb{M}(t\wedge \tau_M),
\end{equation}
with
\begin{align}\label{qfn}
\mathcal{Q}\bigg(\varrho,E,\vec m\bigg|r,\Theta,\vec U\bigg)=
&\int_{\T}\varrho\left(\frac{\vec m}{\varrho}-\vec U\right)\cdot\nabla_x \vec U\cdot\left(\vec U-\frac{\vec m}{\varrho}\right) \,\dd x\nonumber\\
&+\int_{\T}[(\varrho\vec U- \vec m)(D_t^d \vec U + \vec U\cdot\nabla_x \vec U ) -p(\varrho,E)\mathrm{div}_x\vec U] \, \dd x \nonumber\\
&-\int_{0}^{\tau}\int_{\T}[\langle \CV; \varrho s(\varrho,E)\rangle D_t^d \Theta + \vec \langle \CV; s(\varrho,E)\vec m\rangle\cdot \nabla_x \Theta ] \, \dd x \dd t\nonumber\\
&+\int_{0}^{T}\int_{\T}[\varrho s (r,\Theta) \partial_{t}\Theta+ \vec m s(r,\Theta)\cdot\nabla_x\Theta]\, \dd x\dd t \nonumber\\
&+\int_{\T} \left(\left(1-\frac{\varrho}{r}\right)\partial_{t}p(r,\Theta)-\frac{\vec m}{r}\cdot\nabla_x p(r,\Theta)\right)\,\dd x\nonumber\\
&- \int_{\T}\nabla \vec U:\dd \mathcal{R}_{\text{conv}}-\int_{\T}\mathrm{div} \vec U \dd \mathcal{R}_{\text{press}}.\nonumber\\
\end{align}

To apply Gronwall's argument, we need to show the estimate 

\begin{equation}\label{Qest}
\mathcal{Q}\bigg(\varrho,E,\vec m\bigg|r,\Theta,\vec U\bigg)\lesssim c\, \mathcal{E}\bigg(\varrho,E,\vec m\bigg|r,\Theta,\vec U\bigg),
\end{equation}
for some constant $c > 0$. In view of (\ref{eq:Tstop}), the estimate on defect measures is given by

\begin{equation}
 \int_{0}^{\tau\wedge \tau_m}\int_{\T}\nabla_x\vec U:\dd[\mathcal{R}_{\text{conv}} + \mathcal{R}_{\text{press}}\mathbb{I}]\,\dd t  
\lesssim c(M) \frac{1}{2} \int_{0}^{\tau\wedge \tau_m}\int_{\T}\dd \text{trace}[\mathcal{R}_{\text{conv}} + \mathcal{R}_{\text{press}}\mathbb{I}]\,\dd t.  
\end{equation}

Similarly, using (\ref{eq:Tstop}) we obtain 
\begin{eqnarray*}
\left|\int_{\T}\varrho\left(\frac{\vec m}{\varrho}-\vec U\right)\cdot\nabla_x \vec U\cdot\left(\vec U-\frac{\vec m}{\varrho}\right) \,\dd x\right|&\leq& \int_{\T}\varrho\left|\frac{\vec m}{\varrho}-\vec U\right|^2|\nabla_x \vec U| \, \dd x,\\
&\lesssim& c(M)\int_{\T}\varrho\left|\frac{\vec m}{\varrho}-\vec U\right|^2 \, \dd x.
\end{eqnarray*}

Furthermore, we observe that
\[
(\varrho\vec U- \vec m)(D_t^d \vec U + \vec U\cdot\nabla_x \vec U )-\frac{ \vec m}{r}\cdot\nabla_x p(r,\Theta)=-\frac{\varrho\vec U}{r}\cdot\nabla_x p(r,\Theta).
\]
Consequently, (\ref{qfn}) reduces to
\begin{align}\label{qfnn}
\mathcal{Q}\bigg(\varrho,E,\vec m\bigg|r,\Theta,\vec U\bigg)\lesssim
&-\int_{0}^{\tau}\int_{\T}[\langle \CV; \varrho s(\varrho,\vartheta)\rangle D_t^d \Theta + \vec \langle \CV; s(\varrho,\vartheta)\vec m\rangle\cdot \nabla_x \Theta ] \, \dd x \dd t\nonumber\\
&+\int_{0}^{T}\int_{\T}[\varrho s (r,\Theta) \partial_{t}\Theta+ \vec m s(r,\Theta)\cdot\nabla_x\Theta]\, \dd x\dd t \nonumber\\
&+\int_{\T}[p(r,\Theta)\mathrm{div}_x\vec U -p(\varrho,E)\mathrm{div}_x\vec U] \, \dd x \nonumber\\
&+\int_{\T} \left((r-\varrho)\frac{1}{r}\partial_{t}p(r,\Theta)-\frac{\varrho\vec U}{r}\cdot\nabla_x p(r,\Theta)-p(r,\Theta)\mathrm{div}_x\vec U\right)\,\dd x\nonumber\\
&+c_1\, \mathcal{E}\bigg(\varrho,E,\vec m\bigg|r,\Theta,\vec U\bigg).\nonumber\\
\end{align}

Finally, adopting the notation in (\ref{ess}) and manipulating terms in (\ref{qfnn}), as in \cite{FEJB}, we obtain

\begin{align}\label{eq:EA}
\mathcal{Q}\bigg(\varrho,E,\vec m\bigg|r,\Theta,\vec U\bigg)\lesssim
c\, \mathcal{E}\bigg(\varrho,E,\vec m\bigg|r,\Theta,\vec U\bigg).\nonumber\\
\end{align}

\textbf{Step 4}\newline
In view of the above estimates, the relative entropy inequality (\ref{SREI}) reduces to
\begin{equation}\label{SREE}
   \mathcal{E}\bigg(\varrho,E,\vec m\bigg|r,\Theta,\vec U\bigg)(t\wedge\tau_M)\lesssim \mathcal{E}\bigg(\varrho,E,\vec m\bigg|r,\Theta,\vec U\bigg)(0)+\int_{0}^{\tau\wedge \tau_M}c\,\mathcal{E}\bigg(\varrho,E,\vec m\bigg|r,\Theta,\vec U\bigg)(t)\, \dd t+\mathbb{M}(t\wedge\tau_M).
\end{equation}
Now, taking the expectation in $[t\wedge \tau_M]$ and applying Gronwall's lemma yields
\begin{equation}\label{eq:ExpA}
   \E\left[\mathcal{E}\bigg(\varrho,E,\vec m\bigg|r,\Theta,\vec U\bigg)(t\wedge\tau_M)\right]\leq c(M)\E\left[\mathcal{E}\bigg(\varrho,E,\vec m\bigg|r,\Theta,\vec U\bigg)(0)\right], 
\end{equation}
where 
\[
\E\left[\mathcal{E}\bigg(\varrho,E,\vec m\bigg|r,\Theta,\vec U\bigg)(0)\right]=0
\]
by assumptions. Therefore, we observe that 
\[
\E\left[\mathcal{E}\bigg(\varrho,E,\vec m\bigg|r,\Theta,\vec U\bigg)(t\wedge\tau_M)\right]=0
\]
for all $t \in (0,T)$,  yielding the claim.
\end{proof}

\section{Martingale solutions as measures on the space of trajectories}\label{markov}
Firstly, we observe that from the proof of Theorem [existence], the natural filtration associated to a dissipative measure-valued martingale solution in the sense of Definition \ref{E:dfn} is the joint canonical filtration of $[\varrho, \vec m, S, \mathcal{R}_{\text{conv}},\mathcal{R}_{\text{press}},\mathcal{V}_{t,x},W]$. However, the canonical processes $[S, \mathcal{R}_{\text{conv}},\mathcal{R}_{\text{press}},\mathcal{V}_{t,x}]$ are class of equivalences in time and not a stochastic processes in the classical sense. Therefore, it is not obvious as to how one should formulate the Markovianity of the system (\ref{eq:aEuler})-(\ref{eq:cEuler}). To circumvent this problem, we shall introduce new variables $\Ss,\vec R$ (time integrals) such that 
\[
\Ss = \int_{0}^{\cdot}S\,\dd s, \quad \vec R = \int_{0}^{\cdot}\left(\mathcal{R}_{\text{conv}},\mathcal{R}_{\text{press}},\mathcal{V}_{t,x}\right)\,\dd s.
\]
Consequently, the notion of new variables allows us to establish the Markov selection for the joint law of $[\varrho,\vec m, \Ss, \vec R]$. In this case, the stochastic process has continuous trajectories and contains all necessary information. The initial data for $[\Ss,\vec R]$ is superfluous and only needed for technical reasons in the selection process.
To study Markov selection, it is desirable to consider the martingale solutions as probability measures $\PP \in $ Prob $[\Omega]$ such that
\[
\Omega= C_{\text{loc}}([0,\infty); W^{-k,2}(\T)),
\]
where $k>3/2$. Adopting the set-up of Section \ref{path} we set $X= W^{-k,2}(\T))$. Accordingly, let $\B$ denote the Borel $\sigma$-field on $\Omega$. Let $\boldsymbol{\xi}=(\xi^1,\xi^2,\xi^3,\xi^4)$ denote the canonical process of projections such that
\[
\boldsymbol{\xi}:\Omega=(\xi^1,\xi^2,\xi^3,\xi^4):\Omega\to \Omega, \quad \boldsymbol{\xi}_t\omega=(\xi_t^1,\xi_t^2,\xi_t^3,\xi_t^4)(\omega)=\omega_t \in W^{-k,2}(\T)),\text{for any $t\geq 0$}
\]
where the notation $\omega_t$ indicates that our random variable is time dependent. In addition, let $(\B_t)_{t\geq 0}$ be the filtration associated to canonical process given by
\[
\B_t:= \sigma(\boldsymbol{\xi}|_[0,t]),\quad t\geq 0,
\]
which coincides with the Borel $\sigma$-field on $\Omega^{[0,t]}=([0,t];W^{-k,2}(T))$. For analysis, the dissipative martingale solutions
\[
((\Omega,\FF, (\FF_t)_{t\geq 0},\p),\varrho,\vec m,S, \mathcal{R}_{\text{conv}},\mathcal{R}_{\text{press}},\mathcal{V}_{t,x}, W),
\]
in the sense of Definition \ref{E:dfn} is considered a probability law $U$, that is,
\[
\PP =\LL\left[\varrho, \vec m = \varrho\vec u, \int_{0}^{\cdot}S\,\dd s,\int_{0}^{\cdot} (\mathcal{R}_{\text{conv}},\mathcal{R}_{\text{press}},\mathcal{V}_{t,x})\, \dd s\right]\in\text{ Prob$[\Omega]$}.
\]
Consequently, we obtain the probability space $(\Omega,\B, (\B_t)_{t\geq 0}, U)$. Furthermore, let $\mathcal{P}$ be probability measure and introduce the space
 \begin{eqnarray*}
 F &=& \left\{[\varrho,\vec m, \mathcal{S},\vec{R}] \in\Tilde{F}\bigg|\int_{\T}\frac{|\vec m|^2}{|\varrho|}\dd x<\infty\right\},\\
 \Tilde{F}&=&L^{\gamma}(\T)\times L^{\frac{2\gamma}{\gamma+1}}(\T)\times (L^{\gamma}(\T))\cap BV_{w, \text{loc}}^{2}(W^{-l,2}(\T))\times (W^{-k,2}(\T, \cdot))^2
 \times (W^{-k,2}(\T ,A).
 \end{eqnarray*}
 where $A=\R\times\R^3\times\R$. We augment $F$ with the points of the form $(0,\vec0,\Ss,\vec R)$ for $\Ss \in W^{-l,2}(\T)\cap L^{\gamma}(\T)$ and $\vec R \in W^{-k,2}(\T,\R^{15})$. Therefore, $F$ is Polish space with metric
 \begin{equation}\label{metric}
    \dd_{F}(y,z)=\dd_{Y}((y^1,\vec y^2, \vec y^3,\vec y^4),(z^1,\vec z^2,\vec z^3,\vec z^4))=\|y-z\|_{\tilde F}+\left\|\frac{\vec y^2}{\sqrt{|y^1|}}-\frac{\vec z^2}{\sqrt{|z^1|}}\right\|_{L_x^2}.
 \end{equation}
 \newline \\
Moreover, inclusion $F \hookrightarrow X$ is dense. Accordingly, we define a subset
 \[
 Y =\left\{[\varrho, \vec m, \mathcal{S},\vec{R}]\in X\bigg|\varrho\not\equiv 0, \varrho \geq 0, \int_{\T}\frac{|\vec m|^2}{\varrho}\, \dd x< \infty\right\}.
 \]
 We observe that $(Y,d_{F})$ is not complete because $\varrho\not\equiv0$, and the inclusion $Y\hookrightarrow X$ is not dense since $\varrho \geq 0$. The probability law $\PP(t,\cdot)$ continues to hold(supported) in $Y$, and consequently determines \textit{the set of admissible initial conditions}. 
 
 \begin{dfn}[Dissipative measure-valued martingale solution]\label{dfnMarkov}A borel probability measure $\PP$ on $\Omega$ is called a solution to the martingale problem associated to (\ref{eq:aEuler})-(\ref{eq:cEuler}) provided:
 \begin{itemize}
     \item [(a)] it holds
     \begin{align*}
          \PP&\left(\xi^1\in C_{\mathrm{loc}}[0,\infty);(L^{\gamma}(\T),w)),\xi^1\geq 0\right)=1,\\
           \PP&\left(\xi^2\in C_{\mathrm{loc}}[0,\infty);(L^{\frac{2\gamma}{\gamma+1}}(\T),w))\right)=1,\\
            \PP&\left(\xi^3\in W^{1,\infty}[0,\infty);(L^{\gamma}(\T)))\cap BV_{w,\mathrm{loc}}^{2}(0,\infty; W^{-l,2}(\T))\right)=1,\\
             \PP&\left(\xi^4\in (W_{\mathrm{weak}-(*)}^{1,\infty}(0,\infty;\mathcal{M}^+(\T, \cdot)))^2\right)=1;\\
             \PP&\left(\xi^4\in (W_{\mathrm{weak}-(*)}^{1,\infty}((0,\infty)\times\T; \p(\R^6)))\right)=1;\\
     \end{align*}
     \item[(b)] the total energy
     \[
    \mathfrak{E}= \int_{\T}\left[\frac{1}{2}\frac{|\boldsymbol{\xi}^2|^2}{\xi^1}+c_v(\xi^1)^{\gamma}\exp{\left(\frac{\xi^3}{c_v\xi^1}\right)}\right]\, \dd x + \frac{1}{2}\int_{\T}\dd \mathrm{tr}\boldsymbol{\xi}^4_{\text{conv}}(t) +c_v\int_{\T}\dd \mathrm{tr}\boldsymbol{\xi}^4_{\text{press}}(t)
    \]
    belongs to the space $L^\infty_{\rm{loc}}(0,\infty)$ $\PP\mbox{-a.s.}$;
    \item[(c)] it holds $\PP$-a.s.
\[
\left[ \int_{\T}\xi_{t}^1 \psi  \right]_{t = 0}^{t = \tau} -
\int_0^\tau \int_{\T}{ \boldsymbol{\xi}_{t}^{2} \cdot \nabla \psi  } \,\dd x\dt = 0
\]
for any $\psi \in C^1(\T)$ and  $\tau \geq 0$;
\item[(d)] for any $\varphi \in C^1(\T, \R^3)$,
the stochastic process

\begin{eqnarray*}
         \mathscr M(\varphi): [\omega, \tau] \mapsto\left[\int_{\T}\boldsymbol{\xi}_t^2 \cdot \boldsymbol{\varphi}\right]_{t=0}^{t=\tau}&-&\int_{0}^{\tau}\int_{\T}\left[\frac{\boldsymbol{\xi}_t^2 \otimes\boldsymbol{\xi}_t^2}{\xi_t^1}:\nabla\boldsymbol{\varphi}+\xi_t^1\exp{\left(\frac{\xi_t^3}{c_v\xi_t^1}\right)\mathrm{div}\boldsymbol{\varphi}}\right]\, \dd x \dd t\\
         &&-\int_{0}^{\tau}\nabla \varphi:\dd \boldsymbol{\xi_t}_{\mathrm{conv}}^4 \dd t-\int_{0}^{\tau}\int_{\T}\mathrm{div} \boldsymbol{\varphi}\,\dd \boldsymbol{\xi_t}_{\mathrm{press}}^4 \dd t\nonumber
    \end{eqnarray*}
is a square integrable $((\mathfrak{B}_{t})_{t\geq0},\PP)$-martingale with quadratic variation
\[
\frac{1}{2} \int_0^\tau \sum_{k=1}^\infty \left( \int_{\T}{\xi_{t}^1\varphi e_k \cdot \varphi } \right)^2\dd t;
\]
\item[(e)] It holds $\PP$-a.s.
    \begin{align}\label{eq:Entr}
    \begin{aligned}
        \int_{0}^{\tau}\int_{\T}&\left[\langle(\boldsymbol{\xi}^4_\nu)_{t,x};Z(\tilde S)\rangle\partial_t \varphi+\langle(\boldsymbol{\xi}^4_\nu)_{t,x},Z(\tilde S)\tilde{\vec m}/\tilde\varrho\rangle\cdot \varphi\right]\, \dd x\dd t \\&\qquad\qquad\leq \left[\int_{\T}\langle (\boldsymbol{\xi}^4_\nu)_{t,x};Z(\tilde S)\rangle \varphi\, \dd x\right]_{t=0}^{t=\tau}
        \end{aligned}
    \end{align}
    for any $\varphi \in C^1([0,\infty)\times \T), \varphi \geq 0,$ 
    \item[(f)] The stochastic process
    \begin{equation}\label{eq:Enerr}
         \mathscr E: [\omega, \tau] \mapsto\mathfrak{E}_\tau-\mathfrak{E}_0-\frac{1}{2}\int_{0}^{\sigma}\|\sqrt{\xi^1}\phi\|_{L_2(\UU;L^2(\T))}^2\, \dd \sigma 
    \end{equation}
    is a square integrable $((\B_{t})_{t\geq0},U)$-martingale with quadratic variation
    \[
\frac{1}{2} \int_0^\tau \sum_{k=1}^\infty \left( \int_{\T}{\boldsymbol{\xi}_{t}^2\cdot\varphi e_k } \right)^2\dt
    \]
    for $\tau \geq 0$.
 \end{itemize}
 \end{dfn}
 
 In the following we state the relation between Definition \ref{E:dfn} and Definition \ref{dfnMarkov}.
 
 \begin{prop}\label{equdfns}
 The following statement holds true
 \begin{enumerate}
     \item Let $
((\Omega,\FF, (\FF_t)_{t\geq 0},\p),\varrho,\vec m,S, \mathcal{R}_{\text{conv}},\mathcal{R}_{\text{press}},\mathcal{V}_{t,x}, W)$ be a dissipative martingale solution to (\ref{eq:aEuler})-(\ref{eq:cEuler}) in the sense of Definition \ref{E:dfn}. Then for every $\FF_0$-measurable random variables $[\mathcal{S}_0, \mathbf{R}_0]$ with values in $\Ss \in W^{-l,2}(\T)\cap L^{\gamma}(\T)$ and $\vec R \in W^{-k,2}(\T,\R^{15})$ we have that 
\begin{equation}\label{marklawA}
\PP= \LL\left[\varrho, \vec m=\varrho\vec u, \mathcal{S}_0+\int_{0}^{\cdot}\mathcal{S}\,\dd s,\vec R_0+\int_{0}^{\cdot}\vec{R}\,\dd s\right]\in \mathrm{Prob}[\Omega]
\end{equation}
is a solution to the martingale problem associated to (\ref{eq:aEuler})-(\ref{eq:cEuler}) in the sense of Definition \ref{dfnMarkov}.
\item Let $\PP$ be a solution to the martingale problem associated to (\ref{eq:aEuler})-(\ref{eq:cEuler}) in the sense of Definition \ref{dfnMarkov}. Then there exists a dissipative martingale solution $
((\Omega,\FF, (\FF_t)_{t\geq 0},\p),\varrho,\vec m,S, \mathcal{R}_{\text{conv}},\mathcal{R}_{\text{press}},\mathcal{V}_{t,x}, W_1)$  to the system (\ref{eq:aEuler})-(\ref{eq:cEuler}) in the sense of Definition \ref{E:dfn} satisfying properties (a)-(j), furthermore, there exists  $((\Omega,\FF, (\FF_t)_{t\geq 0},\p),\varrho,\vec m,S, \mathcal{R}_{\text{conv}},\mathcal{R}_{\text{press}},\mathcal{V}_{t,x},W_2)$ in the sense of Definition \ref{E:dfn}  satisfying property (k) and an $\FF_0$-measurable random variables $[\mathcal{S}_0, \mathbf{R}_0]$ for $\Ss \in W^{-l,2}(\T)\cap L^{\gamma}(\T)$ and $\vec R \in W^{-k,2}(\T,\R^{15})$ such that 
\begin{equation}\label{marklawB}
\PP= \LL\left[\varrho, \vec m=\varrho\vec u, \mathcal{S}_0+\int_{0}^{\cdot}\mathcal{S}\,\dd s,\vec R_0+\int_{0}^{\cdot}\vec{R}\,\dd s\right]\in \mathrm{Prob}[\Omega],
\end{equation}
 where $W_1$ and $W_2$ correspond to Wiener process generated by momentum equation and energy equality, respectively.
 \end{enumerate}
 \end{prop}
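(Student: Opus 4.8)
The plan is to prove the two implications separately. In both directions the verification of the individual items is largely mechanical once the correct structural map is identified, and the real work is confined to reconstructing the driving noise on the canonical trajectory space; the strategy parallels the reconstruction arguments of \cite{BrMo,HoFeBr,DbEfMh}.

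\textbf{From Definition \ref{E:dfn} to Definition \ref{dfnMarkov}.} Given the tuple $((\Omega,\FF,(\FF_t)_{t\geq0},\p),\varrho,\vec m,S,\mathcal R_{\mathrm{conv}},\mathcal R_{\mathrm{press}},\mathcal V_{t,x},W)$ and the $\FF_0$-measurable datum $[\Ss_0,\vec R_0]$, I would push the solution forward under the time-integration map $\Xi$ sending the tuple to $\big(\varrho,\ \vec m,\ \Ss_0+\int_0^\cdot S\,\dd s,\ \vec R_0+\int_0^\cdot(\mathcal R_{\mathrm{conv}},\mathcal R_{\mathrm{press}},\mathcal V)\,\dd s\big)$ and set $\PP=\LL[\Xi]$. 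First one checks that $\Xi$ takes values in $\Omega=C_{\mathrm{loc}}([0,\infty);W^{-k,2}(\T))$ and is Borel measurable and adapted (its restriction to $[0,t]$ depends only on the argument's restriction to $[0,t]$): continuity in $W^{-k,2}$ of the first two slots is items (c)--(d) of Definition \ref{E:dfn}, while for the last two slots it follows from integrating the uniformly bounded measures in time. The properties (a)--(f) of Definition \ref{dfnMarkov} are then obtained by inspection: (a) is the regularity (c)--(f) of Definition \ref{E:dfn} together with the elementary fact that a primitive of an $L^\infty_{\mathrm{loc}}$-in-time object is Lipschitz in time; (c) is (h), (e) is (j); (b) and the drift part of (f) follow from the energy balance (k) and the coercive lower bound on the energy; and (d) and the martingale assertion in (f) follow from (i) and (k) respectively, since on the original basis $\mathscr M(\varphi)_\tau=\int_0^\tau\int_{\T}\varphi\cdot\varrho\phi\,\dd x\,\dd W$ and $\mathscr E_\tau=\int_0^\tau\int_{\T}\vec m\cdot\phi\,\dd x\,\dd W$ are square-integrable continuous $(\FF_t,\p)$-martingales whose quadratic variations are the stated expressions by It\^o's isometry, and the martingale property transfers to $((\B_t)_{t\geq0},\PP)$ because $\Xi$ is adapted; here one appeals to Proposition \ref{quad} to pass between the two filtrations.

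\textbf{From Definition \ref{dfnMarkov} to Definition \ref{E:dfn}.} Now I would work on the canonical basis $(\Omega,\B,(\B_t)_{t\geq0},\PP)$, put $\varrho:=\xi^1$, $\vec m:=\xi^2$, and recover the defect measures by differentiation in time, $S:=\partial_t\xi^3$ and $(\mathcal R_{\mathrm{conv}},\mathcal R_{\mathrm{press}},\mathcal V):=\partial_t\xi^4$, which is legitimate because $\xi^3$ and $\xi^4$ are (weak-$\ast$) Lipschitz in time by (a), their adaptedness being understood in the random-distribution sense of \cite{FrBrHo}. The momentum equation in the form (i), respectively the energy balance (k), are then produced by a martingale representation argument: polarising the quadratic variation in (d) shows that $\{\mathscr M(\varphi)\}_\varphi$ is a continuous martingale field with cross-variation $\langle\langle\mathscr M(\varphi),\mathscr M(\psi)\rangle\rangle=\int_0^\cdot\sum_{k}\big(\int_{\T}\xi^1\phi(e_k)\cdot\varphi\big)\big(\int_{\T}\xi^1\phi(e_k)\cdot\psi\big)\,\dd t$, which is exactly the covariance of $\int_0^\cdot\int_{\T}(\,\cdot\,)\,\varrho\phi\,\dd W$; hence, after possibly enlarging the probability space, a standard martingale representation theorem yields a cylindrical Wiener process $W_1$ for which (i) holds, and the resulting tuple satisfies (a)--(j). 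Applying the same representation to the martingale $\mathscr E$ of (f) --- after first regularising the a.e.-in-time energy into a genuine continuous process via the averaging-in-$(s,t)$ device used in \cite{BrMo,HoFeBr} --- yields a (possibly distinct) cylindrical Wiener process $W_2$ realising (k). In both cases $\PP$ is recovered as the joint law (\ref{marklawB}) with $[\Ss_0,\vec R_0]:=(\xi^3_0,\xi^4_0)$.

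\textbf{Main obstacle.} The delicate part is the reconstruction step, for three intertwined reasons. The ratio $\vec m/\varrho$ is defined only on the non-degenerate set $Y$ and only measurably there, so the quadratic variations must be manipulated through the metric (\ref{metric}) rather than naively. The quantities $S$ and the $\mathcal R$'s are equivalence classes in time, so the identification $\partial_t\xi^3=S$ and all adaptedness statements must be carried out in the random-distribution framework. And, crucially, the canonical variables $[\varrho,\vec m,\Ss,\vec R]$ do not retain enough information to single out one driving noise --- the martingale problem records the quadratic variations of $\mathscr M(\varphi)$ and of $\mathscr E$ but not their joint variation --- so the momentum and energy martingales are represented by a priori \emph{different} Wiener processes $W_1\neq W_2$; this is exactly why part (2) of the statement must carry two solution tuples sharing the same law $\PP$.
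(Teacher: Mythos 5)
Your proposal matches the paper's intended argument, which is not written out in detail but is explicitly deferred to the corresponding result in \cite{DbEfMh}: direction (1) by pushing the solution forward under the time-integration map and checking the items of Definition \ref{dfnMarkov} one by one, and direction (2) by recovering $S$ and the defect measures as time derivatives of $\xi^3,\xi^4$ and reconstructing the noise via the martingale representation theorem. In particular you correctly isolate the one point the paper itself singles out, namely that the representation theorem must be applied separately to $\mathscr M(\varphi)$ and to $\mathscr E$, so that the momentum equation and the energy balance are a priori driven by two different Wiener processes $W_1\neq W_2$ — which is exactly why part (2) of the statement is phrased with two solution tuples.
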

 
 The proof of Proposition \ref{equdfns} follows the same arguments as presented in \cite{DbEfMh} with appropriate adjustment to our system (\ref{eq:aEuler})-(\ref{eq:cEuler}). Moreover, on showing part (2) implies part (1) in Proposition \ref{equdfns}  we need two Wiener processes since we apply the martingale representation theorem (see \cite{Prato}, Theorem 8.2) twice.
 
 \section{Markov selection}
 In this section we state and prove the strong Markov selection to the complete stochastic Euler system (\ref{eq:aEuler})-(\ref{eq:cEuler}). Let $y\in Y$ be an admissible initial data(condition), we denote by $\PP_{y}$ a solution to the martingale problem associated with (\ref{eq:aEuler})-(\ref{eq:cEuler}) starting on $y$ at time $t=0$; that is, the marginal of $\PP_y$ at $t=0$ is $\Lambda_y$. We start off with the definition of strong Markov family;
 
 \begin{dfn}
 A family $(\PP_y)_{y \in Y}\in \mathrm{Prob}[\Omega]$ of probability measures is called a strong Markov selection family provided
 \begin{itemize}
     \item [(1)] For every $A \in \B$, the mapping $y\mapsto \PP_y(A)$ is $\B(Y)/\B([0,1])$-measurable.
     \item[(2)]For every finite $(\B_t)_{t\geq0}$-stopping time $T$, every $y \in Y$ and $\PP_y$-a.s. $\omega \in \Omega$
     \[
     \PP_y|_{\B_T}^{\omega} = \PP_{y}\circ\Phi_{-\tau}^{-1}.
     \]
 \end{itemize}
 \end{dfn}
 
 Accordingly, a strong Markov family follows from the so-called pre-Markov family via a selection procedure. Finally, we have all we need to state the following theorem.
 
 \begin{thm}\label{Mselection}
 Assume (\ref{eq:HS}) and (\ref{fyB}) holds. Then there exists a family $\{\PP_y\}_{y\in Y}$ of solutions to the martingale problem associated to (\ref{eq:aEuler})-(\ref{eq:cEuler}) in the sense of Definition \ref{dfnMarkov} with a.s. Markov property (as defined in Definition \ref{almostMark})
 \end{thm}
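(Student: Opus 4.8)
The plan is to fit the problem into the abstract framework of Theorem \ref{Mthm}: I will exhibit an almost sure pre-Markov family $\{\mathcal{C}(y)\}_{y\in Y}$ with nonempty, convex and compact values, and then simply invoke the selection theorem. For an admissible initial datum $y=[\varrho_0,\vec m_0,\Ss_0,\vec R_0]\in Y$ let $\mathcal{C}(y)\subset\mathrm{Prob}_Y[\Omega]$ be the collection of all $\PP$ which solve the martingale problem of Definition \ref{dfnMarkov}, have marginal $\delta_y$ at $t=0$, and satisfy the additional uniform energy bound $\E^{\PP}[\sup_{t\in[0,T]}\mathfrak{E}_t^{q}]\le C(1+\|y\|^{q})$ for all $q\ge 1$ and $T>0$, inherited from Proposition \ref{propE} (in its $\varepsilon\to0$ form). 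Nonemptiness of $\mathcal{C}(y)$ for deterministic $y$ is Theorem \ref{ExMainr} combined with the dictionary between the two notions of solution established in Proposition \ref{equdfns}. Convexity is immediate: all defining conditions — the weak formulation (c), the martingale property (d) with prescribed quadratic variation, the entropy inequality (e), the energy martingale (f) — are stable under convex combinations of laws, where for the martingale/quadratic-variation statements one first rewrites them in the $\PP$-affine form provided by Proposition \ref{quad}.

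Compactness of $\mathcal{C}(y)$ in $\mathrm{Prob}[\Omega]$ follows by applying the tightness machinery of Section \ref{mvsproof} — the a priori bounds (\ref{priori}), (\ref{estimate}) and the compact embeddings (\ref{eqnA}), (\ref{eqnB}) — uniformly over $\mathcal{C}(y)$, together with the standard fact that the solution set of a martingale problem carrying a fixed uniform energy bound is weakly closed (one passes to the limit in (c)--(f) exactly as in the proof of Theorem \ref{ExMainr}, using Proposition \ref{skorokhod}-type arguments). Measurability of $y\mapsto\mathcal{C}(y)$ with values in the compact subsets of $\mathrm{Prob}[\Omega]$ is obtained from the measurable dependence of the data and a measurable-selection argument as in \cite{FlROm,GoRo}.

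Next I would verify the two clauses of Definition \ref{almostMark}. For the disintegration property I use Theorem \ref{Dis}: given $\PP\in\mathcal{C}(y)$ and a time $\tau$, the regular conditional law $\Phi_{-\tau}\PP|_{\B_\tau}^{\omega}$ is again a martingale solution started from $\omega(\tau)$ — the formulations (c)--(e) localise in time, while (d) and (f), including their quadratic variations, survive conditioning precisely by Proposition \ref{quad}, the coefficients of the system being time-homogeneous so that the shift $\Phi_{-\tau}$ is legitimate; the energy bound is inherited for $\PP$-a.a. $\omega$ via the tower property. The exceptional Lebesgue-null set $\mathfrak{Z}\subset(0,\infty)$ is forced here: the entropy relation (\ref{eq:Entr}) and the energy identity hold only for a.a. $t$, so only at a.e. $\tau$ is $\omega(\tau)$ guaranteed to lie in $Y$ and to be a genuine restart point. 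For reconstruction I use Theorem \ref{Rec}: if $\omega\mapsto Q_\omega$ is $\B_\tau$-measurable with $\Phi_{-\tau}Q_\omega\in\mathcal{C}(\omega(\tau))$, then the glued measure $\PP\otimes_\tau Q$ satisfies all conditions of Definition \ref{dfnMarkov} — the continuity and entropy relations because they concatenate as pathwise identities, the two martingale conditions because a process that is a martingale on $[0,\tau]$ and, conditionally, a martingale after $\tau$ with matching quadratic variation is a martingale on $[0,\infty)$, and the energy bound by the integrability statement in Proposition \ref{quad}(b). With both properties in hand, Theorem \ref{Mthm} produces a measurable map $y\mapsto\PP_y$ with $\PP_y\in\mathcal{C}(y)$ enjoying the almost sure Markov property, which is the assertion of the theorem.

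The main obstacle is the reconstruction step, and inside it the treatment of the non-classical fields $\mathcal{R}_{\mathrm{conv}},\mathcal{R}_{\mathrm{press}},\mathcal{V}_{t,x}$. Since these are merely equivalence classes in time, the concatenation must be carried out on the continuous time-integrated variables $\Ss=\int_0^\cdot S\,\dd s$ and $\vec R=\int_0^\cdot(\mathcal{R}_{\mathrm{conv}},\mathcal{R}_{\mathrm{press}},\mathcal{V}_{t,x})\,\dd s$, after which one must check that differentiating back in time recovers fields still satisfying (\ref{eq:Entr}) and entering the momentum martingale (d) correctly; the superfluous initial values of $[\Ss,\vec R]$ are carried along only as bookkeeping, as remarked after Definition \ref{dfnMarkov}. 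A secondary delicate point, peculiar to the \emph{complete} Euler system, is that one disposes of an energy \emph{equality}: one must ensure that the compactness and closedness of $\mathcal{C}(y)$ remain compatible with the augmentation of $\mathcal{R}_{\mathrm{press}}$ by a spatially homogeneous $h(t)\,\dd x$ used in Section \ref{mvsproof} to upgrade the energy inequality to equality — harmless on the torus, since $\int_{\T}h(t)\,\mathrm{div}_x\varphi\,\dd x=0$ and no weak formulation is affected.
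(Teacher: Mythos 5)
Your proposal follows essentially the same route as the paper: defining $\mathcal{C}(y)$ as the set of laws solving the martingale problem of Definition \ref{dfnMarkov} with prescribed initial marginal, establishing nonemptiness via Theorem \ref{ExMainr} and Proposition \ref{equdfns}, convexity and compactness via the a priori bounds and tightness machinery, and then verifying the disintegration and reconstruction properties of Definition \ref{almostMark} through Theorems \ref{Dis} and \ref{Rec} together with Proposition \ref{quad}, before invoking the abstract selection Theorem \ref{Mthm}. Your identification of the defect measures (handled through the time-integrated variables $\Ss$, $\vec R$) as the delicate point in concatenation likewise matches the paper's treatment.
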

 
 We set $y =(y^1,\vec y^2,\vec y^3,\vec y^4) \in Y$ and denote by $\CC(y)$ the set of probability laws $\PP_{y}\in \mathrm{Prob}[\Omega]$ solving the martingale problem associated to (\ref{eq:aEuler})-(\ref{eq:cEuler}) with initial law [$\Lambda_{y}$]. The proof of Theorem \ref{Mselection} follows from applying abstract result of Theorem \ref{Mthm}. In particular, we show that the family $\{\CC(y)\}_{y\in Y}$ of solutions to the martingale problem satisfies the disintegration and reconstruction properties in Definition \ref{almostMark}.
 
 \begin{lem}
 For each $y =(y^1,\vec y^2,\vec y^3,\vec y^4) \in Y$. The set $\CC(y)$ is \textit{non-empty} and \textit{convex}. Furthermore, for every $\PP\in \CC(y)$, the marginal at every time $t\in (0,\infty)$ is supported on $Y$.
 \end{lem}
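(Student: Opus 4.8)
The three assertions — nonemptiness, convexity, and support of the marginals in $Y$ — are of quite different character, so I would treat them separately. For \emph{nonemptiness}, fix $y=(y^1,\vec y^2,\vec y^3,\vec y^4)\in Y$. By definition of $Y$ we have $y^1\ge 0$, $y^1\not\equiv 0$ and $\int_\T |\vec y^2|^2/y^1\,\dd x<\infty$, so $y$ is an admissible initial state. The strategy is to build a random initial datum $\Lambda_y$ concentrated at $y$: take $\varrho(0,\cdot)=y^1$, $\vec m(0,\cdot)=\vec y^2$, $S(0,\cdot)$ determined by $y^3$ (and $\Ss_0=y^3$, $\vec R_0=\vec y^4$ as the superfluous initial values of the auxiliary time-integrated variables). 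Since $y$ is a single deterministic point, $\Lambda_y$ is a Dirac mass and trivially satisfies the moment hypotheses of Theorem~\ref{ExMainr}; in particular the finite-energy condition $\int_\T |\vec y^2|^2/y^1\,\dd x<\infty$ is exactly what makes the initial energy $E_0$ finite, so the moment estimate $\int \|\tfrac12 |\vec m|^2/\varrho + c_v\varrho^\gamma\exp(S/(c_v\varrho))\|_{L^1}^p\,\dd\Lambda_y<\infty$ holds for all $p\ge 1$. Theorem~\ref{ExMainr} then yields a dissipative measure-valued martingale solution with this initial law, and Proposition~\ref{equdfns}(1) converts it into an element $\PP\in\mathrm{Prob}[\Omega]$ solving the martingale problem of Definition~\ref{dfnMarkov} with initial law $\Lambda_y$; hence $\PP\in\CC(y)$ and $\CC(y)\ne\emptyset$.

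\textbf{Convexity.} Let $\PP_0,\PP_1\in\CC(y)$ and $\lambda\in[0,1]$; I claim $\PP_\lambda:=\lambda\PP_1+(1-\lambda)\PP_0\in\CC(y)$. The defining conditions (a)--(c) and (e) of Definition~\ref{dfnMarkov} are of the form ``$\PP(\text{some Borel set})=1$'' or ``some pathwise inequality holds $\PP$-a.s.'', and such conditions are preserved under convex combinations of probability measures (if each of two measures charges a set with probability one, so does any convex combination). The only genuinely linear-structure-sensitive items are (d) and (f), the martingale and quadratic-variation requirements for $\mathscr M(\varphi)$ and $\mathscr E$. Here the key observation is that the martingale property together with the prescribed quadratic variation is equivalent, via Proposition~\ref{quad} (or directly, by the standard fact that ``$N$ is a martingale with $\langle\!\langle N\rangle\!\rangle=A$'' $\iff$ ``$N$ and $N^2-A$ are martingales''), to the assertion that $\mathscr M(\varphi)$ and $\mathscr M(\varphi)^2 - \tfrac12\int_0^\cdot\sum_k(\int_\T \xi^1 e_k\cdot\varphi)^2\,\dd t$ are $((\B_t),\PP)$-martingales, i.e.\ to a family of \emph{linear} identities of the form $\E^\PP[\,h(\text{past})\,(\text{increment})\,]=0$ for bounded $\B_s$-measurable $h$. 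Linear identities of this type are manifestly stable under convex combinations, so $\mathscr M(\varphi)$ remains a square-integrable $((\B_t),\PP_\lambda)$-martingale with the same quadratic variation; the same argument applies to $\mathscr E$. Hence $\PP_\lambda\in\CC(y)$ and $\CC(y)$ is convex.

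\textbf{Support of the marginals.} Finally, for $\PP\in\CC(y)$ and $t\in(0,\infty)$ I must show that the law $\PP\circ\xi_t^{-1}$ is concentrated on $Y$, i.e.\ that $\PP$-a.s.\ $\xi_t^1\ge 0$, $\xi_t^1\not\equiv 0$, and $\int_\T |\xi_t^2|^2/\xi_t^1\,\dd x<\infty$. Nonnegativity $\xi_t^1\ge 0$ is part of Definition~\ref{dfnMarkov}(a). For the finite-energy bound, I would invoke the energy inequality built into the solution: from Definition~\ref{dfnMarkov}(b) and (f), the total energy $\mathfrak E_t$ — which dominates $\int_\T \tfrac12|\xi_t^2|^2/\xi_t^1\,\dd x$ — satisfies, after taking expectations and using the bound $\|\sqrt{\xi^1}\phi\|_{L_2}^2\lesssim c(\phi)\|\xi^1\|_{L^1}$ from \eqref{fyB} together with conservation of mass $\|\xi_t^1\|_{L^1}=\|y^1\|_{L^1}$, an estimate $\E^\PP[\mathfrak E_t]\le \mathfrak E_0 + C(\phi,\|y^1\|_{L^1})t<\infty$ (the stochastic integral being a martingale of zero mean); in particular $\E^\PP\!\int_\T |\xi_t^2|^2/\xi_t^1\,\dd x<\infty$, so $\int_\T |\xi_t^2|^2/\xi_t^1\,\dd x<\infty$ $\PP$-a.s. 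The remaining point, $\xi_t^1\not\equiv 0$, again follows from mass conservation (condition (c) with $\psi\equiv 1$ gives $\int_\T \xi_t^1\,\dd x=\int_\T y^1\,\dd x>0$ since $y\in Y$). \textbf{The main obstacle} I anticipate is the measurability/technical bookkeeping needed to apply Proposition~\ref{equdfns}(1) cleanly for \emph{every} $y\in Y$ with the auxiliary initial data $[\Ss_0,\vec R_0]=[y^3,\vec y^4]$, and, for the convexity argument, making sure the ``linear identity'' reformulation of the quadratic-variation condition in (d) and (f) is genuinely available here — this is exactly where the equivalence in Proposition~\ref{quad} is used — rather than any deep analytic difficulty; the energy-bound step is routine given \eqref{fyB} and mass conservation.
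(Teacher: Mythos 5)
Your proposal is correct and follows essentially the same route as the paper: non-emptiness via Theorem \ref{ExMainr} combined with Proposition \ref{equdfns}, convexity from the linearity in $\PP$ of the defining conditions, and support of the marginals from the energy property together with mass conservation and the nonnegativity in Definition \ref{dfnMarkov}(a). Your treatment is in fact more careful than the paper's three-sentence sketch, in particular in recasting the martingale-plus-quadratic-variation conditions (d) and (f) as a pair of linear martingale identities (``$\mathscr M$ and $\mathscr M^2-\Psi$ are martingales''), which is exactly the point needed to see that these conditions survive convex combinations.
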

 
 \begin{proof}
 Assuming $y\in Y$, application of Theorem [existence] yields existence of a martingale solution to the problem (\ref{eq:aEuler})-(\ref{eq:cEuler}) in the sense of Definition \ref{E:dfn}. Consequently, by Proposition \ref{equdfns} we infer that for each $y\in Y$ the set $\CC(y)$ is non-empty. For some $\lambda \in (0,1)$, let $\PP_1,\PP_2 \in \CC(y)$ such that $\PP=\lambda \PP_1+(1-\lambda)\PP_2$. Then convexity follows from noting that properties of Definition \ref{E:dfn} involve integration with respect to the elements of $\CC(y)$. In view of Definition \ref{E:dfn} property (f) (energy equality), the marginal $\PP\in \CC(y)$ at every $t\in (0,\infty)$ is supported in $Y$.
 \end{proof}
 
 For compactness we consider the following Lemma.
 \begin{lem}
 Let $y\in Y$. Then $\CC(y)$ is a compact set and the map $\CC:Y\to \mathrm{Comp}(\mathrm{Prob}[\Omega])$ is Borel measurable.
 
 \end{lem}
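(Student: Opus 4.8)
The plan is to establish the two assertions separately and to let the second lean on the first: I prove that $\CC(y)$ is tight and closed in $\mathrm{Prob}[\Omega]$ (hence compact by Prokhorov), and then that $\CC$ has closed graph, which yields its Borel measurability as a compact‑set–valued map. For tightness, observe that by Proposition \ref{equdfns} every $\PP\in\CC(y)$ is the law of the time‑integrated reformulation of a dissipative measure‑valued martingale solution starting from the fixed deterministic datum $y$, i.e.\ with initial law $\Lambda_y=\delta_y$. Consequently the energy inequality and the a priori bounds of Proposition \ref{propE}, together with the H\"older‑in‑time estimates and the weak‑$*$ bounds on the defect/Young‑measure objects derived in Section \ref{mvsproof}, hold with constants depending only on $y$ (through its energy) and on $\phi$, \emph{uniformly over} $\PP\in\CC(y)$. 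Feeding these into the compact embeddings (\ref{eqnA})--(\ref{eqnB}) and the weak‑$*$ compactness of the measure‑valued coordinates, and running the Markov‑inequality argument of Section \ref{mvsproof} verbatim, one produces for each $\eta>0$ a compact $K_\eta\subset\Omega$ with $\PP(K_\eta)\ge 1-\eta$ for all $\PP\in\CC(y)$; so $\CC(y)$ is relatively compact.

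To see that $\CC(y)$ is closed, take $\PP_n\in\CC(y)$ with $\PP_n\rightharpoonup\PP$ and argue $\PP\in\CC(y)$. Using Jakubowski's version of the Skorokhod representation theorem (as in Proposition \ref{skorokhod}) I would realise the $\PP_n$ and $\PP$ through $\widetilde{\p}$‑a.s.\ convergent random variables on a single stochastic basis and pass to the limit in each item of Definition \ref{dfnMarkov}: property (c) (the continuity equation) is continuous on the path space and passes immediately; properties (d) and (f) (the momentum and energy martingales) are recovered by reidentifying the limiting processes as $(\B_t)$‑martingales with the prescribed quadratic variations through the elementary cross‑ and quadratic‑variation matching of \cite{BZ} combined with Proposition \ref{quad}, exactly as in the passage to the limit $\varepsilon_m\to 0$; property (e) (the entropy inequality) survives by weak lower semicontinuity of the convex functionals involved; and properties (a)--(b) are stable under the relevant weak and weak‑$*$ convergences. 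The marginal at $t=0$ of $\PP$ is again $\delta_y$ since each $\PP_n$ starts at the fixed deterministic point $y$. Hence $\PP\in\CC(y)$, so $\CC(y)$ is compact.

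For measurability I would prove the closed‑graph property: whenever $y_n\to y$ in $Y$, $\PP_n\in\CC(y_n)$ and $\PP_n\rightharpoonup\PP$, then $\PP\in\CC(y)$. This is the argument just described, the only additions being that the a priori constants are \emph{locally} uniform in $y$ — they depend on $y$ only through norms that remain bounded along convergent sequences — and that $\delta_{y_n}\rightharpoonup\delta_y$. Granting the closed graph, the set‑valued map $\CC$ with nonempty compact values is upper hemicontinuous, and an upper hemicontinuous compact‑valued map into a Polish space is Borel measurable once $\mathrm{Comp}(\mathrm{Prob}[\Omega])$ is equipped with the Hausdorff metric; equivalently, $\{y:\CC(y)\cap O\neq\emptyset\}$ is Borel for every open $O\subset\mathrm{Prob}[\Omega]$. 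This is precisely the measurability required by Theorem \ref{Mthm}, and is the route followed in \cite{DbEfMh,FlROm,GoRo}.

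I expect the genuine difficulty to lie in the closedness/closed‑graph step, specifically in reidentifying, under mere weak convergence of laws, the limiting momentum and energy martingales together with their quadratic variations: the ambient path space is only quasi‑Polish because of the Young‑measure and defect‑measure coordinates, and the quadratic variations are quadratic in the density, so a soft continuous‑mapping argument is unavailable and one must combine the elementary method of \cite{BZ} with the weak‑$*$ continuity of the defect measures and the uniform moment bounds. A secondary point requiring care is that the limiting $\mathcal{R}_{\mathrm{conv}},\mathcal{R}_{\mathrm{press}}$ retain nonnegativity and $\mathcal{V}_{t,x}$ remains a probability measure, but this follows from convexity exactly as in Section \ref{mvsproof}.
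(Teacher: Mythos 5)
Your proposal is correct and follows essentially the same route as the paper's (very terse) proof: the paper also reduces the lemma to the stability claim that $y_n\to y$ in $(Y,d_F)$ and $\PP_n\in\CC(y_n)$ imply weak subsequential convergence to some $\PP\in\CC(y)$, established via the tightness and limit-identification machinery of the existence proof together with Proposition~\ref{quad}, and then obtains measurability from the compact-set-valued-map criterion (Theorem~12.1.8 in \cite{STVAR}), which is the same content as your upper-hemicontinuity/Hausdorff-metric argument. The only remark worth making is that in the Markov-selection setup the path space $\Omega=C_{\mathrm{loc}}([0,\infty);W^{-k,2}(\T))$ is genuinely Polish (the defect and Young measures enter only through their continuous time integrals), so your worry about quasi-Polishness is less acute here than in Section~\ref{mvsproof}.
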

 \begin{proof}
 The lemma follows from the claim:
 Let $(y_n = (\varrho_n,\vec m_n,\Ss_n,\vec R_n))_{n\in \N} \subset Y$ be a sequence converging in $Y$ to some $(y = (\varrho,\vec m,\Ss,\vec R))$ with respect to the metric $d_F$ in (ref). Let $\PP_n \in C(y_n), n\in \N$. Then for each $(\PP_n)_{n\in N}$, the sequence converges to some $\PP \in \CC(y)$ weakly in $\mathrm{Prob}[\Omega]$. While measurability of the map $y\mapsto \CC(y)$ follows from using Theorem 12.1.8 in \cite{STVAR} for the metric space $(Y,d_F)$.  Accordingly, the claim follows from Theorem \ref{ExMainr}. Consequently, by Proposition \ref{quad} $\PP$ is a solution to a martingale problem with initial law $\Lambda$. Therefore, $\PP\in \CC(y)$ as required.
 \end{proof}
 
 Finally, we verify that $\CC(y)$ has disintegration and reconstruction property in sense of Definition  \ref{almostMark}.
 
 \begin{lem}\label{dis}
 The family $\{\CC(y)\}_{y\in Y}$ satisfies the disintegration property
 of Definition \ref{almostMark}.
 \end{lem}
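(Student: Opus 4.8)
The plan is to combine the disintegration theorem (Theorem~\ref{Dis}) with the martingale transfer criterion (Proposition~\ref{quad}), in the spirit of \cite{FlROm,GoRo,DbEfMh}. Fix $y\in Y$ and $\PP\in\CC(y)$. Recall from the lemmas above that the time-$t$ marginal of $\PP$ is supported in $Y$ for every $t>0$; in particular $\omega(\tau)\in Y$ for $\PP$-a.a.\ $\omega$, so $\omega(\tau)$ is an admissible initial condition. Applying Theorem~\ref{Dis} at a (deterministic) time $\tau>0$ yields the regular conditional probabilities $\PP|_{\B_\tau}^{\omega}$, with $\omega\mapsto\PP|_{\B_\tau}^{\omega}$ being $\B_\tau$-measurable and, by part~(a) of that theorem, $\PP|_{\B_\tau}^{\omega}$ concentrated on paths passing through $\omega(\tau)$ at time $\tau$. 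Consequently $\Phi_{-\tau}\PP|_{\B_\tau}^{\omega}\in\mathrm{Prob}[\Omega]$ has initial marginal $\delta_{\omega(\tau)}$, and it remains to verify that it solves the martingale problem of Definition~\ref{dfnMarkov}, which then gives $\Phi_{-\tau}\PP|_{\B_\tau}^{\omega}\in\CC(\omega(\tau))$.

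The choice of the exceptional set $\mathfrak{Z}\subset(0,\infty)$ is forced by the fact that several objects in Definition~\ref{dfnMarkov} are classes of equivalence in time rather than genuine processes: the parametrised data $\bxi^{4}=(\mathcal{R}_{\mathrm{conv}},\mathcal{R}_{\mathrm{press}},\mathcal{V}_{t,x})$ are only $W^{1,\infty}$ in time and the total energy $\mathfrak{E}$ only $L^{\infty}_{\mathrm{loc}}$. I would take $\mathfrak{Z}$ to be the complement of the common set of Lebesgue points of $t\mapsto\mathfrak{E}_{t}$, of $t\mapsto\langle(\bxi^{4}_{\nu})_{t,x};Z(\cdot)\rangle$ as $Z$ runs over a countable determining family of nondecreasing functions in $BC(\R)$, and of $t\mapsto\mathrm{tr}\,\bxi^{4}_{\mathrm{conv}}(t)$ and $t\mapsto\mathrm{tr}\,\bxi^{4}_{\mathrm{press}}(t)$; then $\mathfrak{Z}$ has zero Lebesgue measure, is independent of $\omega$, and by Fubini one null set works for $\PP$-a.a.\ $\omega$ simultaneously. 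For $\tau\notin\mathfrak{Z}$ the values of these objects at $\tau$ are the genuine one-sided limits, so they legitimately play the role of the ``$t=0$'' data of the time-shifted system.

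Fix $\tau\notin\mathfrak{Z}$. Conditions (a)--(c) and (e) of Definition~\ref{dfnMarkov} are either $\PP$-a.s.\ sample-path regularity statements or pathwise (deterministic) identities and inequalities involving only $\omega|_{[\tau,\infty)}$, so they pass to $\Phi_{-\tau}\PP|_{\B_\tau}^{\omega}$, the Lebesgue-point property of $\tau$ guaranteeing that the boundary terms at the new initial time are the correct limits. For the martingale conditions (d) and (f) I would apply Proposition~\ref{quad} with $t_{0}=\tau$: since $\mathscr{M}(\boldsymbol{\varphi})$, respectively $\mathscr{E}$, is a square-integrable $((\B_{t})_{t\ge 0},\PP)$-martingale with the prescribed quadratic variation, part~(b) of that proposition yields that, for $\PP$-a.a.\ $\omega$, the same process restricted to $[\tau,\infty)$ is a square-integrable $((\B_{t})_{t\ge\tau},\PP|_{\B_\tau}^{\omega})$-martingale with the same quadratic variation. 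Splitting each time integral as $\int_{0}^{t}=\int_{0}^{\tau}+\int_{\tau}^{t}$, the first summand is $\B_\tau$-measurable, hence $\PP|_{\B_\tau}^{\omega}$-a.s.\ constant, while the boundary terms evaluated at $t=\tau$ reduce to the data $\omega(\tau)$; relabelling time through $\Phi_{-\tau}$ then identifies the resulting increment process with $\mathscr{M}(\boldsymbol{\varphi})$, respectively $\mathscr{E}$, built from the shifted canonical process, carrying the correct quadratic variation. Since moreover $\omega\mapsto\Phi_{-\tau}\PP|_{\B_\tau}^{\omega}$ is $\B_\tau$-measurable, this is precisely the disintegration property of Definition~\ref{almostMark}(a).

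The main obstacle is exactly the point already flagged in the introduction: the defect measures and the total energy are not honest stochastic processes but classes of equivalence in time, so their ``restart values'' at $\tau$ make sense only for Lebesgue-a.e.\ $\tau$; reconciling this with the requirement of feeding $\omega(\tau)$ as initial data for the shifted martingale problem is what dictates the null set $\mathfrak{Z}$, and is the structural reason one can only hope for an \emph{almost sure} pre-Markov family rather than a genuine Markov family. A secondary subtlety is that the martingale part of the momentum balance and that of the energy balance are driven by two \emph{different} Wiener processes (cf.\ Proposition~\ref{equdfns}(2) together with the martingale representation theorem), so Proposition~\ref{quad} has to be applied separately to $\mathscr{M}(\boldsymbol{\varphi})$ and to $\mathscr{E}$ and the two descriptions then reconciled on the shifted probability space.
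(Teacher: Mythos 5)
Your proposal is correct and follows essentially the same route as the paper: Theorem \ref{Dis} supplies the regular conditional probabilities, the pathwise properties (a)--(c), (e) are transferred through the disintegration identity, and the martingale properties (d), (f) are transferred via Proposition \ref{quad}, with nullsets collected over countable dense families of test data. The only cosmetic differences are that you are more explicit than the paper about constructing the exceptional time set $\mathfrak{Z}$ via Lebesgue points of the energy and the defect measures, while the paper is more explicit about taking a countable dense family $(\varphi_n)$ of test functions in step (d) so that the $\varphi$-dependent nullsets from Proposition \ref{quad} can be united into a single one — a step you carry out for the entropy inequality but should also state for the momentum martingale.
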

 
 \begin{proof}
 Fix $y\in Y$, $\PP\in \CC(y)$ and let $T$ be $\B_t$-stopping time. In view of Theorem \ref{Dis}, we know there exists a family of probability measures;
 \[
 \Omega \ni \tilde{\omega}\mapsto \PP|_{\B_T}^{\tilde{\omega}}\in \mathrm{Prob}[\Omega^{[T,\infty)}]
 \]
 such that 
 \begin{equation}\label{ameas}
     \omega(T)=\tilde\omega(T), \, \PP|_{\B_T}^{\tilde{\omega}}\text{-a.s.},\qquad \PP(\omega|_{[0,T]}\in A,\omega|_{[T,\infty)}\in B)= \int_{\tilde{\omega}|_{[0,T]}\in A}\PP|_{\B_T}^{\tilde{\omega}}(B)\,\dd \PP(\tilde{\omega}),
 \end{equation}
  for any Borel sets: $A\subset\Omega^{[0,T]}$ and  $B\subset\Omega^{[T,\infty)}$. Here, we want to show that 
  \[
   \Phi_{-\tau}\PP|_{\B_T}^{\tilde{\omega}} \in \CC(\omega(T))\quad\text{for $\tilde\omega \in \Omega, \PP$-a.s.}
  \]
  Thus we are seeking an $\PP|_{\B_T}^{\tilde{\omega}}$-nullset $N$ outside of which properties (a)-(f) of Definition \ref{dfnMarkov} holds for $\PP|_{\B_T}^{\tilde{\omega}}$. To begin with, set $N_a,\ldots,N_f$ to each of the properties (a)-(f) of Definition \ref{dfnMarkov}, respectively, and let $N= N_a\cup\cdots\cup N_f$. Arguing similarly along the lines of Lemma 4.4 in \cite{FlROm} and \cite{DbEfMh} we have the following observations:
  \begin{itemize}
      \item[(1)] Set
      \begin{eqnarray*}
           H_T&=&\bigg\{\omega \in \Omega:\omega|_{[0,T]} \in C([0,T];L^{\gamma}(\T))\times C([0,T];L^{\frac{2\gamma}{\gamma+1}}(\T))\\
           && \times W^{1,2}([0,\infty),L^{\gamma}(\T))\cap BV_{w, \text{loc}}^{2}(0,\infty;W^{-l,2}(\T))
           \times(W_{\text{weak-*}}^{1,\infty}(0,\infty;\mathcal{M}^{+}(\T)))^2\bigg\}\\
            H^T&=&\bigg\{\omega \in \Omega:\omega|_{[T,\infty)} \in C([0,\infty);L^{\gamma}(\T))\times C([0,\infty);L^{\frac{2\gamma}{\gamma+1}}(\T))\\
            &&\times W^{1,2}([0,\infty),L^{\gamma}(\T))\cap BV_{w, \text{loc}}^{2}(0,\infty;W^{-l,2}(\T))
           \times(W_{\text{weak-*}}^{1,\infty}(0,\infty;\mathcal{M}^{+}(\T)))^2\bigg\},
      \end{eqnarray*}
      in view of property (a)  in Definition \ref{dfnMarkov}, for $\PP$ we obtain
      \[1= \PP(H_T\cap H^T) = \int_{H_T}\PP|_{\B_T}^{\tilde\omega}(H^T)\,\dd \PP(\tilde{\omega}),\]
      therefore, there is an $\PP|_{\B_T}^{\tilde{\omega}}$-nullset $N_a$  such that $\PP|_{\B_T}^{\tilde\omega}(H^T) =1$ holds for $\PP$-a.a.$\tilde\omega\in \Omega$ (i.e. the remaining $\tilde{\omega}\in \Omega$ are contained in nullset $N_a$).
      \item[(2)] Similarly, for the total energy property (b) in Definition \ref{dfnMarkov} we set
      \begin{gather*}
          \mathfrak{H}_T=\{\omega \in \Omega:\mathfrak{E}|_{[0,T]\in L_{\text{loc}}}(0,T)\},\\
          \mathfrak{H}^T=\{\omega \in \Omega:\mathfrak{E}|_{[T,\infty)\in L_{\text{loc}}}(T,\infty)\}.
      \end{gather*}
      Since the property (b) holds for $\PP$ a.s., arguing as in proof for property (a) (i.e. substituting $H_T$ and $H^T$ with $\mathfrak{H}_T$ and $\mathfrak{H}^T$, respectively) we deduce that there holds $\PP|_{\B_T}^{\tilde\omega}(H^T) =1$ for $\PP$-a.s. $\omega$. Consequently, this yields the nullset $(N_b)$.
      \item[(3)]For property (c), let $(\psi_n)_{n\in \N}$ be a dense subset of $W^{k,2}(\T)$ and fix $n\in \N$. For each $n\in \N$ we assign an $\PP$-nullset $N_c^{n}$ and set $N_c =\bigcup_{n\in \N}N_{c}^n$. To proceed, we split the continuity equation as follows:
      \begin{gather}
    \left[ \int_{\T}\xi_{t}^1 \psi_n  \right]_{t = 0}^{t = \tau} -
    \int_0^\tau \int_{\T}{ \boldsymbol{\xi}_{t}^{2} \cdot \nabla \psi_n  } \dd x\dt = 0\quad \forall 0\leq \tau\leq T,\label{ceqA}\\
    \left[ \int_{\T}\xi_{t}^1 \psi_n  \right]_{t = T}^{t = \tau} -
    \int_0^\tau \int_{\T}{ \boldsymbol{\xi}_{t}^{2} \cdot \nabla \psi_n  } \dd x\dt = 0\quad \forall T\leq \tau<\infty,\label{ceqB}
\end{gather}
and consider the sets
\begin{gather*}
    \mathfrak{A}_T=\{\omega \in \Omega:\omega|_{[0,T]}\,\text{satisfies (\ref{ceqA})} \}\\
    \mathfrak{A}^T=\{\omega \in \Omega:\omega|_{[T,\infty)}\,\text{satisfies (\ref{ceqB})} \}.
\end{gather*}
    As the property (c) holds for $\PP$, arguing similarly as in proof of (a) and (b) yields a nullset $N_c^n$.
    \item[(4)]In case of momentum equation (d), let $(\varphi_n)_{n\in \N}$ be a dense subset of $W^{k,2}(\T,\R^3)$ and fix $n\in \N$. Similarly, we assign for each $n\in \N$ an $\PP$-nullset $N_d^n$ and set $N_d =\bigcup_{n\in \N}N_{d}^n$. Noting that property (d) holds for $\PP$, then $(\mathscr M_t(\varphi_n))_{t\geq 0}$ is a $((\B_t)_{t\geq 0},\PP)$-square  integrable martingale with quadratic variation
    \[
    (\mathscr Q(\varphi_n))_{\tau} =\frac{1}{2} \int_0^\tau \sum_{k=1}^\infty \left( \int_{\T}{\xi_{t}^1\varphi e_k \cdot \varphi } \right)^2\dd t;
    \]
    As a consequence of Proposition of \ref{quad}, for $\PP$-a.a. $\tilde\omega$ we deduce that $(\mathscr M_t(\varphi_n))_{t\geq T}$ is a $((\B_t)_{t\geq T},\PP|_{\B_T}^{\tilde\omega})$-square integrable martingale with quadratic variation
    $(\mathscr Q(\varphi_n))_{t\geq T}$.
    \item[(5)] In the entropy inequality (e), the arguments coincide with those of proof of (c).
    \item[(6)] Similarly, for (f) we can argue as in the proof of (d) \textit{to obtain the nullset $N_f^n$.}\newline\\
    \textit{Choosing $N= N_a\cup\cdots\cup N_f$ completes the proof.}
  \end{itemize}
 \end{proof}
 
 \begin{lem}\label{ris}
 The family $\{\CC(y)\}_{y\in Y}$ satisfies the reconstruction property
 of Definition \ref{almostMark}.
 \end{lem}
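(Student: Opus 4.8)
The plan is to mirror the structure of the disintegration proof (Lemma \ref{dis}), but now running the logical implications in the opposite direction: given a $\B_T$-measurable family $\omega\mapsto Q_\omega$ with $\Phi_{-\tau}Q_\omega\in\CC(\omega(\tau))$ for $\PP$-a.a.\ $\omega$, we must show the reconstructed measure $\PP\otimes_\tau Q$ lies in $\CC(y)$, i.e.\ that it satisfies all of properties (a)--(f) of Definition \ref{dfnMarkov}. I would fix $y\in Y$, $\PP\in\CC(y)$, and a time $\tau>0$ (the selection procedure only needs a fixed deterministic time; the null set $\mathfrak{Z}$ in Definition \ref{almostMark} can be taken empty here because of the continuity of trajectories and the energy equality, in the same spirit as the strong-Markov improvement mentioned in the introduction). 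First I would invoke Theorem \ref{Rec} to produce $\PP\otimes_\tau Q$ together with its two defining properties: $(\PP\otimes_\tau Q)|_{\Omega^{[0,\tau]}}=\PP|_{\Omega^{[0,\tau]}}$ and $(\PP\otimes_\tau Q)|_{\B_\tau}^{\tilde\omega}=Q_{\tilde\omega}$ for $\PP$-a.a.\ $\tilde\omega$.

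The core of the argument is then a term-by-term verification. For the pathspace/regularity conditions (a) and the energy-space condition (b), I would note that membership of a trajectory in the relevant Polish/Souslin space is a Borel event that splits as an intersection of an event depending only on $\omega|_{[0,\tau]}$ and one depending only on $\omega|_{[\tau,\infty)}$; since the first has full $\PP$-measure by $\PP\in\CC(y)$ and the second has full $Q_{\tilde\omega}$-measure because $\Phi_{-\tau}Q_{\tilde\omega}\in\CC(\omega(\tau))$, the reconstruction identity from Theorem \ref{Rec}(b) gives full $(\PP\otimes_\tau Q)$-measure. For the continuity equation (c) and the entropy inequality (e), which are (countably many, by density of test functions $\psi_n\in W^{k,2}(\T)$, resp.\ $\varphi_n$ and $Z_n$) pathwise identities/inequalities, I would split each relation at time $\tau$ exactly as in (\ref{ceqA})--(\ref{ceqB}): the piece on $[0,\tau]$ holds $\PP$-a.s., the piece on $[\tau,\infty)$ holds $Q_{\tilde\omega}$-a.s.\ after applying $\Phi_{\tau}$, and the matching condition $\omega(\tau)=\tilde\omega(\tau)$ from Theorem \ref{Rec}/\ref{Dis}(a) glues the two into the full relation on $[0,\infty)$; then sum over $n$. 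For the martingale properties (d) (momentum) and (f) (energy), this is precisely where Proposition \ref{quad} is the right tool: it states the \emph{equivalence} of "being a square-integrable martingale with a given quadratic variation on $[0,\infty)$" and "being such a martingale on $[t_0,\infty)$ under the conditioned measures together with the integrability $\E^{U}[\E^{U|_{\B_{t_0}}}[\beta_t]]<\infty$". Applying the $(b)\Rightarrow(a)$ direction of Proposition \ref{quad} with $t_0=\tau$, $U=\PP\otimes_\tau Q$, $\alpha=\mathscr M(\varphi_n)$ (resp.\ $\mathscr E$), $\beta=\mathscr Q(\varphi_n)$ (resp.\ the energy quadratic variation), using that on $[0,\tau]$ the measure agrees with $\PP$ and on $[\tau,\infty)$ the conditionals are the $Q_{\tilde\omega}$, yields the martingale property under $\PP\otimes_\tau Q$.

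The main obstacle, as in all such reconstruction arguments, is the measurability bookkeeping needed to apply Proposition \ref{quad}: one must verify that $\omega\mapsto Q_\omega$ being $\B_\tau$-measurable indeed transfers to the required joint measurability/adaptedness of the processes $\mathscr M(\varphi_n)$ and $\mathscr E$ under the glued measure, and that the integrability hypothesis $\E^{\PP\otimes_\tau Q}[\E^{(\PP\otimes_\tau Q)|_{\B_\tau}}[\beta_t]]<\infty$ holds — this follows from the uniform a priori energy bounds of Proposition \ref{propE}/the energy equality, since $\beta_t$ is controlled by $c(\phi)\int_0^t\|\sqrt{\xi^1}\|_{L^1}^2\,\dd s\lesssim c(\phi,\overline\varrho)t$, but it must be recorded carefully because the defect measures enter $\mathfrak E$. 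A secondary subtlety is that the auxiliary processes $\Ss=\int_0^\cdot S\,\dd s$, $\vec R=\int_0^\cdot(\mathcal R_{\mathrm{conv}},\mathcal R_{\mathrm{press}},\mathcal V_{t,x})\,\dd s$ must be reconstructed consistently — here continuity of $t\mapsto\Ss_t,\vec R_t$ and the matching condition at $\tau$ make the two integrals concatenate correctly, so no extra work beyond the splitting-at-$\tau$ device is needed. Once (a)--(f) are all verified, Definition \ref{dfnMarkov} gives $\PP\otimes_\tau Q\in\CC(y)$, and combined with Lemma \ref{dis} the family $\{\CC(y)\}_{y\in Y}$ is almost surely pre-Markov, so Theorem \ref{Mthm} yields the Markov selection claimed in Theorem \ref{Mselection}.
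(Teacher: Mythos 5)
Your proposal follows essentially the same route as the paper's proof: invoke Theorem \ref{Rec} to build $\PP\otimes_\tau Q$, verify properties (a)--(f) of Definition \ref{dfnMarkov} by splitting each pathwise condition at the gluing time and using that the piece on $[0,\tau]$ holds $\PP$-a.s.\ while the piece on $[\tau,\infty)$ holds $Q_{\tilde\omega}$-a.s., and handle the two martingale conditions (d) and (f) via Proposition \ref{quad} together with the fact that $\PP$ and $\PP\otimes_\tau Q$ coincide on $\B(\Omega^{[0,\tau]})$. Your additional remarks on the integrability hypothesis in Proposition \ref{quad} and on the consistent concatenation of $\Ss$ and $\vec R$ are correct refinements of points the paper leaves implicit, so no substantive divergence or gap.
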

 
 \begin{proof}
 Fix $y\in Y$, $\PP\in \CC(y)$ and let $T$ be $\B_t$-stopping {time}. In view of Theorem \ref{Rec}, suppose that $Q_{\omega}$ is a family of probability measures such that
 $Q_{\omega}$ is a family of probability measures, such that
\[
\Omega \ni \omega \mapsto Q_{\omega} \in\text{ Prob$[\Omega^{[T,\infty)}]$},
\]
is $\B_T$-measurable. Then there exists a unique probability measure $\PP\otimes_{T}Q$ such that :
\begin{itemize}
    \item [(a)]For any Borel set $A \in \Omega^{[0,T]} $ we have 
    \[
    (\PP\otimes_{T}Q)(A)=\PP(A);
    \]
    \item[(b)] For $\tilde{\omega}\in \Omega$ we have $\PP$-a.s.
    \[
    (\PP\otimes_{T}Q)|_{\B_T}^{\tilde\omega}=Q_{\tilde \omega}
    \]
\end{itemize}
We aim to prove that for a $Q_{\omega}:\Omega \to  \mathrm{ Prob}[\Omega^{[T,\infty)}]$-$\B_T$-measurable map such that there is $N\in\B_T$ with $\PP(N)=0$ and for all $\omega \not\in N $ it holds
\[
\omega(T)\in Y\quad \text{and}\quad \Phi_{-T}Q_{\omega}\in \CC(\omega(T));
\]
then $(\PP\otimes_{T}Q) \in \CC(y)$. In order to do this we have to verify properties (a)-(f) in Definition \ref{dfnMarkov}. The proof follows along the lines of \cite{FlROm}, Lemma 4.5. Adopting the notation introduced in Lemma \ref{dis}, we argue as follows:\\
Here we note that $Q_{\omega}$ is a regular conditional probability distribution of  $(\PP\otimes_{T}Q)$ with respect to $\B_T$.
\begin{itemize}
    \item [(1)] Since (a) holds for $Q_{\omega}$ we have $Q_{\omega}(H^T)=1$ such that
    \[
    \PP\otimes_{T}Q(H_T\cap H^T) =\int_{H_T}Q_{\omega}[S^T]\,\dd \PP(\omega)=1.
    \]
    \item [(2)] For properties (b), (c) and (e) of Definition \ref{dfnMarkov} we argue as in property (a) (Using the notation developed for each property, respectively).
    \item[(3)] In the case of property (d),we proceed as follows:\\
    Since (d) holds for $Q_{\omega}$ we know that $(\mathscr M_t(\varphi_n))_{t\geq T}$ is a $((\B_t)_{t\geq T},Q_{\omega})$-square integrable martingale for all $\varphi \in C^1(\T)$. Consequently, by Proposition \ref{quad} we deduce that $(\mathscr M_t(\varphi_n))_{t\geq T}$ is a $((\B_t)_{t\geq T},\PP\otimes_{T}Q)$-square integrable martingale as well. Observing that $\PP$ and $\PP\otimes_{T}Q$ coincides on $\B(\Omega^{[0,T]})$ and $(\mathscr M_t(\varphi_n))_{0\leq t\leq T}$ is a $((\B_t)_{0\leq t\leq T},\PP)$-martingale (since $\PP$ satisfies property (d)) we infer that $(\mathscr M_t(\varphi_n))_{t\geq 0}$ is a $((\B_t)_{t\geq 0},\PP\otimes_{T}Q)$-martingale.
    \item[(4)] property (f) follows by the same argument as in property (d) (with obvious modifications).
\end{itemize}
 \end{proof}

\subsection*{Acknowledgement}
The author is grateful to D. Breit for insightful discussions, suggestions and corrections. The author was supported by the Maxwell Institute Graduate School in Analysis and its Applications, a Centre for Doctoral Training funded by the UK Engineering and Physical Sciences Research Council (grant EP/L016508/01), the Scottish Funding Council, Heriot-Watt University and the University of Edinburgh.

\end{document}